\documentclass[11pt]{amsart}
\usepackage{amsfonts, amsmath, amssymb, epsfig, cite, graphicx, hyperref, esint, fancyhdr, enumerate, latexsym, amsrefs}

\newtheorem{thm}{Theorem}[section]
\newtheorem{prop}[thm]{Proposition}
\newtheorem{lem}[thm]{Lemma}
\newtheorem{cor}[thm]{Corollary}

\theoremstyle{definition}

\newcommand{\ep}{\epsilon}
\newcommand{\mbb}{\mathbb}
\newcommand{\de}{\delta}
\newcommand{\ov}{\overline}

\newcommand{\pa}{\partial}
\newcommand{\mf}{\mathbb}
\newcommand{\om}{\omega}
\newcommand{\Om}{\Omega}
\newcommand{\al}{\alpha}
\newcommand{\be}{\beta}
\newcommand{\ga}{\gamma}

\newcommand{\la}{\lambda}
\newcommand{\ti}{\tilde}
\newcommand{\De}{\Delta}
\newcommand{\Ga}{\Gamma}

\renewcommand{\Re}{\operatorname{Re}}
\renewcommand{\Im}{\operatorname{Im}}

\numberwithin{equation}{section}
\textheight=8.21in
\textwidth=6.25in
\oddsidemargin=.25in
\evensidemargin=.25in
\topmargin=0in
\headheight=.1in
\headsep=.5in
\footskip=.75in

\hypersetup{
    colorlinks,
    citecolor=blue,
    filecolor=black, 
    linkcolor=red,
    urlcolor=black
}

\title{Some remarks on the Carath\'eodory and Szeg\"o metrics on planar domains}
\keywords{Carath\'eodory metric, Szeg\"o metric, Gaussian curvature}
\subjclass{Primary: 32F45; Secondary: 32A25, 30F45}
\author{Anjali Bhatnagar and Diganta Borah}

\address{Anjali Bhatnagar: Indian Institute of Science Education and Research Pune, Pune~411008, India}
\email{anjali.bhatnagar@students.iiserpune.ac.in}

\address{Diganta Borah: Indian Institute of Science Education and Research, Pune~411008, India}
\email{dborah@iiserpune.ac.in}

\begin{document}

\begin{abstract}
We study several intrinsic properties of the Carath\'eodory and Szeg\"o metrics on finitely connected planar domains. Among them are the existence of closed geodesics and geodesic spirals, boundary behaviour of Gaussian curvatures, and $L^2$-cohomology. A formula for the Szeg\"o metric in terms of the Weierstrass $\wp$-function is obtained. Variations of these metrics and their Gaussian curvatures on planar annuli are also studied. Consequently, we obtain optimal universal upper bounds for their Gaussian curvatures and show that no universal lower bounds exist for their Gaussian curvatures. Moreover, it follows that there are domains where the Gaussian curvature of the Szeg\"o metric assumes both negative and positive values. Lastly, it is also observed that there is no universal upper bound for the ratio of the Szeg\"o and Carath\'eodory metrics.
\end{abstract}
\maketitle

\section{Introduction}

The purpose of this article is to study the Carath\'eodory and Szeg\"o metrics on non-degenerate finitely connected planar domains. The Carath\'eodory metric on a domain $\Omega \subset \mf{C}$ is the function $c_{\Om}: \Omega \to [0,\infty)$ defined by
\[
c_{\Om}(z)= \sup \Big\{ \big\vert f'(z)\big\vert : f: \Om \to  \mf{D} \text{ holomorphic and } f(z)=0\Big\}.
\]
Under a conformal equivalence $\phi: \Om \to \Om'$, $c_{\Om}(z)$ transforms by the rule
\[
c_{\Om}(z)=c_{\Om'}\big(\phi(z)\big)\big\vert \phi'(z)\big\vert.
\]
Suita \cite{Suita-73-1} showed that if $\Om$ admits a nonconstant bounded holomorphic function then $c_{\Om}$ is a positive real analytic function and thus,
\[
ds_{c_{\Om}}=c_{\Om}(z) \vert dz\vert,
\]
is a conformal metric on $\Om$. The Carath\'eodory metric is analogously defined in higher dimensions. It is invariant under biholomorphisms but not smooth, in fact not even continuous, in general.

The Szeg\"o metric on a $C^{\infty}$-smoothly bounded domain $\Om \subset \mf{C}$ is defined as follows. Denote by $b\Om$ the boundary of $\Om$ and let $L^2(b \Om)$ be the Hilbert space of complex-valued functions on $b \Om$ that are square integrable with respect to the arc length measure $ds$ on $b\Om$. The Hardy space $H^2(b\Om)$ is the closure in $L^2(b\Om)$ of the set of boundary traces of holomorphic functions on $\Om$ which are smooth on $\ov \Om$. Every element in $H^2(b\Om)$ has a (necessarily unique) holomorphic extension to $\Om$ given by its Poisson integral and for brevity we will identify the elements of $H^2(b\Om)$ with their holomorphic extensions. The space $H^2(b\Om)$ is a reproducing kernel Hilbert space and the associated reproducing kernel is called the Szeg\"o kernel for $\Om$. It is denoted by $S_{\Om}(z,\zeta)$ and is uniquely determined by the following properties: (i) for each $\zeta \in \Om$, $S_{\Om}(\cdot,\zeta) \in H^2(b\Om)$, (ii) for each $z,\zeta \in \Om$, $S_{\Om}(z,\zeta)=\ov{S_{\Om}(\zeta,z)}$, and (iii) for each $f \in H^2(b\Om)$ and $z \in \Om$,
\[
f(z)=\int_{b \Om} S_{\Om}(z, \zeta) f(\zeta) \, ds.
\]
In terms of an orthonormal basis $(\phi_j)_{j\geq 1}$ of $H^2(b \Om)$, one has
\[
S_{\Om}(z,\zeta)=\sum_{j=1}^{\infty} \phi_j(z) \ov {\phi_j(\zeta)}, \quad z, \zeta \in \Om,
\]
where the convergence is uniform on compact subsets of $\Om \times \Om$. We recall from \cite{bl16} that under a conformal equivalence $\phi: \Om \to \Om'$ between two bounded domains with $C^{\infty}$-smooth boundaries, the Szeg\"o kernel transforms according to the rule
\begin{equation}\label{trans}
S_{\Om}(z,\zeta)=\sqrt{\phi'(z)}S_{\Om'}\big(\phi(z),\phi(\zeta)\big)\ov{\sqrt{\phi'(\zeta)}}.
\end{equation}
The function $\phi'$ has a single-valued square root on $\Om$ and thus $\sqrt{\phi'(z)}$ is well-defined. The restriction of the Szeg\"o kernel to the diagonal, $S_{\Om}(z)=S_{\Om}(z,z)$, is a positive real analytic function on $\Om$. By a classical result of Garabedian, the Carath\'eodory metric and the Szeg\"o kernel on a $C^{\infty}$-smoothly bounded domain $\Om\subset \mf{C}$ are related by the identity $c_{\Om}(z)=2\pi S_{\Om}(z)$ (see for example \cite[p. 118]{bg70}), using which the real analyticity of the Carath\'eodory metric was established in \cite{Suita-73-1}.  The function $\log S_{\Om}(z)$ is strictly subharmonic. It follows from \eqref{trans} that
\[
ds_{s_{\Om}}=s_{\Om}(z) \vert dz\vert,
\]
where
\[
s_{\Omega}(z)= \sqrt{\frac{\pa ^2  \log S_{\Om}(z)}{\pa z \pa \ov z}},
\]
is a conformal metric on $\Om$ which is called the Szeg\"o metric on $\Om$.

The Szeg\"o kernel and the Szeg\"o metric are defined on any finitely connected planar domain $\Om$ (not necessarily $C^{\infty}$-smoothly bounded) such that no boundary component of $\Om$ is a singleton (see for example \cite{Bell-2002}). Recall that such a domain is called a non-degenerate finitely connected domain. It is well-known that there is a conformal equivalence $\phi: \Om \to \Om'$ where $\Om' \subset \mf{C}$ is a bounded domain with real analytic boundary. Also, the function $\phi'$ has a single-valued square root on $\Om$. It is thus customary to define the Szeg\"o kernel $S_{\Om}(z,w)$ with the aid of the transformation formula \eqref{trans}. Then the Szeg\"o metric $s_{\Om}(z)\vert dz\vert$ is defined analogously.

The Szeg\"o kernel and the Szeg\"o metric are defined on $C^{\infty}$-smoothly bounded domains in higher dimensions using the Euclidean surface area measure. However, in dimension $n \geq 2$, the Euclidean surface area measure does not transform appropriately under a biholomorphic mapping and as a result, the Szeg\"o kernel does not obey a transformation rule. To resolve this issue, Fefferman constructed the so-called Fefferman surface area measure (page 259 of \cite{f79}) using which Barret and Lee \cite{bl14} introduced an invariant Ka\"hler metric---which is called the Fefferman-Szeg\"o metric---and studied various properties of this metric on strongly pseudoconvex domains. This metric was further investigated by Krantz in \cite{k19} and \cite{k21}. We note that in dimension $n=1$, the Fefferman surface area measure reduces to the arc length measure and hence, the Fefferman-Szeg\"o metric coincides with the Szeg\"o metric.

Biholomorphic invariants, and in particular, their boundary behaviour play an important role in understanding the geometry of domains. The classical invariant metrics such as the Kobayashi, Carath\'eodory, and Bergman metrics have been extensively studied, both in one and higher dimensions, and they have found far-reaching applications in several areas of mathematics. In this article, we study various intrinsic properties of the Carath\'eodory and Szeg\"o metrics such as geodesics, curvature, $L^2$-cohomology etc. and show that these metrics resemble the Bergman metric on planar domains. We use the scaling method to compute the boundary asymptotics of the Carath\'eodory and Szeg\"o metrics and utilise them to investigate the above-mentioned properties. We will also study variations of these metrics on planar annuli and answer some questions related to their comparability and universal bounds for their Gaussian curvatures.

\subsection{Geodesics}
The geodesics in the Bergman metric escaping towards the boundary plays a crucial role in Fefferman's proof of the smooth extension up to the boundary of biholomorphic mappings between $C^{\infty}$-smoothly bounded strongly pseudoconvex domains \cite{f74}. It naturally leads to the following question: Does there exist a geodesic for the Bergman metric that stays within a compact subset of a $C^{\infty}$-smoothly bounded strongly pseudoconvex domain? Such a geodesic (if it exists) can be closed or non-closed. The latter one is known as a \emph{geodesic spiral}. Herbort investigated this question in \cite{hg83} and provided an affirmative answer for domains that are not topologically trivial. On the other hand, no non-trivial closed geodesics or geodesic spirals exist in a simply connected planar domain that is not all of $\mf{C}$ as such a domain is holomorphically equivalent to the unit disc and the Bergman metric on the unit disc coincides with the Poincar\'e metric. Results of this kind were obtained for the capacity metric in \cites{bhv18} and for the Kobayashi-fuks metric in \cites{bk22, k23}. We prove that the analogous results hold for the Carath\'eodory and Szeg\"o metrics:

\begin{thm}\label{geo}
Let $\Omega \subset \mf{C}$ be a non-degenerate $n$-connected domain, $n\geq 2$. Equip $\Om$ with the conformal metric $ds_{m_{\Om}}=m_{\Omega}(z)\vert dz\vert$, where $m_{\Omega}=c_{\Omega}$ or $s_{\Omega}$. Then
\begin{enumerate}
\item[(i)] Every non-trivial homotopy class of loops in $\Omega$ contains a closed geodesic.

\item[(ii)] For every $z_0 \in \Omega$ that does not lie on a closed geodesic, there exists a geodesic spiral passing through $z_0$.
\end{enumerate}
\end{thm}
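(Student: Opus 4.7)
The plan is to prove both statements by the variational method for the length functional of $m_\Omega$ (where $m=c$ or $s$), adapting the template used for the capacity metric in \cite{bhv18} and for the Kobayashi--Fuks metric in \cite{bk22}, \cite{k23}. Two inputs drive the argument: $m_\Omega$ is a smooth positive conformal Riemannian metric, and its induced distance $d_{m_\Omega}$ is complete, so closed metric balls are compact by Hopf--Rinow. Completeness should rest on the boundary blow-up $m_\Omega(z)\to\infty$ as $z\to b\Omega$, which I anticipate to follow from the scaling-method asymptotics established elsewhere in the paper. Since $\Omega$ is $n$-connected with $n \geq 2$, the group $\pi_1(\Omega)$ is the non-trivial free group $F_{n-1}$.

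For (i), fix a non-trivial free homotopy class $[\gamma_0]$ and set $\ell = \inf\{L_{m_\Omega}(\gamma) : \gamma \in [\gamma_0]\}$. I first observe that $\ell>0$: a loop of sufficiently small $m_\Omega$-length lies inside a simply connected $d_{m_\Omega}$-ball and is therefore null-homotopic. Taking a constant-speed minimizing sequence $\gamma_k\in[\gamma_0]$ and translating basepoints to a fixed point, each $\gamma_k$ is contained in a common compact set --- the closed $d_{m_\Omega}$-ball of radius $\ell+1$. Arzel\`a--Ascoli extracts a uniform limit $\gamma_\infty\in[\gamma_0]$ of length $\ell$, and first variation of arc length makes $\gamma_\infty$ a smooth closed geodesic.

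For (ii), suppose $z_0 \in \Omega$ lies on no closed geodesic. Pick pairwise distinct non-trivial classes $[\beta_k]\in\pi_1(\Omega,z_0)$; the argument of (i) applied to based loops at $z_0$ produces length-minimizing loops $\alpha_k\in[\beta_k]$ that are geodesic away from $z_0$, with a possible corner at $z_0$. Let $v_k\in T_{z_0}\Omega$ be the outgoing unit tangent of $\alpha_k$ and extract a subsequence $v_k\to v_\infty$ by compactness of the unit circle. Define $\gamma$ as the maximal geodesic with initial data $(z_0,v_\infty)$, extended to all of $\mathbb{R}$ by completeness. Continuous dependence of the geodesic flow gives $\alpha_k|_{[0,T]}\to\gamma|_{[0,T]}$ in $C^1$ for every $T$, so $\gamma|_{[0,T]}$ inherits length-minimality between its endpoints; in particular $\gamma$ passes through $z_0$, and the hypothesis on $z_0$ rules out $\gamma$ being closed.

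The main obstacle is to show that the image of $\gamma$ is relatively compact in $\Omega$, rather than escaping to $b\Omega$. The approach is to choose the classes $[\beta_k]$ as nested words in a fixed free generating set $\{g_1,\dots,g_{n-1}\}$ of $\pi_1(\Omega,z_0)$, each $g_j$ carrying a closed geodesic $\sigma_j$ provided by (i). A pigeonhole argument on the leading syllables of $[\beta_k]$, combined with the minimality of each $\alpha_k$, forces each $\alpha_k$ to trace concatenations of the $\sigma_j$ up to bounded detours; this confines $\alpha_k|_{[0,T]}$, and hence $\gamma|_{[0,T]}$, to a fixed compact set depending only on $T$ and on the $L_{m_\Omega}(\sigma_j)$. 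The resulting non-closed relatively compact geodesic $\gamma$ is the required spiral through $z_0$.
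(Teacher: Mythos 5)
Both halves of your argument founder on the same point: nothing you write prevents curves from escaping to $b\Omega$, and completeness of $d_{m_\Omega}$ alone cannot do this. In (i), a minimizing sequence $(\gamma_k)$ for a \emph{free} homotopy class need not pass through, or even near, any fixed point; ``translating basepoints to a fixed point'' is not available for free loops (conjugating by a path to a fixed basepoint changes lengths by an unbounded amount), so the inclusion of all $\gamma_k$ in one closed ball of radius $\ell+1$ is unjustified. Moreover, completeness plus $\ell>0$ genuinely does not force minimizers into a compact set: on the cylinder $S^1\times\mathbb{R}$ with the complete metric $f(t)^2\,d\theta^2+dt^2$, $f>1$, $f(t)\to 1$, the infimum $2\pi$ in the generating class is positive but is approached only by loops escaping to infinity. (Your claim that sufficiently short loops are null-homotopic has the same defect: there is no uniform lower bound on the injectivity radius near $b\Omega$ without further input.) What rules this out here is the quantitative blow-up of $m_\Omega$ \emph{relative to the Euclidean metric}: the paper verifies, via the scaling asymptotics $m_\Omega(z)\gtrsim \vert\psi(z)\vert^{-1}$, Herbort's Property (B) and then invokes \cite[Theorem 1.1]{hg83} wholesale (that theorem also uses that no boundary component is a point, so nontrivial loops near $b\Omega$ have Euclidean length bounded below). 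If you want a self-contained variational proof, this confinement step must be made explicit.

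For (ii) the gap is more serious. Granting the existence of the geodesic loops $\alpha_k$ at $z_0$ (which is fine, since based loops of length at most $L_k+1$ lie in the compact ball $\overline{B}(z_0,L_k+1)$), the entire difficulty is that these balls grow with $k$, so the limiting geodesic $\gamma$ may still escape to $b\Omega$. Your ``pigeonhole on leading syllables / bounded detours'' paragraph is precisely the theorem and is not an argument: you give no mechanism forcing a long minimizing based loop to stay uniformly close to the closed geodesics $\sigma_j$, and uniformity over all words is exactly what is at stake. The paper's mechanism is a second-order boundary estimate (its Proposition on geodesic spirals): feeding the asymptotics of $\partial m_\Omega^2/m_\Omega^2$ into the geodesic equation, one shows there is $\epsilon>0$ such that any geodesic $\sigma$ with a critical point of $\psi\circ\sigma$ at a level above $-\epsilon$ satisfies $(\psi\circ\sigma)''>0$ there; hence no geodesic loop through $z_0$ can attain an interior maximum of $\psi$ above $-\epsilon_1=\min\{\epsilon,-\psi(z_0)\}$, so all such loops lie in the fixed compact set $\{\psi\le-\epsilon_1\}$, and Herbort's Lemma 2.2 of \cite{h83} then produces the spiral. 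Some version of this boundary convexity of geodesics, or an equivalent uniform confinement of all geodesic loops through $z_0$, is indispensable and is entirely missing from your proposal.
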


\subsection{Curvature}
The Gaussian curvature of a $C^2$-smooth conformal metric $m_{\Omega}(z) \vert dz\vert$ on a domain $\Om \subset \mf{C}$ is defined by
\[
\kappa_{m_{\Om}}(z)=-\frac{\De \log m_{\Om}(z)}{m_{\Om}^2(z)}.
\]
As there are different normalisations of Gaussian curvatures across the literature, we note here for clarity that on the unit disc $\mf{D}$,
\[
c_{\mf{D}}(z)=s_{\mf{D}}(z)=\frac{1}{1-\vert z \vert^2},
\]
and $\kappa_{c_\mf{D}}(z)=\kappa_{s_\mf{D}}(z)=-4$ for all $z \in \mf{D}$. Suita \cite{Suita-73-1} showed that $\kappa_{c_\Om}$ is at most $-4$ on any domain $\Om \subset \mf{C}$ that admits a nonconstant bounded holomorphic function. On the other hand, Burbea \cite{Burbea-77} showed that if $\Om \subset \mf{C}$ is a $C^2$-smoothly bounded domain, then $\kappa_{c_\Om}(z)$ approaches $-4$ if $z$ approaches $b\Om$ nontangentially. In \cite{sv20}, it was obtained that $\kappa_{c_\Om}(z)$ approaches $-4$ without any restricted approach to the boundary. The limiting behaviour of the higher-order curvatures of the Caratheodory metric was also studied in the above article. For the Szeg\"o metric, we will see (in Section~5) that $\kappa_{s_{\Om}}$ is at most $4$ on any non-degenerate finitely connected domain $\Om \subset \mf{C}$. As for the limiting behaviour of $\kappa_{s_{\Om}}$, let us first define, following \cite{Bur-curv-77}, the $N$-th order curvature, $N \geq 1$, of the metric $ds_{m_{\Om}}=m_{\Om}(z)\vert dz \vert$ by
\[
\kappa^{(N)}_{m_{\Om}}(z)=-4\frac{\det \begin{pmatrix}\pa^{j\ov k} m_{\Om}(z)\end{pmatrix}_{j,k=0}^N}{m_{\Om}^{(N+1)^2}(z)},
\]
where $\pa^j=\pa^j/\pa z^j$ , $\pa^{\ov k}=\pa^k/\pa \ov z^k$, and $\pa^{j\ov k}=\pa^j\pa^{\ov k}$. The function $\kappa^{(N)}_{m_{\Om}}(z)$ is invariant under a conformal equivalence and $\kappa^{(1)}_{m_{\Om}}(z)=\kappa_{m_{\Om}}(z)$. We have the following boundary behaviour of the higher order Gaussian curvatures:
\begin{thm}\label{bdry curv}
Let $\Omega \subset \mf{C}$ be a non-degenerate finitely connected domain and let $m_{\Omega}=c_{\Omega}$ or $s_{\Omega}$. Then for every $p\in b\Omega$,
\[
\kappa^{(N)}_{m_{\Omega}}(z)\to -4\left(\prod_{m=1}^{N}m! \right)^2,
\] 
as $z \to p$.
\end{thm}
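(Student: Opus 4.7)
The plan is to prove the theorem via the scaling method, extending the approach used for $\kappa^{(1)}_{c_\Omega}$ in \cite{sv20}. Since $\kappa^{(N)}_{m}$ is a biholomorphic invariant, we first replace $\Omega$ by a conformally equivalent bounded domain with real-analytic boundary (possible by the uniformization statement in the introduction), so it suffices to prove the limit at each boundary point of such a domain, which we continue to call $\Omega$. Fix $p \in b\Omega$; after a rigid motion we take $p = 0$ with the inward unit normal along the positive imaginary axis. Given a sequence $w_n \to 0$ in $\Omega$, let $\pi_n \in b\Omega$ be a nearest boundary point to $w_n$, set $\delta_n = |w_n - \pi_n|$, and define affine maps $T_n(w) = e^{i\theta_n}(w - \pi_n)/\delta_n$, with $\theta_n$ chosen so that the rescaled real-analytic boundary near $T_n(\pi_n) = 0$ flattens toward the real axis. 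Then $T_n(w_n) = i$ and $T_n(\Omega)$ converges to the upper half-plane $\mathbb{H}$ in the sense of Carath\'eodory kernels.

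By the affine invariance of $\kappa^{(N)}$, we have $\kappa^{(N)}_{m_\Omega}(w_n) = \kappa^{(N)}_{m_{T_n(\Omega)}}(i)$, so the theorem reduces to (a) showing that each mixed partial $\partial^j \partial^{\bar k} m_{T_n(\Omega)}(i)$, $0 \le j,k \le N$, converges to $\partial^j \partial^{\bar k} m_\mathbb{H}(i)$, and (b) evaluating the resulting expression on $\mathbb{H}$. For (b), biholomorphic invariance together with transitivity of $\operatorname{Aut}(\mathbb{H})$ reduce the computation to $\kappa^{(N)}_{m_\mathbb{D}}(0)$, and the expansion $c_\mathbb{D}(z) = s_\mathbb{D}(z) = (1-|z|^2)^{-1} = \sum_{\ell \ge 0} z^\ell \bar z^\ell$ yields $\partial^j \partial^{\bar k} m_\mathbb{D}(0) = \delta_{jk}(j!)^2$; the resulting diagonal matrix has determinant $\prod_{m=0}^N(m!)^2 = (\prod_{m=1}^N m!)^2$, while $m_\mathbb{D}(0)^{(N+1)^2}=1$, producing the stated limit. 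Both metrics are built from the Szeg\"o kernel---$c_\Omega = 2\pi S_\Omega$ by Garabedian and $s_\Omega^2 = \partial \bar\partial \log S_\Omega$ by definition---so (a) will reduce to establishing uniform convergence on a neighborhood of $(i,i) \in \mathbb{H} \times \mathbb{H}$ of $S_{T_n(\Omega)}(z,\zeta)$ and all its partial derivatives to $S_\mathbb{H}(z,\zeta)$.

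This last step is the main obstacle. Pointwise stability of $S_\Omega(z,z)$ under Carath\'eodory kernel convergence is classical via the extremal/reproducing-kernel characterization of the Szeg\"o kernel; the proposed route to the convergence of higher derivatives is a normal families argument applied to orthonormal bases of the Hardy spaces $H^2(b\, T_n(\Omega))$, together with the uniform $L^2$-bounds enabled by the real-analyticity of $b\Omega$, in the spirit of Bell-Ligocka-type arguments. Once convergence of $S_{T_n(\Omega)}$ with all its partials is secured, the positivity of $S_\Omega$ allows Leibniz expansion of derivatives of $\log S$ to transfer the convergence to the Szeg\"o-metric case, and the determinants defining $\kappa^{(N)}_{m_{T_n(\Omega)}}(i)$ then converge to the corresponding expression for $m_\mathbb{H}$ at $i$, completing the proof.
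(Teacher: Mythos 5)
Your overall architecture coincides with the paper's: rescale by affine maps adapted to the boundary, pass to a half-plane model, and evaluate $\kappa^{(N)}$ there. Your evaluation of the limiting value is correct and in fact more explicit than the paper's (which writes out only $N=1$ and declares the general case similar): reducing to $\kappa^{(N)}_{m_{\mathbb{D}}}(0)$ and reading off $\partial^j\partial^{\bar k}m_{\mathbb{D}}(0)=\delta_{jk}(j!)^2$ from $(1-|z|^2)^{-1}=\sum_{\ell}z^\ell\bar z^\ell$ gives $-4\prod_{m=0}^{N}(m!)^2=-4\bigl(\prod_{m=1}^{N}m!\bigr)^2$. The reduction from a general non-degenerate finitely connected domain to a smoothly bounded one via biholomorphic invariance of $\kappa^{(N)}$ also matches the paper, which performs it at the end by a subsequence argument on $\phi(z_j)$.

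The gap is in the step you yourself identify as the main obstacle, and your proposed resolution of it would not go through as stated. Stability of the Szeg\"o kernel under the scaling is \emph{not} classical via the extremal characterization $S_{\Omega}(z)=\sup\{|f(z)|^2:\|f\|_{H^2(b\Omega)}\le 1\}$: that norm is an integral over the boundary of the domain, which changes with $n$, so there is no monotonicity under inclusion and no Ramadanov-type argument as there is for the Bergman kernel; for the same reason a normal-families argument on orthonormal bases of $H^2(b\,T_n(\Omega))$ requires comparing bases across different boundary measures, and ``uniform $L^2$-bounds from real-analyticity'' is an assertion, not a proof. The paper's way around this is precisely the Garabedian identity that you mention only to relate the two metrics: since $c_{\Omega}=2\pi S_{\Omega}$ on smoothly bounded planar domains, the diagonal convergence $S_{\Omega_j}(a)\to S_{\mathcal{H}}(a)$ follows from the localization $c_{U\cap\Omega}(z)/c_{\Omega}(z)\to 1$ of the Carath\'eodory metric (an extremal quantity that \emph{is} monotone under inclusion) combined with Propositions 3.1--3.2 of \cite{sv20}; off-diagonal and derivative convergence then come for free from the bound $|S(z,\zeta)|^2\le S(z)S(\zeta)$, holomorphy in $(z,\bar\zeta)$, and Cauchy estimates. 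Without this (or an equivalent substitute) your step (a) — the only step with real analytic content — is unproven.
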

In particular, Theorem \ref{bdry curv} implies that $\kappa_{s_{\Om}}(z)$ approaches $-4$ as $z$ approaches $b\Om$. This, combined with \cite[Theorem 1.17]{gk89}, immediately gives the following:

\begin{cor}\label{iso-hol}
Let $\Om_{1}$ and $\Om_2$ be two non-degenerate finitely connected planar domains equipped with the metrics $ds_{c_{\Om_1}}$ and $ds_{c_{\Om_2}}$ (resp. $ds_{s_{\Om_1}}$ and $ds_{s_{\Om_2}}$). Then, each isometry $f: (\Om_1, ds_{c_{\Om_1}})\to (\Om_2, ds_{c_{\Om_2}})$ (resp. $f: (\Om_1, ds_{s_{\Om_1}})\to (\Om_2, ds_{s_{\Om_2}})$) is either holomorphic or conjugate holomorphic.
\end{cor}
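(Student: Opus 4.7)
The plan is to recognize this as a direct application of the preceding theorem together with the cited rigidity result of Greene--Krantz, so the proof will consist of verifying that the hypotheses of \cite[Theorem 1.17]{gk89} are met in our setting.

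First I would specialize Theorem \ref{bdry curv} to the case $N = 1$. Since $\kappa^{(1)}_{m_\Om} = \kappa_{m_\Om}$ by definition and $\prod_{m=1}^{1} m! = 1$, the statement reduces to the assertion that for $m_\Om \in \{c_\Om, s_\Om\}$ and for every boundary point $p \in b\Om$, the Gaussian curvature $\kappa_{m_\Om}(z)$ converges to $-4$ as $z \to p$ (without any nontangential or other restricted approach). This yields a common negative asymptotic curvature for both members of each pair $(\Om_1, ds_{m_{\Om_1}})$, $(\Om_2, ds_{m_{\Om_2}})$.

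Second, I would invoke \cite[Theorem 1.17]{gk89}, which is a rigidity principle of the following form: given two conformal metrics on complex manifolds whose Gaussian (more generally, holomorphic sectional) curvatures approach the same strictly negative constant at the boundary, every isometry between them must be either holomorphic or conjugate holomorphic. Since the previous step shows that our two metrics share the limiting curvature $-4$ at every boundary point, the theorem applies directly and produces the desired conclusion separately in the Carath\'eodory case and in the Szeg\"o case.

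The only step that could require genuine care is the verification that the form of the boundary curvature limit we have established --- pointwise and uniform as $z \to p$ for every $p \in b\Om$ --- matches the exact hypothesis of \cite[Theorem 1.17]{gk89}, and that any ancillary assumptions (such as completeness of the metric, or a $C^\infty$-smoothness condition on the boundary) either hold automatically for non-degenerate finitely connected planar domains or can be reduced to that case by the conformal equivalence with a bounded domain having real analytic boundary that was recalled in the introduction. No nontrivial calculation is expected beyond this bookkeeping.
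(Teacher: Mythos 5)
Your proposal matches the paper's argument exactly: the paper derives the corollary by specializing Theorem \ref{bdry curv} to $N=1$ to conclude that $\kappa_{m_{\Om}}(z)\to -4$ at the boundary and then cites \cite[Theorem 1.17]{gk89}. No discrepancy to report.
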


\subsection{$L^2$-cohomology}
Given a complete K\"ahler manifold the question of whether there are nontrivial square integrable harmonic forms is of interest because every $L^2$-cohomology class has a harmonic representative---which is an analogue of the Hodge theorem for non-compact manifolds. This question for the Bergman metric on strongly pseudoconvex domains in $\mbb C^n$ was studied by Donnelly--Fefferman \cite{df83} and Donnelly \cite{d94} and in a more general setup by McNeal \cite{m02} and Ohsawa \cite{o89} among others. Let us fix some notation to state our next result. Denote by ${\Om}^{k}_2$ the space of $k$-forms on $\Om$ which are square integrable with respect to $ds_{m_{\Om}}=m_{\Om}(z)\vert dz\vert$ where $m_{\Om}=c_{\Om}$ or $s_{\Om}$. Then the $L^2$-cohomology of the complex
\[
\Om_2^0 \xrightarrow{d_0} \Om_2^1 \xrightarrow{d_1} \Om_2^2\xrightarrow{d_2} 0,
\]
is defined by
\[
H^k_2(\Om)=\ker d_k / \ov{\text{Im}(d_{k-1})},
\]
where the closure is taken in the $L^2$-norm. Since $ds_{m_{\Om}}$ is complete, $H^k_2(\Om) \cong \mathcal{H}_2^k(\Om)$, the space of square integrable harmonic forms. We also note the decomposition
\[
\mathcal{H}^k_2(\Om)= \oplus_{p+q=k}\mathcal{H}^{p,q}_2(\Om).
\]

\begin{thm}\label{l2-co}
Let $\Om \subset \mf{C}$ be a non-degenerate finitely connected domain equipped with $m_{\Om}(z)\vert dz\vert$, where $m_{\Om}=c_{\Om}$ or $s_{\Om}$. Then
\[
\dim \mathcal{H}^{p,q}_2(\Om)= \begin{cases}
0 & \text{if $p+q \neq 1$},\\
\infty & \text{if $p+q =1$}.
\end{cases}
\]
\end{thm}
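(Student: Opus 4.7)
The plan is to analyze $\mathcal{H}^k_2(\Omega)$ one bidegree at a time. Since $\Omega$ is a Riemann surface, every conformal Hermitian metric is automatically K\"ahler, so the Hodge decomposition $\mathcal{H}^k_2=\bigoplus_{p+q=k}\mathcal{H}^{p,q}_2$ recalled just above and the K\"ahler identity $\Delta_d=2\Delta_{\bar\partial}$ will be available as soon as the metric is known to be complete. Completeness of $ds_{m_\Omega}$, together with divergence of the total volume $\int_\Omega m_\Omega^{2}\,dA$, will follow from the boundary asymptotics $m_\Omega(z)\,d(z,b\Omega)\to 1/2$ as $z\to b\Omega$ for $m_\Omega\in\{c_\Omega,s_\Omega\}$, a byproduct of the scaling analysis that also underlies Theorem~\ref{bdry curv}.

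For the extreme bidegrees the conclusion is then essentially automatic. An element of $\mathcal{H}^{0,0}_2(\Omega)$ is an $L^2$ harmonic function on a complete Riemannian manifold, hence constant by Yau's $L^p$-theorem; infinite volume forces the constant to be zero, so $\mathcal{H}^{0,0}_2(\Omega)=0$. The Hodge star provides an isometric isomorphism $\ast:\mathcal{H}^{1,1}_2(\Omega)\to\mathcal{H}^{0,0}_2(\Omega)$, giving $\mathcal{H}^{1,1}_2(\Omega)=0$ as well.

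The substantive case is $p+q=1$, and here the plan is to identify $\mathcal{H}^{1,0}_2(\Omega)$ with the classical Bergman space $A^{2}(\Omega)$. Because $\bar\partial^{\ast}$ annihilates $(1,0)$-forms for bidegree reasons, a $(1,0)$-form is $\bar\partial$-harmonic precisely when it is $\bar\partial$-closed; locally $\omega=f\,dz$, so this is the condition that $f$ be holomorphic. A direct calculation in the conformal metric $m_\Omega\,|dz|$ shows that the volume weight $m_\Omega^{2}$ cancels the cometric weight $m_\Omega^{-2}$ on $dz$, yielding
\[
\|f\,dz\|_{L^{2}}^{2}=C\int_\Omega|f|^{2}\,dA
\]
for an absolute constant $C>0$. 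Thus $\mathcal{H}^{1,0}_2(\Omega)\cong A^{2}(\Omega)$ as complex vector spaces. Since $\Omega$ is conformally equivalent to a bounded planar domain, on which $A^{2}$ is patently infinite-dimensional (polynomials furnish independent elements), and since the correspondence $f\,dz\leftrightarrow (f\circ\phi^{-1})(\phi^{-1})'\,dw$ preserves $L^{2}$-norms of holomorphic $1$-forms under a biholomorphism $\phi$, one obtains $\dim\mathcal{H}^{1,0}_2(\Omega)=\infty$. Complex conjugation provides an antilinear isomorphism $\mathcal{H}^{1,0}_2(\Omega)\to\mathcal{H}^{0,1}_2(\Omega)$, and the same conclusion passes to $\mathcal{H}^{0,1}_2$, completing $p+q=1$.

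The main step requiring care is the bookkeeping behind the cancellation of conformal weights in the $L^{2}$-norm and the correct use of the K\"ahler identities and Hodge star in the $m_\Omega$-metric; everything else is a packaging of standard facts (Yau's $L^{2}$-theorem, Hodge duality on complete oriented manifolds, and the infinite-dimensionality of the Bergman space on bounded planar domains). I would therefore expect the rigorous write-up to be short once the completeness and infinite-volume assertions have been cleanly extracted from the boundary asymptotics.
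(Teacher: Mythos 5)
Your argument is correct. For $p+q\neq 1$ it coincides with the paper's: completeness plus Yau's theorem forces an $L^2$ harmonic function to be constant, and the infinite volume (which indeed follows from the boundary asymptotics of Proposition~\ref{asymp}, i.e.\ $m_\Om \gtrsim (-\psi)^{-1}$) kills the constant; the $(1,1)$ case is reduced to $(0,0)$ by the Hodge star exactly as the paper reduces $\mathcal{H}^2_2$ to $\mathcal{H}^0_2$. For $p+q=1$ you genuinely diverge from the paper. The paper invokes Ohsawa's theorem on the infinite dimensionality of middle $L^2$-cohomology, which requires verifying the uniform quasi-isometry $ds_{m_\Om}^2 \approx (-\psi)^{-1}\vert dz\vert^2+(-\psi)^{-2}\vert\pa\psi\vert^2\vert dz\vert^2$ near $b\Om$ — again a consequence of Proposition~\ref{asymp}. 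You instead use the fact, special to complex dimension one, that the $L^2$-norm on $1$-forms is conformally invariant: the volume weight $m_\Om^2$ cancels the cometric weight $m_\Om^{-2}$, so (using completeness to identify $L^2$ harmonic forms with $\ker d\cap\ker d^*$, and the K\"ahler bidegree splitting) $\mathcal{H}^{1,0}_2(\Om)$ is exactly the space of square-integrable holomorphic $1$-forms on $\Om$, which is conformally invariant and hence infinite dimensional since $\Om$ is conformally a bounded domain; conjugation handles $\mathcal{H}^{0,1}_2$. This is more elementary and more self-contained (it needs only completeness, not the precise two-term asymptotics, and it identifies the harmonic space concretely as a Bergman space), at the cost of being strictly one-dimensional, whereas the paper's route via Ohsawa is the one that generalises to higher-dimensional settings. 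One small imprecision: the K\"ahler identity $\Delta_d=2\Delta_{\bar\partial}$ is a pointwise statement needing no completeness; completeness is what you need for Gaffney's identification of $L^2$ harmonic forms with closed-and-coclosed forms and for the bidegree decomposition of $\mathcal{H}^1_2$. This does not affect the proof.
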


\subsection{The Szeg\"o metric on a doubly connected domain}
In \cite{Zara34}, Zarankiewicz derived a formula for the Bergman kernel on an annulus in terms of the Weierstrass $\wp$-function. Using this, a similar formula for the Szeg\"o kernel on an annulus can be obtained (see for example \cite{Burbea-77}). Using them, formulas for the Bergman and capacity metrics in terms of the $\wp$-function can be derived and were useful in studying the qualitative behaviour of geodesics and curvatures of these metrics on an annulus (see \cite{h83}, \cite{hg83}, and \cite{a05}). We show that the Szeg\"o metric on an annulus has the following expression:

\begin{prop}\label{sgo ann}
Let $r \in (0,1)$ and $A_r=\{z \in \mf{C}: r<\vert z \vert<1\}$. Then the Szeg\"o metric on $A_{r}$ is given by
\[
ds_{s_{A_{r}}}^{2}=\frac{\wp\big(2\log \vert z \vert\big)-\wp\big(2\log \vert z \vert+\om_1+\om_3\big) }{\vert z \vert^2}|dz|^{2},
\]
where $\wp$ is the Weierstrass elliptic $\wp$ function with periods $2\om_1=-2\log r$ and $2\om_3=2i\pi$.
\end{prop}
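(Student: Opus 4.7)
The plan is to reduce the computation of $s_{A_r}^2$ to a one-variable problem using the rotational symmetry of $A_r$, and then to identify the result with the stated difference of $\wp$-values by passing through a theta-function expression for the Szeg\"o kernel and the classical identity $(\log\sigma)''(u) = -\wp(u)$ relating the Weierstrass $\sigma$- and $\wp$-functions.

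First I would compute $S_{A_r}(z,z)$ in the orthogonal basis $\{z^n\}_{n\in\mathbb{Z}}$ of $H^2(bA_r)$, which has squared norms $\|z^n\|^2 = 2\pi(1+r^{2n+1})$. This yields
\[
S_{A_r}(z,z) = \frac{1}{2\pi}\sum_{n\in\mathbb{Z}}\frac{|z|^{2n}}{1+r^{2n+1}}.
\]
The right-hand side depends only on $|z|$, so writing $t = 2\log|z|$ and $\psi(t) = \log S_{A_r}(z,z)$, the chain rule gives
\[
\frac{\partial^2 \log S_{A_r}(z,z)}{\partial z\,\partial\bar z} = \frac{\psi''(t)}{|z|^2}.
\]
Thus the proposition reduces to the one-variable identity $\psi''(t) = \wp(t) - \wp(t+\omega_1+\omega_3)$.

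Next, integrating this target identity twice using $\zeta = (\log\sigma)'$ and $-\zeta' = \wp$ gives the equivalent statement
\[
\frac{1}{2\pi}\sum_{n\in\mathbb{Z}}\frac{e^{nt}}{1+r^{2n+1}} = C\,e^{c_1 t}\,\frac{\sigma(t+\omega_1+\omega_3)}{\sigma(t)}
\]
for some constants $C$ and $c_1$. I would verify this using the Weierstrass representation
\[
\sigma(u) = \frac{2\omega_1}{\pi}\exp\!\left(\frac{\eta_1 u^2}{2\omega_1}\right)\frac{\theta_1\!\left(\pi u/(2\omega_1)\,\big|\,\tau\right)}{\theta_1'(0|\tau)}, \qquad \tau = \omega_3/\omega_1,
\]
under which the shift $u \mapsto u+\omega_1+\omega_3$ sends $\theta_1$ to $\theta_3$ in the variable $v = \pi u/(2\omega_1)$, modulo an exp-linear-in-$v$ factor. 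The right-hand side therefore reduces, up to an exp-affine-in-$t$ prefactor, to a $\theta_3/\theta_1$-quotient, which matches (via the classical Burbea computation \cite{Burbea-77}) the Fourier--Laurent series in $e^t$ on the left.

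The principal obstacle will be the bookkeeping: keeping track of the exponential prefactors introduced by the sigma-theta conversion, which are affine in $t$ and therefore vanish under two $t$-derivatives, and pinning down the half-period shift as exactly $\omega_1+\omega_3$. The odd exponent $2n+1$ in the denominator forces a half-period shift in the theta-function language, but verifying that this shift is the diagonal $\omega_1+\omega_3$ (rather than $\omega_1$ or $\omega_3$ alone) requires careful tracking through the $\theta_1 \leftrightarrow \theta_3$ and $\sigma \leftrightarrow \theta_1$ conversions. Once this is settled, the identity $(\log\sigma)''=-\wp$ does the rest.
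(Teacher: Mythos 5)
Your proposal is correct in outline, but it takes a genuinely different route from the paper. The paper does not touch the Laurent series of $S_{A_r}$ at all: it starts from Aboudi's identity $2\pi S_{A_r}(z)=c_{\beta}(z)\,\sigma_2^{*}(-2\log|z|)$ relating the Szeg\"o kernel to the logarithmic capacity, applies $\pa\ov\pa\log$ to both factors, quotes the Suita--Zarankiewicz formula $\pa\ov\pa\log c_{\beta}(z)=|z|^{-2}\big(\wp(2\log|z|)+c\big)$ for the first, and computes the second directly from $(\log\sigma)'=\zeta$, $\zeta'=-\wp$; the constants $c$ cancel and the formula drops out. You instead work from the series $S_{A_r}(z,z)=\frac{1}{2\pi}\sum_n|z|^{2n}/(1+r^{2n+1})$, reduce to one variable via $t=2\log|z|$ (your chain-rule step is right), and integrate the target back to the multiplicative identity $S_{A_r}=C\,e^{c_1t}\,\sigma(t+\om_1+\om_3)/\sigma(t)$, which you then propose to verify through the $\sigma$--$\theta_1$ conversion and the half-period shift $\theta_1\mapsto\theta_3$. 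That identity is exactly the content that Aboudi's equation packages for the paper, so in effect you are re-deriving the paper's imported input rather than using it. The trade-off: your route is more self-contained in spirit and makes visible where the $\om_1+\om_3$ shift comes from (the odd exponent $2n+1$), but the entire weight of your argument rests on the series-to-theta-quotient identity, which you only cite to Burbea; the surrounding steps (chain rule, double integration, the affine-in-$t$ prefactors dying under two derivatives) are routine. The paper's route is shorter because both of its nontrivial inputs are quoted wholesale. Since the cited identity is classical and does appear in the annulus literature, I see no genuine gap -- but if you wanted a fully self-contained proof you would still need to run the standard elliptic-function argument (compare quasi-periodicity factors and poles of both sides of your displayed identity, conclude the ratio is a zero-free elliptic function, hence constant), which is precisely the bookkeeping you flag as the principal obstacle.
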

We hope this formula will be useful in studying the qualitative behaviour of geodesics and curvature of the Szeg\"o metric on an annulus as in the case of the Bergman and capacity metrics.

\subsection{Variations of the Carath\'eodory and Szeg\"o metrics on planar annuli}
Recall that the Gaussian curvatures of the Carath\'eodory and the Szeg\"o metrics have the universal upper bounds $-4$ and $4$ respectively. Theorem~\ref{bdry curv} shows that for the Carath\'eodory metric, the upper bound $-4$ is optimal. It is only natural to ask if for the Szeg\"o metric the upper bound $4$ is optimal. Similarly, it is natural to ask if the Gaussian curvatures of these metrics have universal lower bounds. These questions for the Bergman metric have been studied by several authors---see \cite{hg07}, \cite{l71}, \cite{cl09}, \cite{d10}, and \cite{z10}. Among them, in \cite{d10}, Dinew studied the variation of the Gaussian curvatures of the Bergman metric on planar annuli to answer these questions which was later simplified by Zwonek in \cite{z10}. Zwonek's idea was to analyse the maximal domain functions that appear in the Bergman-Fuks formula for the curvature of the Bergman metric. Using similar ideas we study the variations of the Carath\'eodory and Szeg\"o metrics and their Gaussian curvatures on planar annuli.

\begin{thm}\label{variation}
For $r \in (0,1)$, let $A_r=\{z \in \mf{C}: r<\vert z \vert <1\}$. Then
\begin{itemize}
\item [(a)] 
\[
\lim_{r \to 0+} c_{A_r}(r^{\la})=2\pi S_{A_r}(r^{\la})=
\begin{cases}
1 & 0<\la < 1/2,\\
2 & \la=\frac{1}{2},\\
+\infty & 1/2 < \la <1.
\end{cases}
\]

\item [(b)]
\[
\lim_{r \to 0+} s_{A_r}(r^{\la})=
\begin{cases}
1 & 0<\la<\frac{1}{4}, \\
\sqrt{2} & \la=\frac{1}{4},\\
+\infty & \frac{1}{4} < \la <1.
\end{cases}
\]

\item [(c)]
\[
\lim_{r \to 0+}\kappa_{c_{A_r}}(r^{\la})= \begin{cases}
-4 & 0<\la <\frac{1}{4},\\
-8 & \la=\frac{1}{4},\\
-\infty & \frac{1}{4}< \la < \frac{3}{4},\\
-8 & \la=\frac{3}{4},\\
-4 & \frac{3}{4}<\la <1.
\end{cases}
\]
\item [(d)]
\[
\lim_{r \to 0+}\kappa_{s_{A_r}}(r^{\la})=
\begin{cases}
-4 & 0<\la < \frac{1}{6},\\
-12 & \la=\frac{1}{6},\\
-\infty & \frac{1}{6}< \la <\frac{1}{3},\\
-4 & \la=\frac{1}{3},\\
4 & \frac{1}{3}<\la < \frac{2}{3},\\
-4 & \la=\frac{2}{3},\\
-\infty & \frac{2}{3}< \la <\frac{5}{6},\\
-12 & \la=\frac{5}{6},\\
-4 & \frac{5}{6}<\la < 1.\\
\end{cases}
\]
\end{itemize}
\end{thm}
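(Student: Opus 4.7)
The plan is to work from the explicit orthonormal expansion of the Szeg\"o kernel on $A_r$ and carry out asymptotic analysis as $r\to 0^+$. The family $\phi_n(z)=z^n/\sqrt{2\pi(1+r^{2n+1})}$, $n\in\mathbb{Z}$, is an orthonormal basis of $H^2(bA_r)$ with respect to arc length, yielding
\[
S_{A_r}(z) = \frac{1}{2\pi}\sum_{n\in\mathbb{Z}}\frac{|z|^{2n}}{1+r^{2n+1}}, \qquad c_{A_r}(z) = 2\pi S_{A_r}(z),
\]
where the second identity is Garabedian's theorem. Set $u=\log|z|^2$, $a_n=(1+r^{2n+1})^{-1}$, and $\phi(u)=\log\sum_n a_n e^{nu}$; then every quantity in the theorem is a rational function of derivatives of $\phi$, and the probability distribution $p_n = a_n e^{nu}/\sum_m a_m e^{mu}$ on $\mathbb{Z}$ identifies $\phi^{(k)}(u)$ with the $k$-th cumulant of $N\sim p$ for $k\geq 1$.

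For (a), substituting $|z|=r^\lambda$, the $n$-th term of $c_{A_r}$ has order $r^{2n\lambda}$ for $n\geq 0$ and $r^{2|n|(1-\lambda)-1}$ for $n\leq -1$, so only $n=0$ and $n=-1$ ever reach order $O(1)$, and the three cases correspond to $\lambda$ less than, equal to, or greater than $1/2$. For (b), the identity $\partial_z\partial_{\bar z}F(\log|z|^2)=F''(u)/|z|^2$ yields $s_{A_r}^2 = \mathrm{Var}_p(N)/|z|^2$; when $\lambda<1/2$ the variance is dominated by the pair $p_1\sim r^{2\lambda}$ and $p_{-1}\sim r^{1-2\lambda}$, whose balance at $\lambda=1/4$ produces the transitions, and for $\lambda\ge 1/2$ divergence is immediate. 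For (c), $\log c_{A_r}$ and $\log S_{A_r}$ differ by a constant, so $\Delta\log c_{A_r}=4s_{A_r}^2$ and $\kappa_{c_{A_r}}=-4\,s_{A_r}^2/c_{A_r}^2$; the five cases follow at once from (a) and (b), with the palindromic structure reflecting the symmetry $\lambda\leftrightarrow 1-\lambda$ induced by $z\mapsto r/z$.

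For (d), applying the same $G\mapsto G''/|z|^2$ identity to $G=\log\phi''-u$ yields
\[
\kappa_{s_{A_r}}(z) = -\frac{2\bigl(\phi''(u)\phi''''(u)-\phi'''(u)^2\bigr)}{\phi''(u)^3} = -\frac{2\bigl(\mu_2\mu_4-3\mu_2^3-\mu_3^2\bigr)}{\mu_2^3},
\]
where $\mu_k = E_p[(N-E_p N)^k]$ are the central moments of $p$. Thus (d) reduces to asymptotic analysis of $\mu_2,\mu_3,\mu_4$ as $r\to 0^+$, and the nine sub-cases arise from order-coincidences of the weights $w_n=a_n r^{2n\lambda}$ for $n\in\{-2,-1,0,1,2\}$, whose exponents are $3-4\lambda,\,1-2\lambda,\,0,\,2\lambda,\,4\lambda$---giving exactly the critical values $\lambda\in\{1/6,1/3,1/2,2/3,5/6\}$.

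The main obstacle is the delicate cancellation in (d). A direct calculation shows that for a pure two-point distribution on $\{0,-1\}$ with $P(N=-1)=q$ one has $\mu_2\mu_4-\mu_3^2=\mu_2^3$, so $\mu_2\mu_4-3\mu_2^3-\mu_3^2=-2\mu_2^3$ and the formula returns the positive value $+4$. This accounts cleanly for the non-negative middle regime $\lambda\in(1/3,2/3)$. At the transitions $\lambda=1/3,2/3$ the same leading pair would give the same spurious $+4$, and the correct limit $-4$ emerges only after including the subleading weight $p_{1}$ (resp.\ $p_{-1}$) of order $r^{2/3}$, which is comparable to $q^3=r$ and is needed to witness the exact cancellation of the leading $O(q^2)$ contribution in the numerator, leaving a surviving $+2q^3$ term that flips the sign. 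The values $-12$ at $\lambda=1/6,5/6$ require an even finer balance involving $p_{\pm 2}$, and in the remaining sub-intervals the numerator grows strictly faster than $\mu_2^3$, forcing divergence to $-\infty$. Carrying the moment expansions to sufficient order to detect these cancellations is the principal technical step.
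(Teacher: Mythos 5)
Your strategy is sound and, at bottom, converges to the same computation as the paper, but it is organised quite differently. The paper routes everything through the maximal domain functions via the Bergman--Fuks-type identities $S_\Omega=J^{(0)}$, $s_\Omega^2=J^{(1)}/J^{(0)}$, $\kappa_{s_\Omega}=4-2J^{(0)}J^{(2)}/(J^{(1)})^2$, computes $J^{(0)},J^{(1)},J^{(2)}$ on $A(r,R)$ at the point $1$ by extremal (Cauchy--Schwarz) arguments in terms of the sums $s_k=\sum_n n^k/\alpha_n^{r,R}$, and then transports the problem to $A(r^{1-\lambda},r^{-\lambda})$ by the transformation rule for the $J^{(j)}$. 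You instead differentiate $\log S_{A_r}$ directly using rotational symmetry, turning everything into cumulants of the distribution $p_n\propto r^{2n\lambda}/(1+r^{2n+1})$. Your curvature formula
\[
\kappa_{s_{A_r}}=-\frac{2\bigl(\phi''\phi''''-(\phi''')^2\bigr)}{(\phi'')^3}=-\frac{2\bigl(\mu_2\mu_4-3\mu_2^3-\mu_3^2\bigr)}{\mu_2^3}
\]
is correct: it agrees identically with the paper's $4-2J^{(0)}J^{(2)}/(J^{(1)})^2$ (both equal $6-2(\mu_2\mu_4-\mu_3^2)/\mu_2^3$ in central moments), and it reproduces $-4$ for the geometric distribution of the disc. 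The two-point identity $\mu_2\mu_4-\mu_3^2=\mu_2^3$ is a genuinely cleaner explanation of the limit $+4$ on $(1/3,2/3)$ than anything in the paper. Since your $\sum_n n^k p_n$ and the paper's $s_k^{r^{1-\lambda},r^{-\lambda}}$ are the same sums up to normalisation, the remaining asymptotic analysis is identical in either formalism; yours buys conceptual transparency, the paper's buys a uniform bookkeeping of error terms (its relation $\phi\sim\psi$ and Lemma 7.4).

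There is, however, a real gap in part (d), which you acknowledge: the decisive cancellation analysis is described but not performed, and it is precisely where all the work in the paper's proof lies (the expansions of $N^{(0)}_\lambda,N^{(1)}_\lambda,N^{(2)}_\lambda$ with errors that are $o(r^{\epsilon})$ relative to the main terms). Two of your guiding heuristics are also off as stated. First, at $\lambda=1/3$ the weight $p_1\sim r^{2\lambda}=r^{2/3}$ is \emph{not} comparable to $q^3=r$; the correct comparison is $p_1\sim q^2$, equivalently the cross term $q\,p_1$ entering $\mu_2\mu_4-\mu_3^2$ is comparable to the $q^3$ produced by the two-point identity. A three-point computation gives $\kappa_{s_{A_r}}\approx 4-8\,p_1/q^2$ for $1/4<\lambda<1/2$, where $p_1/q^2=r^{6\lambda-2}$, which indeed yields $-\infty$, $-4$, $+4$ according as $\lambda<1/3$, $=1/3$, $>1/3$. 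Second, the critical set is not produced by order-coincidences of the weight exponents $\{3-4\lambda,1-2\lambda,0,2\lambda,4\lambda\}$ alone: no two of these coincide at $\lambda=1/3$ or $2/3$, and the coincidence at $\lambda=1/2$ is not a transition point of (d). The relevant coincidences occur between \emph{products} of weights inside $\mu_2\mu_4-3\mu_2^3-\mu_3^2$, as your own discussion of $\lambda=1/3$ implicitly shows. None of this invalidates the approach, and the values $-12$ at $\lambda=1/6,5/6$ are plausibly within reach of the same expansion carried one order further (now involving $p_{\pm2}$), but until those expansions are written down with error control sufficient to justify the $-\infty$ regimes, the proof of (d) is incomplete.
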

Some remarks are in order. First, as $r$ decreases to $0$, the annuli $A_r$ exhaust the punctured unit disc $\mf{D}^{*}=\mf{D} \setminus \{0\}$. Therefore, $c_{A_r}$ converges uniformly on compact subsets of $\mf{D}^{*}$ to $c_{\mf{D}^{*}}=c_\mf{D}\vert_{\mf{D}^{*}}$ as $r \to 0+$. It follows that $\kappa_{c_{A_r}}$ converges uniformly on compact subsets of $\mf{D}^{*}$ to $-4$ as $r \to 0+$. Since $c_{A_r}(z)=2\pi S_{A_r}(z)$, it also follows that $S_{A_r}(z)$ converges uniformly on compact subsets of $\mf{D}^{*}$ to $S_{\mf{D}}\vert_{\mf{D}^{*}}(z)$ as $r \to 0+$. Accordingly, $s_{A_r}$ and $\kappa_{s_{A_r}}$ converge uniformly on compact subsets of $\mf{D}^{*}$ to $s_{\mf{D}}\vert_{\mf{D}^{*}}$ and $-4$ respectively as $r \to 0+$. This does not contradict Theorem~\ref{variation} as in this theorem, the points where the limits are studied, \textbf{do not} lie on a fixed compact subset of $\mf{D}^{*}$.

Second, using Theorem~\ref{variation}, we can answer the questions related to the universal bounds of the Gaussian curvatures of the Carath\'eodory and Szeg\"o metrics. Observe from (d) that $\kappa_{s_{A_r}}(r^{4/9}) \to 4$ as $r \to 0+$. This establishes the following:

\begin{thm}
Given $\epsilon>0$, there exists a $C^{\infty}$-smoothly bounded domain $\Omega_{\epsilon} \subset \mf{C}$ and a point $z\in \Omega_{\epsilon}$ such that $\kappa_{s_{\Om_\ep}}(z)>4-\ep$.
\end{thm}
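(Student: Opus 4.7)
The plan is to derive this statement as an immediate corollary of Theorem~\ref{variation}(d). Inspecting the piecewise description of $\lim_{r\to 0+}\kappa_{s_{A_r}}(r^{\la})$ given there, we see that for every $\la$ in the open interval $(1/3, 2/3)$ the limit equals $+4$. I therefore fix one such exponent, say $\la = 1/2$; any choice in the middle open stratum would work equally well, including the value $\la = 4/9$ flagged in the paragraph preceding the theorem.

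By the definition of a limit, given any $\ep > 0$ there exists $r_0 = r_0(\ep) \in (0,1)$ such that for every $r \in (0, r_0)$,
\[
\kappa_{s_{A_r}}(r^{\la}) > 4 - \ep.
\]
I then set $\Omega_{\ep} = A_r$ for a single fixed $r \in (0, r_0)$ and let $z = r^{\la}$. Since $0 < \la < 1$, we have $r = r^{1} < r^{\la} < r^{0} = 1$, so $z \in A_{r}$. Moreover, $A_{r}$ is bounded by two concentric circles and is therefore $C^{\infty}$-smoothly (in fact real-analytically) bounded, so $s_{A_r}$ and its Gaussian curvature at $z$ are defined in the classical sense that the statement requires. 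This produces the desired domain and point.

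The main obstacle is not in this deduction, which is essentially cost-free once Theorem~\ref{variation}(d) is in hand. The real work lies in establishing that part of Theorem~\ref{variation}, namely in using the $\wp$-function expression of Proposition~\ref{sgo ann} together with a careful asymptotic analysis of $\wp$ and its derivatives along the scaling curves $z = r^{\la}$ as $r \to 0+$, so as to track where the quantities $\log s_{A_r}(z)$ and $\De \log s_{A_r}(z)$ concentrate relative to $s_{A_r}^{2}(z)$ and to identify the intermediate stratum $1/3 < \la < 2/3$ on which the curvature tends to the positive value $+4$. Once that regime is isolated, the corollary above follows at once.
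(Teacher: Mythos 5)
Your deduction is correct and is essentially the paper's own argument: the authors likewise obtain this theorem as an immediate consequence of Theorem~\ref{variation}(d) by observing that $\kappa_{s_{A_r}}(r^{\la})\to 4$ for an exponent $\la$ in the middle stratum (they take $\la=4/9$, you take $\la=1/2$; either works since both lie in $(1/3,2/3)$). Your closing speculation that Theorem~\ref{variation}(d) itself would be proved via the $\wp$-function formula of Proposition~\ref{sgo ann} is not how the paper proceeds---it instead uses the maximal domain functions $J^{(j)}_{A_r}$ and the Bergman--Fuks type formula---but that concerns the input result rather than the statement at hand, so it does not affect the validity of your proof.
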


Also, from (c) and (d), $\kappa_{c_{A_r}}(r^{1/2}) \to -\infty$ and $\kappa_{s_{A_r}}(r^{1/4}) \to -\infty$ as $r \to 0+$. Thus we obtain

\begin{thm}
There are no universal lower bounds for the Gaussian curvatures of the Carath\'eodory and Szeg\"o metrics on
the class of $C^{\infty}$-smoothly bounded planar domains.
\end{thm}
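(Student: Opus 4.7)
The plan is to simply invoke the variation theorem (Theorem~\ref{variation}) at suitably chosen values of $\la$, as the heavy lifting has already been done there. The key observation is that each annulus $A_r = \{z \in \mf{C} : r < \vert z \vert < 1\}$ is a $C^{\infty}$-smoothly (in fact real analytically) bounded planar domain, so the family $\{A_r\}_{r \in (0,1)}$ lies in the class over which we are claiming no uniform lower bound on Gaussian curvature.

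For the Carath\'eodory metric, I would use part (c) of Theorem~\ref{variation} with the choice $\la = 1/2$, which yields
\[
\lim_{r \to 0+} \kappa_{c_{A_r}}(r^{1/2}) = -\infty.
\]
Consequently, for every $M > 0$ there exists $r \in (0,1)$ and a point $z_r = r^{1/2} \in A_r$ with $\kappa_{c_{A_r}}(z_r) < -M$. This rules out any universal lower bound for $\kappa_{c_{\Om}}$ on $C^{\infty}$-smoothly bounded planar domains.

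For the Szeg\"o metric, I would argue identically using part (d) with $\la = 1/4$, obtaining
\[
\lim_{r \to 0+} \kappa_{s_{A_r}}(r^{1/4}) = -\infty,
\]
so that $\kappa_{s_{A_r}}(r^{1/4}) < -M$ for all sufficiently small $r$. Since there is no genuine obstacle here beyond quoting Theorem~\ref{variation}, the only thing to double-check is that the sequence of evaluation points $r^{1/2}$ (respectively $r^{1/4}$) does lie inside $A_r$ for every small $r$, which is immediate as $r < r^{1/2} < 1$ and $r < r^{1/4} < 1$ for $r \in (0,1)$. Thus the proof will essentially be two lines, with the real content residing in Theorem~\ref{variation} itself.
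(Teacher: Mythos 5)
Your proposal is correct and is precisely the paper's own argument: the authors deduce the theorem from Theorem~\ref{variation} by observing that $\kappa_{c_{A_r}}(r^{1/2})\to-\infty$ (part (c)) and $\kappa_{s_{A_r}}(r^{1/4})\to-\infty$ (part (d)) as $r\to 0+$. Your extra check that $r^{1/2}, r^{1/4}\in A_r$ is a harmless addition; there is nothing to change.
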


Third, Theorem~\ref{variation} also shows that there are domains in $\mf{C}$ such that the Gaussian curvature of the Szeg\"o metric assume both negative and positive values:

\begin{thm}
There exists a $C^{\infty}$-smoothly bounded domain $\Omega\subset \mf{C}$ and $z, w\in \Omega$, such that $\kappa_{s_{\Om}}(z)>0$ and $\kappa_{s_\Omega}(w)<0$.
\end{thm}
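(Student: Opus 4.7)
The plan is to exhibit the required domain by taking $\Omega$ to be an annulus $A_r = \{r < \vert z \vert < 1\}$ for a sufficiently small $r \in (0,1)$ and reading off the two points $z,w$ directly from Theorem~\ref{variation}(d). Before doing so, I would note that the statement is really only meaningful in the Szeg\"o case: by Suita's bound \cite{Suita-73-1} recalled in the introduction, $\kappa_{c_\Om} \le -4 < 0$ on every planar domain admitting a nonconstant bounded holomorphic function, and this is also consistent with the fact that all nine values appearing in Theorem~\ref{variation}(c) are strictly negative. Consequently, positive Carath\'eodory curvature is unattainable, and the content of the theorem lies entirely with $m_\Om = s_\Om$.

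For the Szeg\"o metric, the analysis of $\kappa_{s_{A_r}}(r^{\la})$ in Theorem~\ref{variation}(d) already furnishes parameters of the correct signs. Indeed, for any $\la_+ \in (1/3, 2/3)$ one has $\lim_{r\to 0+} \kappa_{s_{A_r}}(r^{\la_+}) = 4$, while for any $\la_- \in (0, 1/6)$ one has $\lim_{r\to 0+} \kappa_{s_{A_r}}(r^{\la_-}) = -4$. Fixing for concreteness $\la_+ = 1/2$ and $\la_- = 1/10$, I would choose $r_0 > 0$ small enough so that for every $r \in (0, r_0)$ we have simultaneously $\kappa_{s_{A_r}}(r^{1/2}) > 2$ and $\kappa_{s_{A_r}}(r^{1/10}) < -2$. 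Since $r < r^{1/2} < r^{1/10} < 1$ whenever $r \in (0,1)$, both points lie in $A_r$, and the annulus $\Omega := A_r$ is a $C^\infty$-smoothly bounded planar domain. Setting $z := r^{1/2}$ and $w := r^{1/10}$ then yields the desired configuration with $\kappa_{s_\Om}(z) > 0$ and $\kappa_{s_\Om}(w) < 0$.

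There is no substantive obstacle beyond invoking Theorem~\ref{variation}: once the two regimes $(1/3, 2/3)$ and $(0, 1/6)$ have been identified there, the present statement is immediate. The entire analytic difficulty, namely the explicit asymptotic behaviour of the Szeg\"o curvature on degenerating annuli, has been absorbed into the proof of that earlier theorem, and the only remark to make here concerns why the Carath\'eodory alternative is vacuous.
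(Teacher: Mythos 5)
Your proposal is correct and follows the same route as the paper: the theorem is stated as an immediate consequence of Theorem~\ref{variation}(d), with the positive value coming from the regime $\la\in(1/3,2/3)$ and the negative value from any of the other regimes, exactly as you do with $\la_+=1/2$ and $\la_-=1/10$ on a single sufficiently thin annulus. Your side remark about the Carath\'eodory case is also well taken and worth recording: since $\kappa_{c_\Om}\le -4$ by Suita's theorem and every limit in Theorem~\ref{variation}(c) is negative, the conclusion $\kappa_{c_\Om}(z)>0$ is unattainable, so the statement can only be read as holding for $m_\Om=s_\Om$ (the paper's one-line justification likewise produces a positive curvature value only for the Szeg\"o metric); if the ``or'' were intended conjunctively, as it is in the paper's other theorems, the Carath\'eodory half would be false as stated.
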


Lastly, Theorem~\ref{variation} also allows us to answer a question related to the comparison of the Carath\'eodory and Szeg\"o metrics. It was shown in \cite{bl14} that $s_{\Om} \geq c_{\Om}$, and therefore the ratio $s_{\Om}/c_{\Om}$ has the universal lower bound $1$. On the other hand, since $s_{A_r}(r^{1/2})/ c_{A_r}(r^{1/2}) \to +\infty$ as $ r\to 0+$, we have
\begin{thm}
There is no universal constant $M>0$ such that
\[
s_{\Om}(z)/c_{\Om}(z) \leq M,
\]
for all $z\in \Om$ and for all $C^{\infty}$-smoothly bounded domains $\Om \subset \mf{C}$.
\end{thm}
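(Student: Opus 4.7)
The plan is to observe that this statement follows essentially immediately from Theorem~\ref{variation}, parts (a) and (b), by evaluating the ratio $s_\Om/c_\Om$ along the family of annuli $A_r = \{z \in \mf{C} : r < |z| < 1\}$ at the radius $|z| = r^{1/2}$. Since each $A_r$ is bounded by two concentric circles, it is a $C^\infty$-smoothly bounded domain, so it lies in the class over which the universal bound is claimed.

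I would proceed by contradiction: suppose there exists $M>0$ such that $s_\Om(z)/c_\Om(z) \leq M$ for every $z$ in every $C^\infty$-smoothly bounded $\Om \subset \mf{C}$. Apply this hypothesis to $\Om = A_r$ at the point $z_r = r^{1/2}$. By Theorem~\ref{variation}(a), taking $\la = 1/2$, we get
\[
\lim_{r \to 0+} c_{A_r}(r^{1/2}) = 2.
\]
By Theorem~\ref{variation}(b), again with $\la = 1/2$ (which lies in the interval $(1/4, 1)$), we get
\[
\lim_{r \to 0+} s_{A_r}(r^{1/2}) = +\infty.
\]
Dividing yields $s_{A_r}(z_r)/c_{A_r}(z_r) \to +\infty$ as $r \to 0+$, contradicting the assumed uniform bound $M$. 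This proves the theorem.

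There is effectively no obstacle, since the heavy lifting has already been done in Theorem~\ref{variation}; the only point worth being careful about is ensuring that the chosen family $A_r$ lies in the stated class of domains, which is clear from the smoothness of concentric circles. I would write this up in a few lines only, since the argument is a direct extraction from the two limit computations and requires no additional analytic input.
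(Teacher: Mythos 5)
Your proposal is correct and matches the paper's own argument exactly: the paper deduces this theorem from Theorem~\ref{variation}(a) and (b) by noting that $c_{A_r}(r^{1/2})\to 2$ while $s_{A_r}(r^{1/2})\to+\infty$ as $r\to 0+$, so the ratio blows up along the annuli $A_r$ at the points $r^{1/2}$. No gaps; the write-up can be as short as you describe.
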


\section{Some observations}
In this section, we present some auxiliary results on the Szeg\"o kernel and the Szeg\"o metric that are needed to prove our theorems.
\subsection{Localisation of $S_{\Om}(z)$}
The following localisation of the Carath\'eodory metric is well-known (see \cite[Section 19.3]{jp13}): if $\Om \subset \mf{C}$ is a $C^{\infty}$-smoothly bounded domain, $p \in b\Om$, and $U$ is a (sufficiently small) neighbourhood of $p$, then
\begin{equation}\label{loc-ca}
\lim_{z \to p} \frac{c_{U\cap \Om}(z)}{c_{\Om}(z)}=1.
\end{equation}
Now let $\ti \Om\subset \Om$ be $C^{\infty}$-smoothly bounded domains that share an open piece $\Ga \subset b\Om$. Let $p \in \Ga$ and choose a neighbourhood $U$ of $p$ sufficiently small so that $U \cap \ti \Om = U \cap \Om$ and \eqref{loc-ca} holds. Since the Carath\'eodory metric is distance decreasing,
\[
1 \leq \frac{c_{\ti \Om}(z)}{c_{\Om}(z)} \leq \frac{c_{U \cap \Om}(z)}{c_{\Om}(z)},
\]
and hence by \eqref{loc-ca},
\[
\lim_{z \to p} \frac{c_{\ti \Om}(z)}{c_{\Om}(z)}=1.
\]
Combining this with the identity $c_{\Om}=2\pi S_{\Om}$ on $C^{\infty}$-smoothly bounded domains, we have the following:
\begin{prop}\label{loc-S}
Let $\ti \Om \subset \Om \subset \mf{C}$ be $C^{\infty}$-smoothly bounded domains such that $b\ti \Om$ and $b\Om$ share an open piece $\Ga \subset b\Om$. Then for every $p \in \Ga$,
\[
\lim_{z \to p} \frac{S_{\ti \Om}(z)}{S_{\Om}(z)}=1.
\]
\end{prop}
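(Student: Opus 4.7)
The plan is to deduce the proposition from the analogous localization statement for the Carath\'eodory metric, and then translate across via the Garabedian identity $c_\Omega=2\pi S_\Omega$, which is available because both $\tilde\Omega$ and $\Omega$ are $C^\infty$-smoothly bounded. With that identity in hand, the ratio $S_{\tilde\Omega}(z)/S_\Omega(z)$ coincides with $c_{\tilde\Omega}(z)/c_\Omega(z)$, so the proposition reduces to showing that the latter tends to $1$ at any $p\in\Gamma$.

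First I would choose a neighbourhood $U$ of $p$ small enough that two conditions hold simultaneously: (a) $U\cap\tilde\Omega=U\cap\Omega$, which is possible because $p$ lies in the open common boundary piece $\Gamma$, so locally $\tilde\Omega$ and $\Omega$ agree; and (b) the known localization \eqref{loc-ca} gives $c_{U\cap\Omega}(z)/c_\Omega(z)\to 1$ as $z\to p$. Both conditions are satisfied by shrinking $U$ if necessary.

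Next I would exploit the distance-decreasing property of the Carath\'eodory metric under inclusions of domains. Since $U\cap\Omega=U\cap\tilde\Omega\subset\tilde\Omega\subset\Omega$, for $z\in U\cap\tilde\Omega$ we have the sandwich $c_\Omega(z)\le c_{\tilde\Omega}(z)\le c_{U\cap\Omega}(z)$. Dividing through by $c_\Omega(z)$, which is positive near $p$, and letting $z\to p$, step (b) forces $c_{\tilde\Omega}(z)/c_\Omega(z)\to 1$. Applying Garabedian to both numerator and denominator then yields the claimed limit for $S_{\tilde\Omega}/S_\Omega$.

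The main care point—not really an obstacle, since the bulk of the work is absorbed by \eqref{loc-ca}—is that $C^\infty$-smoothness of both $b\tilde\Omega$ and $b\Omega$ near $p$ is genuinely used twice: once to invoke Garabedian on each domain, and once to apply \eqref{loc-ca} to $\Omega$. Everything else is an elementary sandwich argument, so no further ideas are required beyond the Carath\'eodory localization itself.
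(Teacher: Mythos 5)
Your argument is correct and is essentially identical to the paper's own proof: the authors also choose $U$ so that $U\cap\ti\Om=U\cap\Om$ and \eqref{loc-ca} holds, use the distance-decreasing property to get $1\leq c_{\ti\Om}(z)/c_{\Om}(z)\leq c_{U\cap\Om}(z)/c_{\Om}(z)$, and then convert to Szeg\"o kernels via Garabedian's identity $c_{\Om}=2\pi S_{\Om}$. No gaps.
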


\subsection{Scaling method}
Let $\Om \subset \mf{C}$ be a $C^{\infty}$-smoothly bounded domain and $p \in b\Om$. Let $\psi$ be a $C^{\infty}$-smooth local defining function for $\Om$ at $p$ defined on a neighbourhood $U$ of $p$. Let $(p_{j})_{j\geq 1}$ be a sequence in $U\cap \Om$ converging to $p$. Consider the affine maps $T_j: \mf{C} \to \mf{C}$ defined by
\begin{equation}\label{T_j}
T_{j}(z)=\frac{z-p_{j}}{-\psi(p_{j})},
\end{equation}
and let $\Om_j=T_j(\Om)$. Observe that $T_j(p_j)=0$, and thus every $\Om_j$ contains $0$. Moreover, the function
\[
\psi_j(z)=\frac{1}{-\psi(p_j)}\psi \circ T_j^{-1}(z),
\]
is a $C^{\infty}$-smooth local defining function for $\Om_j$ at $T_j(p)$ defined on $T_j(U)$. Observe that if $K$ is a compact subset of $\mf{C}$ then $K \subset T_j(U)$ for $j$ large and thus $\psi_j$ is defined on $K$. Moreover, for $z \in K$,
\begin{align*}
\psi_j(z) & =\frac{1}{-\psi(p_j)}\psi\big(p_j+z(-\psi(p_j)\big)\\
& = -1 + 2 \Re \big(\pa \psi(p_j) z\big) + \psi(p_j)o(1),
\end{align*}
by expanding $\psi$ in a Taylor series near $p_j$. Therefore, $(\psi_j)_{j \geq 1}$ converges uniformly on compact subsets of $\mf{C}$ to
\[
\psi_{\infty}(z) = -1 + 2 \Re \big(\pa\psi(p)z\big).
\]
As a consequence, a set is compactly contained in the half-plane
\begin{equation}\label{defn-half-plane}
\mathcal{H}=\Big\{z \in \mf{C}: -1 + 2 \Re \big(\pa\psi(p)z\big)<0\Big\}
\end{equation}
if and only if it is uniformly compactly contained in $T_j(U\cap \Om) \subset \Om_j$ for $j$ large. In other words, the sequences of domains $(\Om_j)_{j \geq 1}$ and $(T_j(U\cap \Om))_{j \geq 1}$ converge in the local Hausdorff sense to the half-plane $\mathcal{H}$. Moreover, if $\ti \Om \subset \Om$ is a $C^{\infty}$-smoothly bounded domain such that $ b\ti \Om$ and $b \Om$ share a neighbourhood of $p$ in $b \Om$, and $\ti \Om_j=T_j (\ti \Om)$, then taking $U$ sufficiently small, $T_j(U\cap \Om) \subset \ti \Om_j \subset \Om_j$, and thus the sequence of domains $(\ti \Om_j)_{j \geq 1}$ also converges in the local Hausdorff sense to $\mathcal{H}$.

To this end, we write down the Szeg\"o kernel for the half-plane $\mathcal{H}$. First, recall that the Szeg\"o kernel for the unit disc $\mf{D}$ is given by
\[
S_{\mf{D}}(z,\zeta)=\frac{1}{2\pi}\frac{1}{1-z \ov \zeta}.
\]
Since $z \mapsto (z-i)/(z+i)$ is a conformal equivalence of the upper half-plane $\mf{H}=\{z \in \mf{C} : \Im z>0\}$ onto $\mf{D}$, we have
\[
S_{\mathcal{H}}(z,\zeta)=\frac{i}{2\pi(z-\ov \zeta)}.
\]
Now, the conformal equivalence $z \mapsto -i(\pa\psi(p) z - 1/2)$ of  $\mathcal{H}$ onto $\mf{H}$ gives
\begin{equation}\label{szego-kernel-H}
S_{\mathcal{H}}(z,\zeta)=\frac{\big\vert \pa\psi(p)\big\vert}{2\pi\big(1- \pa\psi(p) z - \ov\pa\psi(p) \ov \zeta\big)}.
\end{equation}
It follows that
\begin{equation}\label{szego-metric-H}
s_{\mathcal{H}}(z)=\frac{\big \vert \pa \psi (p)\big\vert}{1- 2 \Re\big(\pa\psi(p) z\big)},
\end{equation}
and
\begin{equation}\label{h-ord-curv-hp}
\kappa_{m_{\mathcal{H}}}^{(N)}(z)=-4\left(\prod_{m=1}^{N}m! \right)^2,
\end{equation}
for all $z \in \mathcal{H}$.
\begin{prop}\label{stab-S_j}
The sequence $(S_{\Om_j}(z,\zeta))_{j \geq 1}$ converges uniformly on compact subsets of $\mathcal{H} \times \mathcal{H}$ to $S_{\mathcal{H}}(z,\zeta)$. Moreover, all the partial derivatives of $S_{\Om_j}(z,\zeta)$ converge to the corresponding partial derivatives of $S_{\mathcal{H}}(z,\zeta)$ uniformly on compact subsets of $\mathcal{H} \times \mathcal{H}$.
\end{prop}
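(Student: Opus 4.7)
The plan is to first establish convergence of $S_{\Om_j}(z,z)$ on the diagonal by combining Garabedian's identity $c_\Om = 2\pi S_\Om$ with the localization Proposition~\ref{loc-S} and an explicit computation on an osculating disc, and then to extend the convergence to $\mathcal{H} \times \mathcal{H}$ via a normal families argument and a uniqueness principle on a totally real submanifold.

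For the diagonal, by the $C^{\infty}$-smoothness of $b\Om$ one can pick a smoothly bounded disc $D \subset \Om$ tangent to $b\Om$ at $p$ from inside. Set $D_j = T_j(D)$. Proposition~\ref{loc-S} applied to the pair $D \subset \Om$ yields $S_D(w)/S_\Om(w) \to 1$ as $w \to p$; since for any compact $K \Subset \mathcal{H}$ the preimages $T_j^{-1}(K) = p_j - \psi(p_j)K$ shrink uniformly to $\{p\}$, the transformation rule \eqref{trans} for conformal equivalences converts this into
\[
\frac{S_{D_j}(z,z)}{S_{\Om_j}(z,z)} \to 1 \quad \text{uniformly on } K.
\]
On the other hand, the explicit formula $c_{D(a,r)}(w) = r/(r^2 - |w-a|^2)$ together with Garabedian's identity gives, after a direct expansion in the small scaling parameter $-\psi(p_j)$, that $S_{D_j}(z,z) \to S_{\mathcal{H}}(z,z)$ uniformly on $K$. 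Combining these yields $S_{\Om_j}(z,z) \to S_{\mathcal{H}}(z,z)$ uniformly on compact subsets of $\mathcal{H}$.

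For the off-diagonal part, the Cauchy--Schwarz bound $|S_{\Om_j}(z,\zeta)|^2 \leq S_{\Om_j}(z,z)\, S_{\Om_j}(\zeta,\zeta)$ together with the diagonal bound gives local uniform boundedness of $\{S_{\Om_j}(z,\zeta)\}$ on compact subsets of $\mathcal{H} \times \mathcal{H}$. Since each $S_{\Om_j}$ is holomorphic in $z$ and antiholomorphic in $\zeta$, Montel's theorem extracts a subsequence converging uniformly on compact subsets to a limit $F(z,\zeta)$ of the same analytic type. To identify $F$, consider $G(z,w) = F(z,\ov w) - S_{\mathcal{H}}(z,\ov w)$, which is holomorphic on $\mathcal{H} \times \ov{\mathcal{H}}$ and, by the diagonal convergence, vanishes on the totally real submanifold $\{(z,\ov z) : z \in \mathcal{H}\}$ of maximal real dimension~$2$ in $\mathbb{C}^2$. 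The classical uniqueness principle for holomorphic functions vanishing on a maximal totally real submanifold forces $G \equiv 0$, and hence $F = S_{\mathcal{H}}$. As every subsequence of $\{S_{\Om_j}\}$ admits a further subsequence with this same limit, the full sequence converges to $S_{\mathcal{H}}$ uniformly on compact subsets. Uniform convergence of all partial derivatives is then automatic from the Weierstrass theorem on uniform limits of analytic functions.

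The main obstacle is the diagonal convergence, where one must carefully couple the localization statement with the explicit scaling on an inscribed disc and keep track of the lower-order terms in $-\psi(p_j)$; once the diagonal is settled, the extension to $\mathcal{H} \times \mathcal{H}$ via normal families and totally real uniqueness is essentially routine.
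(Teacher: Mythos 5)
Your second half is sound: the reproducing--kernel Cauchy--Schwarz bound $\vert S_{\Om_j}(z,\zeta)\vert^2\leq S_{\Om_j}(z,z)\,S_{\Om_j}(\zeta,\zeta)$, Montel in $(z,\ov\zeta)$, identification of the subsequential limit through its vanishing on the totally real diagonal, and Weierstrass for the derivatives is exactly the step the paper outsources to the proof of Proposition~3.2 of \cite{sv20}. The gap is in your diagonal step. Proposition~\ref{loc-S} requires $b\ti\Om$ and $b\Om$ to share an \emph{open} boundary arc $\Ga$; an internally tangent disc $D$ meets $b\Om$ only at the single point $p$, so it does not satisfy the hypothesis, and its conclusion is in fact false there: with $\Om$ locally the upper half-plane, $p=0$, and $D$ the disc of radius $R$ centred at $iR$, the points $w=\sqrt{y}+iy$ give $d(w,bD)/d(w,b\Om)\to 1-\tfrac{1}{2R}$, so $S_D(w)/S_\Om(w)=c_D(w)/c_\Om(w)$ does not tend to $1$ for unrestricted approach $w\to p$. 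The same phenomenon breaks the rescaling: the base points $p_j\to p$ in the scaling construction are arbitrary in $U\cap\Om$ (possibly tangential), and for a tangential sequence the rescaled discs $D_j=T_j(D)$ do not converge to $\mathcal{H}$ (the origin need not even lie in $D_j$), so $S_{D_j}(z,z)\to S_{\mathcal{H}}(z,z)$ fails in general. Thus the chain producing $S_{\Om_j}(z,z)\to S_{\mathcal{H}}(z,z)$ is broken at its first link.

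The repair is the one the paper uses: replace the inscribed disc by the one-sided localization $U\cap\Om$, which genuinely shares an open boundary arc with $\Om$, so that \eqref{loc-ca} together with the distance-decreasing property gives $c_{T_j(U\cap\Om)}(a)/c_{\Om_j}(a)\to 1$ for each $a\in\mathcal{H}$. Shrinking $U$ so that $U\cap\Om$ is simply connected, the domains $T_j(U\cap\Om)$ are simply connected and converge to $\mathcal{H}$, and Carath\'eodory kernel convergence of the Riemann maps (Proposition~3.1 of \cite{sv20}) yields $c_{T_j(U\cap\Om)}(a)\to c_{\mathcal{H}}(a)$; Garabedian's identity then gives $2\pi S_{\Om_j}(a)=c_{\Om_j}(a)\to c_{\mathcal{H}}(a)=2\pi S_{\mathcal{H}}(a)$. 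With that substitution for the diagonal, your normal-families argument goes through and completes the proof.
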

This follows from Proposition~3.2 of \cite{sv20} together with the following observation that we add for clarity: choose $U$ above to be sufficiently small so that $U \cap \Om$ is simply connected. Fix $a \in \mathcal{H}$. Then $a \in T_j (U\cap \Om)$ for $j$ large. Since $T_j(U\cap \Om)$ is simply connected, by the proof of Proposition~3.1 in \cite{sv20},
\[
c_{T_j(U\cap \Om)}(a) \to c_{\mathcal{H}}(a).
\]
Also, by the transformation formula for the Carath\'eodory metric and by \eqref{loc-ca},
\[
\frac{c_{T_j(U\cap \Om)}(a)}{c_{\Om_j}(a)} =\frac{c_{U\cap \Om}\big(-\psi(p_j)a+p_j\big)}{c_{\Om}\big(-\psi(p_j)a+p_j\big)} \to 1.
\]
This implies that
\[
c_{\Om_j}(a) \to c_{\mathcal{H}}(a).
\]
Now, the proof of Proposition~3.2 of \cite{sv20} applies to show that $(S_{\Om_j}(z, \zeta))_{j \geq 1}$ converges uniformly on compact subsets of $\mathcal{H} \times \mathcal{H}$ to $S_{\mathcal{H}}(z, \zeta)$. The uniform convergence of the derivatives follows from the fact that these functions are holomorphic in $z, \ov \zeta$. Finally, since $S_{\Om_j}(z)$ and $S_{\mathcal{H}}(z)$ are nonvanishing, it is immediate from the above proposition that

\begin{cor}\label{stab-S_j-g_j}
All the partial derivatives of $S_{\Om_j}(z)$ and $s_{\Om_j}(z)$ converge uniformly on compact subsets of $\mathcal{H}$ to the corresponding derivatives of $S_{\mathcal{H}}(z)$ and $s_{\mathcal{H}}(z)$.
\end{cor}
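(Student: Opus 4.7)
The plan is to deduce the corollary directly from Proposition~\ref{stab-S_j} using two ingredients: (a) the fact that on the diagonal, real derivatives of $S_{\Om_j}(z)=S_{\Om_j}(z,z)$ reduce to mixed holomorphic/antiholomorphic derivatives of $S_{\Om_j}(z,\zeta)$ evaluated at $\zeta=z$; and (b) smooth dependence of $s_{\Om_j}$ on $S_{\Om_j}$ via the explicit formula $s_{\Om_j}^2=\partial_z\partial_{\bar z}\log S_{\Om_j}$, combined with the nonvanishing of $S_{\mathcal{H}}$.

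For part (a), I would exploit that $S_{\Om_j}(z,\zeta)$ is holomorphic in $z$ and antiholomorphic in $\zeta$, so the composition $z\mapsto S_{\Om_j}(z,z)$ satisfies
\[
\partial_z^{\,j}\partial_{\bar z}^{\,k}\bigl[S_{\Om_j}(z,z)\bigr]=\bigl(\partial_z^{\,j}\partial_{\bar\zeta}^{\,k}S_{\Om_j}\bigr)(z,z),
\]
and similarly for $S_{\mathcal{H}}$. Proposition~\ref{stab-S_j} gives uniform convergence of every such mixed derivative on compact subsets of $\mathcal{H}\times\mathcal{H}$; restricting to the diagonal $\{(z,z):z\in K\}$ of any compact $K\subset\mathcal{H}$ yields uniform convergence of all partial derivatives of $S_{\Om_j}(z)$ to the corresponding derivatives of $S_{\mathcal{H}}(z)$.

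For part (b), I would start from the identity
\[
s_{\Om_j}^{\,2}(z)=\frac{\partial_z\partial_{\bar z}S_{\Om_j}(z)}{S_{\Om_j}(z)}-\frac{\bigl|\partial_z S_{\Om_j}(z)\bigr|^2}{S_{\Om_j}(z)^2}.
\]
Fix a compact $K\subset\mathcal{H}$. Since $S_{\mathcal{H}}$ is continuous, positive, and nonvanishing on $K$, and since $S_{\Om_j}\to S_{\mathcal{H}}$ uniformly on $K$ by part~(a), there exists $c>0$ with $S_{\Om_j}\ge c$ on $K$ for all large $j$. Combined with the uniform convergence of $\partial_z S_{\Om_j}$ and $\partial_z\partial_{\bar z}S_{\Om_j}$ from part~(a), this forces $s_{\Om_j}^{\,2}\to s_{\mathcal{H}}^{\,2}$ uniformly on $K$, and the same rational-function argument applied to successive derivatives (using Leibniz and chain rules) gives uniform convergence of every derivative of $s_{\Om_j}^{\,2}$. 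Finally, since $s_{\mathcal{H}}^{\,2}$ is a smooth positive function on $\mathcal{H}$, the map $t\mapsto\sqrt{t}$ is smooth near $s_{\mathcal{H}}^{\,2}(K)$, so uniform convergence of $s_{\Om_j}^{\,2}$ and all its derivatives transfers to $s_{\Om_j}$ and all its derivatives.

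The argument is essentially a bookkeeping exercise, so there is no genuine obstacle beyond cleanly organizing the chain-rule computations; the one delicate point worth stating explicitly is the uniform lower bound $S_{\Om_j}\ge c$ on $K$, which is what licenses differentiating the logarithm term by term.
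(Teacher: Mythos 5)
Your proposal is correct and follows exactly the route the paper intends: the paper dispatches this corollary in one line ("since $S_{\Om_j}(z)$ and $S_{\mathcal{H}}(z)$ are nonvanishing, it is immediate from the above proposition"), and your argument is simply the careful expansion of that line — diagonal restriction of the mixed derivatives from Proposition~\ref{stab-S_j}, a uniform positive lower bound for $S_{\Om_j}$ on compacts, and the chain/quotient rule applied to $s_{\Om_j}^2=\pa\ov\pa\log S_{\Om_j}$ followed by the smooth square root. No gaps; the one delicate point you flag (the uniform lower bound) is exactly the nonvanishing hypothesis the paper invokes.
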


\subsection{Boundary asymptotics}
\begin{prop}\label{asymp}
Let $\Om \subset \mf{C}$ be a $C^{\infty}$-smoothly bounded domain and $p \in b\Om$. Let $\psi$ be a $C^{\infty}$-smooth local defining function for $\Om$ at $p$ defined in a neighbourhood $U$ of $p$ and $\mathcal{H}$ be the half-plane defined by
\[
\mathcal{H}=\Big\{z \in \mf{C}: -1 + 2 \Re \big(\pa\psi(p)z\big)<0\Big\}.
\]
Then as $U\cap \Om \ni z \to p$,
\begin{itemize}
\item [(i)] $\pa^{k\ov l}S_{\Om}(z) \big(-\psi(z)\big)^{k+l+1}\to \pa^{k\ov l}S_{\mathcal{H}}(0)=\frac{(k+l)!}{2\pi} \big\vert \pa \psi(p)\big\vert \big(\pa \psi(p)\big)^k \big(\ov\pa\psi(p)\big)^l$.
\item [(ii)] $\pa^{k\ov l}s_{\Om}(z) \big(-\psi(z)\big)^{k+l+1} \to \pa^{k\ov l}s_{\mathcal{H}}(0)=(k+l)! \big\vert \pa \psi(p)\big\vert \big(\pa \psi(p)\big)^k \big(\ov\pa\psi(p)\big)^l$.
\end{itemize}
\end{prop}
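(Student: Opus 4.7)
My plan is to use the scaling machinery already developed in Subsection~2.2, applied with $p_j$ taken to be an arbitrary sequence in $U \cap \Omega$ converging to $p$. The goal is to reduce each derivative $\partial^{k \bar l} S_\Omega(p_j)$ or $\partial^{k \bar l} s_\Omega(p_j)$, suitably rescaled, to the corresponding derivative of the model kernel or metric at the origin of the half-plane $\mathcal{H}$, then compute the latter explicitly from \eqref{szego-kernel-H} and \eqref{szego-metric-H}.

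For (i), since $T_j$ is affine with $T_j'(w) = 1/(-\psi(p_j))$, a positive real constant, the transformation law \eqref{trans} becomes
\[
S_\Omega(z,\zeta) = \frac{1}{-\psi(p_j)}\, S_{\Omega_j}\bigl(T_j(z),T_j(\zeta)\bigr).
\]
Applying $\partial_z^k \partial_{\bar\zeta}^{\,l}$ to both sides and using the chain rule, the only extra factors that appear are $(T_j')^k (\overline{T_j'})^{l} = (-\psi(p_j))^{-(k+l)}$, so at $z=\zeta=p_j$ we get
\[
\bigl(-\psi(p_j)\bigr)^{k+l+1}\, \partial^{k\bar l} S_\Omega(p_j) = \bigl(\partial_z^k \partial_{\bar\zeta}^{\,l} S_{\Omega_j}\bigr)(0,0).
\]
By Proposition~\ref{stab-S_j}, the right-hand side converges to $\partial_z^k \partial_{\bar\zeta}^{\,l} S_{\mathcal{H}}(0,0)$. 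Differentiating \eqref{szego-kernel-H} directly: writing $a = \partial\psi(p)$, each $\partial_z$ brings out a factor $a$ and increases the power of the denominator by one, and each $\partial_{\bar\zeta}$ brings out $\bar a$ similarly, yielding
\[
\partial_z^k \partial_{\bar\zeta}^{\,l} S_{\mathcal{H}}(0,0) = \frac{(k+l)!}{2\pi}\,|\partial\psi(p)|\,\bigl(\partial\psi(p)\bigr)^{k}\bigl(\bar\partial\psi(p)\bigr)^{l},
\]
which is the stated value. Since the sequence $(p_j)$ was arbitrary, this gives the limit as $U\cap \Omega \ni z\to p$.

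For (ii), the same argument works with the conformal invariance $s_\Omega(z)|dz| = s_{\Omega_j}(T_j(z))|T_j'(z)|\,|dz|$, which reads $s_\Omega(z) = s_{\Omega_j}(T_j(z))/(-\psi(p_j))$. Differentiating and evaluating at $p_j$ gives
\[
\bigl(-\psi(p_j)\bigr)^{k+l+1}\, \partial^{k\bar l} s_\Omega(p_j) = \partial^{k\bar l} s_{\Omega_j}(0),
\]
and Corollary~\ref{stab-S_j-g_j} hands us convergence to $\partial^{k\bar l} s_{\mathcal{H}}(0)$. Rewriting \eqref{szego-metric-H} as
\[
s_{\mathcal{H}}(z) = \frac{|\partial\psi(p)|}{1 - \partial\psi(p)z - \overline{\partial\psi(p)}\,\bar z},
\]
one sees it has the same functional form as $S_{\mathcal{H}}(z,z)$ up to the factor $2\pi$, so the same differentiation gives the claimed $\partial^{k\bar l} s_{\mathcal{H}}(0)$.

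There is no genuine obstacle here: the whole argument is a bookkeeping exercise built on top of the scaling results of Subsection~2.2. The only point where one must stay attentive is that the transformation formula for the Szeg\"o kernel in \eqref{trans} involves $\sqrt{T_j'}$, but the affineness of $T_j$ together with $-\psi(p_j) > 0$ makes this square root a genuine positive real constant, so all chain-rule computations collapse cleanly to a single scalar factor. Everything else is explicit differentiation of a rational function in one variable.
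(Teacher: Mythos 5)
Your proposal is correct and follows essentially the same route as the paper: differentiate the transformation identity $S_{\Om}(z)=(-\psi(p_j))^{-1}S_{\Om_j}(T_j(z))$ (resp. the analogous identity for $s_{\Om}$) along an arbitrary sequence $p_j\to p$, invoke Proposition~\ref{stab-S_j} and Corollary~\ref{stab-S_j-g_j} for convergence of the derivatives at $0$, and compute $\pa^{k\ov l}S_{\mathcal{H}}(0)$ and $\pa^{k\ov l}s_{\mathcal{H}}(0)$ explicitly from \eqref{szego-kernel-H} and \eqref{szego-metric-H}. The only cosmetic difference is that you differentiate the off-diagonal kernel and then restrict to the diagonal, while the paper differentiates the diagonal restriction directly; these agree since $S_{\Om}(z,\zeta)$ is holomorphic in $z$ and antiholomorphic in $\zeta$.
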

\begin{proof}
Let $(p_{j})_{j \geq 1}$ be a sequence in $\Om$ such that $p_{j}\to p$. Then $p_j \in U$ for $j$ large. Let $\Om_j=T_j(\Om)$ where $T_j$ is as in \eqref{T_j}. Differentiating
\[
S_{\Om}(z)=S_{\Om_j}\big(T_j(z)\big)\big(-\psi(p_j)\big)^{-1},
\]
we obtain
\[
\pa^{k\ov l} S_{\Om}(p_j)\big(-\psi(p_j)\big)^{k+l+1} =\pa^{k\ov l}S_{\Om_j}(0) \to \pa^{k\ov l}S_{\mathcal{H}}(0),
\]
by Proposition~\ref{stab-S_j}, which establishes the first part of (i). The second part of (i) follows from an explicit calculation of the derivatives of $S_{\mathcal{H}}(z)$.

Similarly, (ii) is obtained by differentiating
\[
s_{\Om}(z)=s_{\Om_j}\big(T_j(z)\big) \big(-\psi(p_j)\big)^{-1},
\]
which completes the proof of the proposition.
\end{proof}

\subsection{Localisation of $\pa^{k \ov l} S_{\Om}$ and $\pa^{k \ov l} s_{\Om}$}
As an immediate consequence of the boundary asymptotics, we have the following localisation result that generalises Proposition~\ref{loc-S}:
\begin{prop}
Let $\ti \Om \subset \Om \subset \mf{C}$ be $C^{\infty}$-smoothly bounded domains such that $b\ti \Om$ and $b\Om$ share an open piece $\Ga \subset b\Om$. Then for $p \in \Ga$ and $k,l \geq 0$,
\[
\frac{\pa^{k\ov l}S_{\ti \Om}(z)}{\pa^{k\ov l}S_{\Om}(z)} \to 1 \quad \text{and} \quad \frac{\pa^{k\ov l}s_{\ti \Om}(z)}{\pa^{k\ov l}s_{\Om}(z)} \to 1,
\]
as $z \to p$, $z \in \ti \Om$.
\end{prop}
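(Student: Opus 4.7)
The plan is to deduce this directly from Proposition~\ref{asymp}: if $\ti \Om$ and $\Om$ can be arranged to share a common local defining function $\psi$ near $p$, then Proposition~\ref{asymp} applied to each domain separately forces the two weighted quantities $\pa^{k\ov l}S_{\Om}(z)\bigl(-\psi(z)\bigr)^{k+l+1}$ and $\pa^{k\ov l}S_{\ti \Om}(z)\bigl(-\psi(z)\bigr)^{k+l+1}$ to tend to the same nonzero limit. The factors $(-\psi(z))^{k+l+1}$ will then cancel in the ratio and deliver the value $1$.

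To set this up, I would first shrink a neighbourhood $U$ of $p$ so that $U \cap b\Om = U \cap b\ti \Om \subset \Ga$, which is possible because $\Ga$ is an open piece of the common boundary. Since $\ti \Om \subset \Om$ and the two domains lie on the same side of this shared boundary piece near $p$, this also forces $U \cap \ti \Om = U \cap \Om$. Let $\psi$ be any $C^{\infty}$-smooth local defining function for $\Om$ at $p$ defined on $U$; by the preceding remark the same $\psi$ is simultaneously a local defining function for $\ti \Om$ at $p$.

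Next, apply Proposition~\ref{asymp}(i) to $\Om$ with this $\psi$ to obtain, as $z \to p$ through $U \cap \Om$,
\[
\pa^{k\ov l}S_{\Om}(z)\bigl(-\psi(z)\bigr)^{k+l+1} \longrightarrow \frac{(k+l)!}{2\pi}\bigl|\pa\psi(p)\bigr|\bigl(\pa\psi(p)\bigr)^k\bigl(\ov\pa\psi(p)\bigr)^l,
\]
and apply the same proposition to $\ti \Om$ with the same $\psi$ to obtain the identical limit as $z \to p$ through $U \cap \ti \Om = U \cap \Om$. This limit is a nonzero complex number, since $\pa\psi(p) \neq 0$ for a defining function of a smoothly bounded domain, and hence $\bigl(\pa\psi(p)\bigr)^k\bigl(\ov\pa\psi(p)\bigr)^l \neq 0$. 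Dividing the two asymptotic expressions and cancelling the common factor $(-\psi(z))^{k+l+1}$ therefore yields
\[
\lim_{z \to p,\, z \in \ti \Om} \frac{\pa^{k\ov l}S_{\ti \Om}(z)}{\pa^{k\ov l}S_{\Om}(z)}=1.
\]
The Szeg\"o-metric assertion is proved by the identical argument using part (ii) of Proposition~\ref{asymp}, whose limiting constant $(k+l)!\bigl|\pa\psi(p)\bigr|\bigl(\pa\psi(p)\bigr)^k\bigl(\ov\pa\psi(p)\bigr)^l$ is nonzero for the same reason.

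There is no substantive obstacle here: the hard analytic input---the scaling argument and the explicit computation on the model half-plane $\mathcal{H}$---has already been carried out in Proposition~\ref{asymp}. The only point to verify is that the limiting asymptotic constant depends solely on the one-jet of $\psi$ at $p$, so that it is the same for $\Om$ and $\ti \Om$, and that it does not vanish; both are immediate from the fact that $\psi$ is a defining function.
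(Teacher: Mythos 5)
Your proposal is correct and follows exactly the paper's own argument: choose a common local defining function $\psi$ near $p$, apply Proposition~\ref{asymp} to both $\Om$ and $\ti\Om$ to see that the weighted derivatives have the same nonzero limit, and cancel. The extra remarks you supply (that $U\cap\ti\Om=U\cap\Om$ and that the limiting constant is nonzero because $\pa\psi(p)\neq 0$) are details the paper leaves implicit, but the route is the same.
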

\begin{proof}
Choose a common $C^{\infty}$-smooth local defining function $\psi$ for $\Om$ and $\ti \Om$ at $p$ defined on a neighbourhood $U$ of $p$.  Then, by Proposition~\ref{asymp}, both $\pa^{k \ov l}S_{\Om}(z)$ and $\pa^{k\ov l}S_{\ti \Om}(z)$ (resp. $\pa^{k\ov l}s_{\Om}(z)$ and $\pa^{k\ov l}s_{\ti \Om}(z)$) have the same boundary asymptotics and hence the proposition follows.
\end{proof}

\subsection{Comparison with the classical metrics}
Another consequence of Proposition~\ref{asymp} is that the Szeg\"o metric is comparable with the hyperbolic metric. Let $\rho_{\Om}(z)\vert dz\vert$ be the hyperbolic metric and $s_\Om(z)\vert dz\vert$ the Szeg\"o metric on a $C^{\infty}$-smoothly bounded domain $\Om \subset \mf{C}$. By Proposition~\ref{asymp} (ii), there exists a constant $C>1$ such that for every $z\in \Om$,
\begin{equation}\label{g1}
 C^{-1}\rho_{\Om}(z)\leq s_{\Om}(z)\leq C\rho_{\Om}(z).
\end{equation}
It follows that $s_{\Om}(z)\vert dz\vert$ is comparable to $\rho_{\Om}(z) \vert dz\vert$ and hence also to the Carath\'eodory and the Bergman metric.

The above observation implies that the Szeg\"o metric on $\Om$ is Gromov hyperbolic. In a coarse sense, a Gromov hyperbolic metric space behaves like a negatively curved manifold and is defined as follows. Let $(X, d)$ be a metric space and $x,y \in X$. By a geodesic segment in $X$ joining $x$ and $y$, we mean continuous map $\sigma:[a,b] \to X$, where $[a,b] \subset \mf{R}$ is a closed interval, such that $\sigma(a)=x$, $\sigma(b)=y$, and for every $s,t \in [a,b]$,
\[
d\big(\sigma(s),\sigma(t)\big)=\vert s-t\vert.
\]
A geodesic segment joining $x$ and $y$, despite its possible non-uniqueness, will be denoted by $[x, y]$. The space $(X, d)$ is called a geodesic space if for every pair of points $x,y \in X$, there is a geodesic segment joining $x$ and $y$. Given $\de> 0$, a geodesic metric space $(X, d)$ is called $\delta$-hyperbolic if every geodesic triangle $[x,y] \cup [y,z] \cup [z,w]$ in $X$ is $\delta$-thin, i.e., 
\[
d\big(w, [y, z]\cup [z, x]\big)\leq \delta,
\]
for all $w\in [x,y]$. The metric space $(X,d)$ is called Gromov hyperbolic if there exists a $\de>0$ such that $(X,d)$ is $\de$-hyperbolic.

For brevity, we will denote the distance functions induced by $\rho_{\Om}$ and $s_{\Om}$ by the same notations $\rho_{\Om}$ and $s_{\Om}$, respectively.
\begin{cor}
Let $\Om \subset \mf{C}$ be a non-degenerate finitely connected domain. Then the metric space $(\Om, s_{\Om})$ is Gromov hyperbolic.
\end{cor}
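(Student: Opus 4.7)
The strategy is to bootstrap Gromov hyperbolicity of $(\Om, s_\Om)$ from that of the hyperbolic metric space $(\Om, \rho_\Om)$ via the bi-Lipschitz comparison \eqref{g1}.

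First, I would reduce to the $C^\infty$-smoothly bounded case using conformal invariance of both $s_\Om$ and $\rho_\Om$: fix a conformal equivalence $\phi: \Om \to \Om'$ onto a bounded domain with real analytic boundary, so that $\phi$ is an isometry for both metrics and it suffices to establish the corollary for $\Om'$. On such a domain \eqref{g1} holds, and integrating the pointwise comparison along piecewise $C^1$ curves yields
\[
C^{-1}\rho_{\Om'}(z,w) \le s_{\Om'}(z,w) \le C\rho_{\Om'}(z,w)
\]
for every $z,w \in \Om'$. In particular $s_{\Om'}$ inherits completeness from $\rho_{\Om'}$, and since it is the length metric of a smooth conformal Riemannian metric, Hopf--Rinow forces $(\Om', s_{\Om'})$ to be a geodesic metric space.

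Next, I would invoke the classical fact that $(\Om', \rho_{\Om'})$ is Gromov hyperbolic. For a bounded, non-degenerate, finitely connected planar domain with smooth boundary this is well known: Koebe presents $\Om'$ as a circle domain, whose boundary is uniformly perfect and whose quasi-hyperbolic metric is therefore comparable to the hyperbolic metric, while bounded planar uniform domains are Gromov hyperbolic in the quasi-hyperbolic metric by the work of Balogh--Buckley and Bonk--Heinonen--Koskela. Equivalently, $\Om'$ uniformises as $\mf{H}/\Ga$ for a convex-cocompact Fuchsian group $\Ga$, from which Gromov hyperbolicity of $(\Om', \rho_{\Om'})$ also follows.

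Finally, since bi-Lipschitz maps between geodesic metric spaces are a fortiori quasi-isometries, and $\delta$-hyperbolicity is preserved under quasi-isometries of geodesic spaces, the hyperbolicity of $(\Om', \rho_{\Om'})$ transfers to $(\Om', s_{\Om'})$ with a new constant depending only on $C$ and the original $\delta$. The only delicate ingredient is the Gromov hyperbolicity of $(\Om', \rho_{\Om'})$ itself; the remaining steps --- conformal reduction, integration of \eqref{g1}, and the quasi-isometry invariance of $\delta$-hyperbolicity --- are routine.
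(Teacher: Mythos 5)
Your proposal follows essentially the same route as the paper: reduce to the $C^\infty$-smoothly bounded case, use the comparison \eqref{g1} to see that the identity map is a quasi-isometry from $(\Om,\rho_\Om)$ to the complete geodesic space $(\Om,s_\Om)$, and transfer Gromov hyperbolicity of the hyperbolic metric (which the paper takes from Rodr\'iguez--Tour\'is, and which you justify by other standard means) via quasi-isometry invariance. The argument is correct and matches the paper's proof in all essential respects.
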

\begin{proof}
Without loss of generality, we may assume that $\Om$ is $C^{\infty}$-smoothly bounded. Then, from \cites{rt4}, $(\Om,\rho_{\Om})$ is $\delta$-hyperbolic for some $\delta> 0$. Since $\rho_{\Om}(z)|dz|$ is complete, this implies that $s_{\Om}(z)\vert z \vert$ is also complete and hence $(\Om, s_{\Om})$ is a geodesic space. Moreover, the identity map between $(\Om,\rho_{\Om})$ and $(\Om, s_{\Om})$ is a quasi-isometry and consequently $(\Om,s_{\Om})$ is also $\delta$-hyperbolic, possibly for a different choice of $\delta$.
\end{proof}

\section{Geodesics of the Carath\'eodory and Szeg\"o metrics}
In this section, we prove Theorem~\ref{geo}.
\subsection{Existence of closed geodesic}
We recall the following result of Herbort:

\begin{thm}[Theorem 1.1 \cite{hg83} ]
Let $G$ be a bounded domain in $\mf{R}^{N}$, where $N \in \mf{N}$, such that $\pi_1(G)$ is nontrivial. Assume that the following conditions are satisfied:
\begin{itemize}
\item [(i)] For each $p \in \ov G$, there is an open neighbourhood $U\subset\mf{R}^N$ such that the set $G \cap U$ is simply connected.
\item [(ii)] $G$ is equipped with a complete Riemannian metric $g$ which possesses the following property: (B) For each $S>0$ there exists a $\de>0$, such that for each $p \in G$ with $d(p, bG)<\de$ and every $X \in \mf{R}^N$, $g(p,X) \geq S\|X\|^2$ (where $\|\cdot \|$ denotes the Euclidean norm). 
\end{itemize}
Then every nontrivial homotopy class in $\pi_1(G)$ contains a closed geodesic for $g$.
\end{thm}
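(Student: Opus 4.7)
The plan is to establish the existence of a closed geodesic in each nontrivial free homotopy class of loops in $G$ via the direct method of the calculus of variations applied to the length functional. Fix a nontrivial class represented by $\ga_0$ and let
\[
L=\inf\bigl\{\ell_g(\sigma):\sigma \text{ is a piecewise } C^1 \text{ loop freely homotopic to } \ga_0\bigr\},
\]
where $\ell_g$ denotes Riemannian length. I would first show $L>0$. If $\sigma_k$ is a sequence of loops in the class with $\ell_g(\sigma_k) \to 0$, then the local comparability of $g$ and the Euclidean metric on compact subsets of $G$, combined with the blow-up of $g$ near $bG$ from property (B), forces the Euclidean diameter of $\sigma_k$ to tend to $0$. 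For $k$ large $\sigma_k$ then fits in an open $U\subset \mf{R}^N$ with $G\cap U$ simply connected (condition (i)), making $\sigma_k$ null-homotopic and contradicting nontriviality of $\ga_0$.

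Next I would prove that the infimum is attained. The key point is that a minimizing sequence $\sigma_k$ cannot drift toward $bG$: because $\ga_0$ is nontrivial, $\sigma_k$ encircles some bounded complementary component of $G$, and by non-degeneracy this ``hole'' has positive Euclidean diameter, so the Euclidean length of $\sigma_k$ is bounded below by a constant $\eta>0$ depending only on the class. If some $\sigma_k$ lay inside $\{z\in G: d(z,bG)<\de\}$ for arbitrarily small $\de$, property (B) applied with large $S$ would force $\ell_g(\sigma_k)\geq \sqrt{S}\,\eta \to \infty$, contradicting $\ell_g(\sigma_k)\to L$. Hence for each $k$ some point $q_k\in\sigma_k$ satisfies $d(q_k,bG)\geq \de_0$ for a $\de_0>0$ independent of $k$; passing to a Euclidean-convergent subsequence of $q_k$ with limit $q_\infty\in G$, and using Hopf--Rinow for the complete metric $g$, the images $\sigma_k$ lie in the compact set $\ov{B_g(q_\infty,L+1)}\subset G$.

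On this compact set $g$ and the Euclidean metric are commensurate, so the $\sigma_k$ (parametrized by $g$-arc length) form an equicontinuous and Euclidean-bounded family. Arzel\`a--Ascoli yields a subsequence converging uniformly to a Lipschitz loop $\sigma_\infty$ in $G$, and lower semicontinuity of length gives $\ell_g(\sigma_\infty)\leq L$, so equality holds. Condition (i) shows that $\sigma_\infty$ remains freely homotopic to $\ga_0$: for large $k$, corresponding points of $\sigma_k$ and $\sigma_\infty$ sit in common simply connected local pieces of $G$, and the short connecting arcs assemble into an explicit free homotopy. Hence $\sigma_\infty$ is a length minimizer in its class, and a standard first-variation computation shows it is a smooth closed geodesic.

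The main obstacle is confining the minimizing sequence to a compact subset of $G$. Completeness of $g$ via Hopf--Rinow provides compactness of closed $g$-balls but does not \emph{a priori} prevent loops from accumulating at $bG$; property (B) rules this out by forcing any loop squeezed into a thin Euclidean neighbourhood of $bG$ to have large $g$-length, since its topologically enforced Euclidean length is amplified by the blow-up of $g$. Condition (i) then plays the complementary role of ensuring both the lower bound $L>0$ and the preservation of the homotopy class in the Arzel\`a--Ascoli limit.
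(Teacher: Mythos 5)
This theorem is quoted from Herbort's paper \cite{hg83} and used as a black box; the present paper contains no proof of it, so there is nothing to compare your argument against and I can only assess it on its own terms.

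Your overall strategy --- the direct method for the length functional in a free homotopy class, with property (B) preventing escape to the boundary and hypothesis (i) supplying both the positivity of $L$ and the stability of the class under uniform limits --- is sound, and most steps go through. There is, however, one step whose justification fails as written: the uniform lower bound $\eta>0$ on the \emph{Euclidean} length of loops in the class. You derive it from the claim that a nontrivial loop ``encircles some bounded complementary component of $G$'' whose diameter is positive ``by non-degeneracy.'' This is a purely planar picture: for a bounded domain in $\mathbb{R}^N$ with $N\geq 3$, a nontrivial element of $\pi_1(G)$ need not link any complementary component in a way that forces a diameter bound, and ``non-degeneracy'' is a hypothesis of the paper's main results on planar domains, not of Herbort's theorem. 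The correct source of the bound is hypothesis (i) itself: cover the compact set $\overline{G}$ by finitely many open sets $U_1,\dots,U_m$ with each $G\cap U_i$ simply connected, let $\lambda>0$ be a Lebesgue number of this cover, and note that any loop in $G$ of Euclidean diameter less than $\lambda$ lies in a single $G\cap U_i$ and is therefore null-homotopic; hence every nontrivial loop has Euclidean diameter at least $\lambda$ and Euclidean length at least $2\lambda=:\eta$. This is exactly the mechanism you already use (with $g$-length in place of Euclidean length) to prove $L>0$, so the repair costs nothing. With $\eta$ so obtained, your use of (B) to pin a point $q_k$ of each $\sigma_k$ at definite distance from $bG$, the Hopf--Rinow confinement to a compact $g$-ball, Arzel\`a--Ascoli, lower semicontinuity, and the first-variation step all work. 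Two minor tidyings: the limit $\sigma_\infty$ is only Lipschitz, so either take the infimum over rectifiable loops or replace $\sigma_\infty$ locally by geodesic arcs before applying the first variation; and once the $\sigma_k$ lie in a fixed compact $K\subset G$, the free homotopy from $\sigma_k$ to $\sigma_\infty$ can simply be the straight-line homotopy as soon as the uniform distance is below $d(K,bG)$, which is easier than patching through the local pieces from (i).
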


\begin{proof}[Proof of Theorem \ref{geo} (i)]
Without loss of generality, assume that $\Om$ is $C^{\infty}$-smoothly bounded. Also, it is enough to show that $ds_{m_\Om}^2=m^2_{\Om}(z) \vert dz\vert^2$ satisfies the hypothesis of the above theorem. Let $\psi$ be a $C^{\infty}$-smooth defining function for $\Om$. By Proposition~\ref{asymp} (ii), for $z \in D$ and $v \in \mf{C}$, we have
\[
\frac{ds_{m_{\Om}}^{2}(z,v)}{|v|^{2}}=m_{\Om}^2(z)\gtrsim \big(\psi(z)\big)^{-2},
\]
which implies that the Property (B) is satisfied. All the other conditions are evidently true. This proves (i).
\end{proof}

\subsection{Existence of geodesic spirals} 
The main tool for the proof of Theorem~1.1 (ii) is a result of Herbort from \cite{h83}. To state this result we first recall the notion of a geodesic loop. Let $(M,g)$ be a complete Riemannian manifold.
    \begin{itemize}
        \item [(a)] A \textit{geodesic spiral} is a non-closed geodesic $\sigma:\mathbb{R}\to M$ in which each point $\sigma(t)$ for $t\geq 0$ belongs to some compact subset $K$ of $M$.
          \item [(b)] For a nontrivial geodesic $\sigma:\mathbb{R}\to M$, if there exist $t_{1},~t_{2}\in\mathbb{R}$ with $\sigma(t_{1})=\sigma(t_{2})$, then the segment $\sigma_{|_{[t_{1},t_{2}]}}$ is referred to as a geodesic loop passing through $\sigma(t_{1})$.
    \end{itemize}

\begin{lem}[Lemma 2.2, \cite{h83}]\label{Lem-Her-cpt}
Let $(M,g)$ be a complete Riemannian manifold the universal cover of which possesses infinitely many leaves. Let $x_0$ be a point in $M$ such that there is no closed geodesic passing through $x_0$. Assume that there exists a compact subset $K$ of $M$ with the property that each geodesic loop passing through $x_0$ is contained in $K$. Then there exists a geodesic spiral passing through $x_0$.
\end{lem}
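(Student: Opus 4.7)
The plan is to construct the spiral as a limit of progressively longer geodesic loops at $x_0$, all confined to the compact set $K$ by hypothesis. Let $\pi:\ti M\to M$ denote the universal covering and $\ti g=\pi^{*}g$ the pullback metric. Since the cover has infinitely many sheets, the fiber $\pi^{-1}(x_0)$ is infinite; fix a basepoint $\ti x_0\in\pi^{-1}(x_0)$. Because $(M,g)$ is complete, so is $(\ti M,\ti g)$, hence by Hopf--Rinow every other lift $\ti y\in\pi^{-1}(x_0)$ is joined to $\ti x_0$ by a minimizing $\ti g$-geodesic; projecting that geodesic to $M$ produces a geodesic loop at $x_0$ of length $d_{\ti g}(\ti x_0,\ti y)$. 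Since the deck group acts properly discontinuously, $\pi^{-1}(x_0)$ is a discrete subset of $\ti M$, so every $\ti g$-ball around $\ti x_0$ contains only finitely many lifts. We may therefore pick distinct lifts $\ti y_{n}$ with $L_{n}:=d_{\ti g}(\ti x_0,\ti y_{n})\to\infty$, and let $\sigma_{n}:[0,L_{n}]\to M$ be the associated unit-speed geodesic loop; by hypothesis the trace of each $\sigma_{n}$ lies inside $K$.

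Next I would pass to a limiting geodesic. Put $v_{n}=\sigma_{n}'(0)\in T_{x_0}M$, a unit vector. The unit sphere of $T_{x_0}M$ is compact, so after extracting a subsequence $v_{n}\to v_{\infty}$ for some unit vector $v_{\infty}$. Completeness of $g$ lets us define the maximal geodesic $\sigma:\mf{R}\to M$ with $\sigma(0)=x_0$ and $\sigma'(0)=v_{\infty}$. For any fixed $T>0$ and $n$ large enough that $L_{n}>T$, the segment $\sigma_{n}|_{[0,T]}$ is contained in $K$; smooth dependence of the exponential map on its initial velocity gives $\sigma_{n}(t)\to\sigma(t)$ uniformly on $[0,T]$, so the closedness of $K$ forces $\sigma([0,T])\subset K$. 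Letting $T\to\infty$ yields $\sigma([0,\infty))\subset K$. Finally, $\sigma$ is not closed: if it were periodic, it would be a closed geodesic through $x_0=\sigma(0)$, directly contradicting the standing hypothesis. Hence $\sigma$ is a geodesic spiral passing through $x_0$.

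The main obstacle is the compactness argument needed to pass from the sequence of ever-longer geodesic loops to a genuine limiting geodesic: one must control the initial data of the $\sigma_{n}$ (handled by the compactness of the unit tangent sphere at $x_0$) while simultaneously transferring the confinement $\sigma_{n}([0,L_{n}])\subset K$ through to the limit via continuous dependence on initial conditions. The hypotheses make this possible because all geodesic loops at $x_0$ are trapped in a \emph{single} compact set $K$, independently of their length; the ``no closed geodesic through $x_0$'' hypothesis then automatically rules out the only obstruction, namely that the limiting geodesic could itself turn out to be closed.
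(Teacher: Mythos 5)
Your proposal is correct, but note that the paper itself contains no proof of this statement to compare against: it is quoted as Lemma~2.2 of Herbort \cite{h83} and used as a black box. Your argument --- producing geodesic loops at $x_0$ of length tending to infinity by joining a fixed lift of $x_0$ to distinct points of the (infinite, discrete) fibre in the complete universal cover, extracting a convergent subsequence of initial unit vectors, transferring the confinement in $K$ to the limit geodesic via continuous dependence on initial conditions, and excluding closedness of the limit by the hypothesis that no closed geodesic passes through $x_0$ --- is complete and is essentially the classical route of Herbort's original proof.
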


In view of this lemma, the problem of showing the existence of geodesic spirals for $m_D(z)\vert dz\vert$ now reduces to finding an appropriate compact subset $K \subset \Om$. For this, we require the following:

\begin{prop}\label{spiral}
Let $\Om \subset \mf{C}$ be a $C^{\infty}$-smoothly bounded domain with a $C^{\infty}$-smooth defining function $\psi$. Then there exists $\ep>0$ such that for each geodesic $\sigma:\mathbb{R}\to \Om$ of the metric $m_\Om(z) \vert dz\vert$ satisfying $(\psi\circ \sigma)(0)>-\epsilon$ and $(\psi\circ \sigma)'(0)=0$, we have $(\psi\circ \sigma)''(0)>0$.   
\end{prop}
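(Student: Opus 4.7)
The plan is to compute $(\psi\circ\sigma)''(0)$ directly from the geodesic equation and then extract the leading-order behaviour near $b\Om$ using the asymptotics of Proposition~\ref{asymp}. Writing $m = m_\Om$ and $\sigma(t) = z(t)$ for a unit-speed geodesic, the Euler--Lagrange equation for the Lagrangian $m^2 \dot z \ov{\dot z}$ reads
\[
\ddot z + 2(\log m)_z \dot z^2 = 0, \qquad m(z)\vert \dot z\vert = 1.
\]
Differentiating $\psi\circ\sigma$ twice and substituting $\ddot z$ from the first equation yields
\[
(\psi\circ\sigma)''(t) = 2\Re\Big\{\big[\psi_{zz} - 2\psi_z(\log m)_z\big]\dot z^2\Big\} + 2\psi_{z\ov z}\vert\dot z\vert^2,
\]
which is the identity I would evaluate at $t=0$ under the stated hypotheses.

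The hypothesis $(\psi\circ\sigma)'(0) = 2\Re\big(\psi_z(\sigma(0))\dot z(0)\big) = 0$ says that $\psi_z(\sigma(0))\dot z(0) = i\beta$ for some real $\beta$, while unit speed gives $\vert\beta\vert = \vert\psi_z(\sigma(0))\vert/m(\sigma(0))$. Both $m = c_\Om$ and $m = s_\Om$ satisfy, via Proposition~\ref{asymp}, the asymptotics $m(z)(-\psi(z)) \to \vert\psi_z(p)\vert$ and $\pa m(z)(-\psi(z))^2 \to \vert\psi_z(p)\vert\psi_z(p)$ as $z\to p\in b\Om$; taking the ratio gives $(\log m)_z(z)(-\psi(z))\to \psi_z(p)$, and hence also $\vert\beta\vert = (-\psi(\sigma(0)))(1+o(1))$. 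Substituting $\dot z(0) = i\beta/\psi_z(\sigma(0))$ into the dominant term of the above formula,
\[
-4\Re\big\{\psi_z(\log m)_z \dot z^2\big\} = 4\beta^2\,\Re\Big\{\frac{(\log m)_z(\sigma(0))}{\psi_z(\sigma(0))}\Big\},
\]
and the quantity in braces is asymptotically $1/(-\psi(\sigma(0)))$ while $\beta^2 \sim \psi(\sigma(0))^2$. This gives $-4\Re\{\psi_z(\log m)_z\dot z^2\} = -4\psi(\sigma(0)) + o(\vert\psi(\sigma(0))\vert) = 4\vert\psi(\sigma(0))\vert + o(\vert\psi(\sigma(0))\vert)$, while the remaining contributions $2\Re\{\psi_{zz}\dot z^2\}$ and $2\psi_{z\ov z}\vert\dot z\vert^2$ are both $O(\vert\dot z\vert^2) = O(\psi^2)$ and so absorbed into the error.

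Combining these estimates, $(\psi\circ\sigma)''(0) = 4\vert\psi(\sigma(0))\vert + o(\vert\psi(\sigma(0))\vert)$, which is strictly positive once $\vert\psi(\sigma(0))\vert$ is small enough, giving the desired $\ep$. The subtle point I anticipate as the main obstacle is uniformity: Proposition~\ref{asymp} is stated as a pointwise limit as $z\to p$, whereas the proposition requires a single $\ep>0$ valid for every boundary-approaching geodesic. This is handled by covering the compact boundary $b\Om$ by finitely many neighbourhoods in which the local defining function and the scaling construction behind Proposition~\ref{stab-S_j} depend continuously on the base point; the derivatives $\pa^{k\ov l}s_{\Om_j}$ then converge uniformly in $p$, and the error estimates above become uniform. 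Once this uniformity is in hand, the $\ep$ in the statement can be chosen independently of the geodesic $\sigma$.
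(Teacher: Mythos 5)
Your computation is correct and rests on exactly the same ingredients as the paper's proof: the geodesic equation $\ddot z=-2(\log m_\Om)_z\,\dot z^2$, the observation that $(\psi\circ\sigma)'(0)=0$ forces $\psi_z(\sigma(0))\dot z(0)$ to be purely imaginary, and the asymptotics $(\log m_\Om)_z\sim \psi_z(p)/(-\psi)$ extracted from Proposition~\ref{asymp}. The difference is in the logical packaging. You argue directly and obtain the quantitative statement $(\psi\circ\sigma)''(0)=4\vert\psi(\sigma(0))\vert+o(\vert\psi(\sigma(0))\vert)$, which is more informative than what the paper records, but it forces you to confront the uniformity of the error term over all boundary base points --- the one place where your argument is only sketched (``the scaling construction depends continuously on the base point''). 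The paper instead argues by contradiction: it takes a sequence of putative counterexample geodesics $\sigma_k$ with $\psi(\sigma_k(0))>-1/k$ and $(\psi\circ\sigma_k)''(0)\le 0$, passes to a subsequence with $\sigma_k(0)\to p_0\in b\Om$ and normalized velocities $v_k\to v_0$, and then applies Proposition~\ref{asymp} at the single limit point $p_0$ to conclude $\Re(v_0^2)\ge 0$, contradicting $v_0=\pm i$. That subsequence extraction is precisely the clean, standard way to convert the pointwise boundary asymptotics into the uniform statement you need, and it avoids having to make the continuous-dependence claim precise; if you want to keep your direct formulation, the most economical way to justify the uniformity is to run exactly this compactness argument on a hypothetical sequence violating your estimate. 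With that step filled in, your proof is complete and yields the same $\ep$ as the paper's.
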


\begin{proof}
Suppose to the contrary, for each $k\in\mathbb{N}$, there exists a geodesic $\sigma_{k}$ of $m_{\Om}(z) \vert dz\vert$ satisfying
\[
\text{(a)} \, \psi\big(\sigma_{k}(0)\big)> -\frac{1}{k},\quad \text{(b)} \,(\psi\circ \sigma_{k})'(0)=0, \quad \text{and} \quad \text{(c)} \,(\psi\circ \sigma_{k})''(0)\leq 0.
\]
Let us denote for each $k$,
\[
p_{k}=\sigma_{k}(0), \quad v_{k}=\frac{\sigma_{k}'(0)}{\big\vert \sigma_{k}'(0)\big\vert}, \quad \text{and} \quad b_{k}=\frac{(\psi\circ \sigma_{k})''(0)}{\big \vert \sigma_{k}'(0)\big \vert^{2}}\leq 0.
\]
By passing to the subsequence, let  $p_{k}\to p_{0}$ and $v_{k}\to v_{0}$ where $|v_{0}|=1$. By a translation and rotation of $D$, we may assume that
\[
p_{0}=0\text{ and }\frac{\partial\psi(0)}{\partial z}=1.
\]
Then from (b) we have
\begin{equation}\label{re-v_0}
 0=\lim_{k\to\infty}\text{Re}\left(\frac{\pa\psi(p_{k})}{\pa z}v_{k}\right)=\text{Re}\left(\frac{\partial \psi(0)}{\partial z}v_{0}\right)= \text{Re}(v_{0}).
 \end{equation}
On the other hand, from (c) we have
\begin{align}\label{s1}
\text{Re}\left(\frac{\partial \psi(p_{k})}{\partial z}\sigma_{k}''(0)\right)+\text{Re}\left(\frac{\partial^{2}\psi(p_{k})}{\partial z^{2}}\big(\sigma_{k}'(0)\big)^{2}\right)+\frac{\partial^{2}\psi(p_{k})}{\partial z\partial\ov{z}}\big \vert \sigma_{k}'(0)\big \vert^{2} \leq 0.
\end{align}
Since $\sigma_k$ is a geodesic of the metric $m_{\Om}^2 \vert dz\vert^2$, we have
\begin{equation}\label{geo-eq}
-\sigma_{k}''=\frac{1}{m_{\Om}^2(\sigma_{k})}\frac{\partial m_{\Om}^2(\sigma_{k})}{\partial z}(\sigma_{k}')^{2}.
\end{equation}
Thus, combining \eqref{s1} and \eqref{geo-eq}, we obtain
\[
-\Re\left(\frac{\pa \psi}{\pa z}(p_k)\frac{1}{m_{\Om}^2(p_k)}\frac{\partial m_{\Om}^2(p_{k})}{\partial z}\big(\sigma_{k}'(0)\big)^{2}\right) + \text{Re}\left(\frac{\partial^{2}\psi(p_{k})}{\partial z^{2}}\big(\sigma_{k}'(0)\big)^{2}\right)+\frac{\partial^{2}\psi(p_{k})}{\partial z\partial\ov{z}}\big \vert \sigma_{k}'(0)\big \vert^{2} \leq 0.
\]
Dividing throughout by $|\sigma_{k}'(0)|^{2}$ and multiplying by $-\psi(p_{k})$, we have
\begin{multline}\label{2nd-der-eq}
-\Re\left(\frac{\pa \psi}{\pa z}(p_k)\big(-\psi(p_k)\big)\frac{1}{m_{\Om}^2(p_k)}\frac{\partial m_{\Om}^2(p_{k})}{\partial z}v_k^{2}\right) + \big(-\psi(p_k)\big)\text{Re}\left(\frac{\partial^{2}\psi(p_{k})}{\partial z^{2}}\big(\sigma_{k}'(0)\big)^{2}\right)\\+\big(-\psi(p_k)\big)\frac{\partial^{2}\psi\big(p_{k}\big)}{\partial z\partial\ov{z}}\big \vert \sigma_{k}'(0)\big \vert^{2} \leq 0.
\end{multline}
Observe that the last two terms converge to $0$ as $k \to \infty$. On the other hand, by Proposition~\ref{asymp},
\[
\big(-\psi(p_k)\big)\frac{1}{m_{\Om}^2(p_k)}\frac{\partial m_{\Om}^2(p_{k})}{\partial z} = \frac{1}{m_{\Om}^2(p_k)(-\psi(p_k))^2}\frac{\partial m_{\Om}^2(p_{k})}{\partial z}(-\psi(p_k))^3 \to 2.
\]
By taking $k \to \infty$ in \eqref{2nd-der-eq}, it follows that
\[
\Re\left(v_0^2\right) \geq 0.
\]
which is a contradiction as $v_0 =\pm i$ from \eqref{re-v_0}.
 \end{proof}

We are now in a position to give a proof of Theorem~1.1 (ii).

\begin{proof}[Proof of Theorem~1.1 (ii)]
Without loss of generality, assume that $\Om$ is $C^{\infty}$-smoothly bounded. Let $z_{0}\in \Om$ be such that no closed geodesics passes through it. Let $\psi$ be a $C^{\infty}$-smooth defining function for $\Om$ and let $\ep>0$ be as in Proposition~\ref{spiral}. Let $\epsilon_{1}=\min\{\epsilon,~-\psi(z_{0})\}$. Set
\[
K=\big\{z\in \Om: \psi(z)\leq -\epsilon_{1} \big\}.
\]
Then the compact set $K$ has the property as in Lemma~\ref{Lem-Her-cpt}. Indeed, let $\sigma |_{[t_{1},t_{2}]}: [t_{1},t_{2}]\to \Om$ be a geodesic loop that passes through $z_{0}$ and suppose that $\sigma |_{[t_{1},t_{2}]}([t_{1},t_{2}])\not\subset K$. Since $(\psi\circ \sigma)|_{[t_{1},t_{2}]}$ is a continuous real-valued function, it will attain maximum at some point $t_{0}\in (t_{1}, t_{2})$. From the definition of $K$ and the fact $\sigma|_{[t_{1},t_{2}]}$ leaves $K$ implies that $\sigma(t_{0})\in \Om\setminus {K}$, which further implies that $(\psi\circ \sigma)(t_{0})>-\epsilon,~(\psi\circ \sigma)'(t_{0})=0$, and $(\psi\circ \sigma)''(t_{0}) \leq 0$. But it contradicts the above proposition. By Lemma~\ref{Lem-Her-cpt}, the proof of (ii) follows.
\end{proof}

\section{Proof of Theorem~\ref{bdry curv}}
First, assume that $\Om$ is $C^{\infty}$-smoothly bounded and $p \in b\Om$.  By Proposition~\ref{asymp} and recalling that $c_{\Om}(z)=2\pi S_{\Om}(z)$, we have for $k,l\geq 0$,
\[
\pa^{k\ov{l}}m_{\Om}(z)\big(-\psi(z)\big)^{k+l+1} \to \pa^{k\ov{l}} m_{\mathcal{H}}(0),
\]
as $z \to p$. Therefore, denoting the symmetric group over $\{0, \ldots, N+1\}$ by $S_{N+1}$,
\begin{align*}
 \det \begin{pmatrix}\pa^{k\ov l} m_{\Om}(z)\end{pmatrix}_{k,l=0}^N\big(-\psi(z)\big)^{(N+1)^2}
 &=\sum_{\sigma\in S_{N+1}}\text{sign}(\sigma)\prod_{k=0}^{N}\left(\pa^{k\ov {\sigma(k)}} m_{\Om}(z) \big(-\psi(z)\big)^{k+\sigma(k)+1}\right)\\
 & \to \sum_{\sigma\in S_{N+1}}\text{sign}(\sigma)\prod_{k=0}^{N}\left(\pa^{k\ov {\sigma(k)}} m_{\mathcal{H}}(0) \right)\\
 & = \det \begin{pmatrix}\pa^{k\ov l} m_{\mathcal{H}}(0)\end{pmatrix}_{k,l=0}^N.
 \end{align*}
This implies that
\begin{align*}
\kappa_{\Om}^{(N)}(z)&=-4\frac{ \det \begin{pmatrix}\pa^{k\ov l} m_{\Om}(z)\end{pmatrix}_{j,k=0}^N\big(-\psi(z)\big)^{(N+1)^2}} {m_{\Om}^{(N+1)^2}(z)\big(-\psi(z)\big)^{(N+1)^2}}
\to -4\frac{\det \begin{pmatrix}\pa^{k\ov l} m_{\mathcal{H}}(0)\end{pmatrix}_{k,l=0}^N}{m^{(N+1)^2}_{\mathcal{H}}(0)} = \kappa_{m_{\mathcal{H}}}^{(N)}(0),
\end{align*}
as $z \to p$. Recall from \eqref{h-ord-curv-hp} that right-hand side is $-4(\prod_{m=1}^{N}m!)^2$ and the proof is complete when $\Om$ is $C^{\infty}$-smoothly bounded.

For the general case, let $\phi: \Om \to \Om'$ be a conformal equivalence where $\Om' \subset \mf{C}$ is a $C^{\infty}$-smoothly bounded domain. Let $p \in b \Om$ and $(z_j)_{j \geq 1}$ be a sequence in $\Om$ such that $z_j \to p$. Since the sequence $(\phi(z_j))_{j \geq 1}$ is bounded, it has a convergent subsequence, say $(\phi(z_{j_k}))_{k\geq 1}$. Note that the limit of this subsequence must lie in $b\Om'$. It follows from the previous case that
\[
\kappa_{m_{\Om}}^{(N)}(z_{j_k})=\kappa_{m_{\Om'}}^{(N)}\big(\phi(z_{j_k})\big) \to -4\left(\prod_{m=1}^{N}m! \right)^2.
\]
Thus, we have shown that every sequence in $\Om$ converging to $p$ admits a subsequence along which $\kappa_{m_\Om}^{(N)}$ converges to $-4(\prod_{m=1}^{N}m!)^2$. It follows that $\kappa_{m_{\Om}}^{(N)}(z) \to -4(\prod_{m=1}^{N}m! )^2$ as $z \to p$. This completes the proof of the theorem. \hfill \qed

\section{Proof of Theorem~\ref{l2-co}}
Without loss of generality, we may assume that $\Om$ is $C^{\infty}$-smoothly bounded. Also, we will prove the theorem for $m_{\Om}=s_{\Om}$ only as the case $m_{\Om}=c_{\Om}$ is similar. Fix a $C^{\infty}$-smooth defining function $\psi$ for $\Om$.

\textbf{Case $p+q \neq 1$.} Since $\mathcal{H}^0_2(\Om)\cong \mathcal{H}^2_2(\Om)$, it is enough to prove that every square integrable harmonic function on $\Om$ with respect to $ds_{s_{\Om}} = s_{\Om}(z) \vert dz \vert$ is identically equal to $0$. Let $f$ be such a function. First, note that $f$ must be constant since $ds$ is complete and K\"{a}hler (see for instance \cite{y76}). Also, by Proposition~\ref{asymp}, $\Om$ has infinite volume with respect to $ds_{s_{\Om}}$:
\[
\int_{\Om} \frac{i}{2} s^2_{\Om}(z) \, dz \wedge d\ov z \gtrsim \int_{\Om} \frac{1}{\big(\psi(z)\big)^2} dA(z) = +\infty,
\]
which implies that $f$ must be identically equal to $0$.

\textbf{Case $p+q=1$.} By \cite{o89}, the infinite dimensionality of $\mathcal{H}^{p,q}_2(\Om)$ will follow at once if we establish that
\begin{equation}\label{oh-eq}
ds_{s_{\Om}}^2 \approx (-\psi)^{-1} \vert dz\vert^2 + (-\psi)^{-2}\vert \pa \psi\vert^2 \vert dz\vert^2,
\end{equation}
uniformly near $b\Om$. If $p \in b \Om$ then by Proposition~\ref{asymp}, 
\[
\lim_{z \to p} \frac{s_{\Om}(z)^2}{\big(-\psi(z)\big)^{-1} + \big(-\psi(z)\big)^{-2} \big\vert \pa \psi(z)\big\vert^2}
= \lim_{z\to p} \frac{\big(-\psi(z)\big)^2 s^2_{\Om}(z)}{ -\psi(z) + \big\vert \pa \psi (z) \big\vert^2 }= \frac{\big\vert \pa \psi (p) \big\vert^2}{\big\vert \pa \psi (p) \big\vert^2 },
\]
which shows that \eqref{oh-eq} holds near $p$. By compactness of $b\Om$, it follows that \eqref{oh-eq} holds near $b\Om$ which completes the proof. \hfill \qed

\section{Proof of Proposition~\ref{sgo ann}}
We begin by recalling the series form of the Szeg\"o kernel on the annulus
\[
A_r=\big\{z \in \mf{C} : r< \vert z \vert <1\big\},
\]
where $r \in (0,1)$. An orthonormal basis for $H^2(bA_r)$ is given by
\[
\left\{\frac{z^n}{\sqrt{2\pi(1+r^{2n+1})}}\right\}_{n=-\infty}^{\infty},
\]
and hence the Szeg\"o kernel for $A_r$ is
\[
S_{A_r}(z,w)=\frac{1}{2\pi} \sum_{n=-\infty}^{\infty} \frac{(z\ov w)^n}{1+r^{2n+1}}.
\]
To find a closed form of the Szeg\"o metric, we first recall the definitions of the Weierstrass elliptic functions following the notations in \cite{a05} and \cite{m35}. For more details on the theory of elliptic functions, we refer to \cite{WW}. Let $\om_1= -\log r$, $\om_3=i\pi$, and write $\Om_{m,n}=2m\om_1+2n\om_3$ for $m,n \in \mf{Z}$. The Weierstrass elliptic $\wp$ function is defined by
\[
\wp(z)=\frac{1}{z^2}+{\sum_{m,n}}^\prime \left( \frac{1}{(z-\Om_{m,n})^{2}}-\frac{1}{\Om_{m,n}^{2}}\right),
\]
where the $'$ in the summation means that simultaneous zero values of $m,n$ are excepted. The function $\wp$ is holomorphic on $\mathbb{C}$ except for poles at $\Om_{m,n}$ for each $m,n \in \mf{Z}$. Moreover, it is an even function, it is doubly periodic with periods $2\om_1$ and $2\om_3$, and satisfies the differential equation
\[
(\wp')^2=4\wp^3-g_2\wp -g_3,
\]
where
\[
g_2=60 {\sum_{m,n}}'\frac{1}{\Om_{m,n}^4}, \quad g_3=140{\sum_{m,n}}'\frac{1}{\Om_{m,n}^6}.
\]
We also note that the roots of the equation $4x^3-g_2x-g_3$ are given by
\begin{equation}\label{e_i}
e_1=\wp(\om_1), \quad e_2=\wp(-\om_1-\om_3), \quad e_3=\wp(\om_3),
\end{equation}
which are real and satisfies $e_1>e_2>e_3$. Along the boundary of the half-period rectangle with vertices $0, \om_1, \om_1+\om_3, \om_3$, the function $\wp(z)$ is real, and as $z$ is taken counterclockwise around this rectangle from $0 \to \om_1 \to \om_1+\om_3\to \om_3 \to 0$, $\wp(u)$ decreases from $+\infty \to e_1\to e_2 \to e_3\to -\infty$. The behaviour of $\wp$ along the boundaries of the other half-period rectangles can be obtained from this using $p(-z)=p(z)$, $p(\ov z)=\ov p(z)$, and periodicity.

The Weierstrass $\zeta$ function is defined by
\begin{align*}
& \zeta'(z)=-\wp(z),\\
& \lim_{z \to 0} \big(\zeta(z)-z^{-1}\big)=0.
\end{align*}
The function $\zeta$ is odd and satisfies the quasi-periodicity condition
\[
\zeta(z +2 \om_k)=\zeta(z)+2\eta_k,
\]
for $k=1,3$, where $\eta_k=\zeta(\om_k)$.

Finally, the Weierstrass $\sigma$ function is defined by the equation
\begin{align*}
& \frac{d}{dz} \log \sigma(z)=\zeta(z),\\
&\lim_{z \to 0} \sigma(z)/z=1.
\end{align*}
The function $\sigma$ is odd and satisfies the quasi-periodicity condition
\[
\sigma(z+2\om_k)=-e^{2\eta_k(z+\om_k)}\sigma(z),
\]
for $k=1,3$. For real $u$, the functions $\wp(u)$, $\sigma(u)$, and $\zeta(u)$ take real values since $\omega_{1}$ is positive and $\omega_{2}$ is purely imaginary.

It is customary to write $\om_2=-\om_1-\om_3$, $\eta_2=-\eta_1-\eta_3$, and define three other $\sigma$ functions $\sigma_k(u)$, $k=1,2,3$, by the equation
\begin{equation}\label{sigma2}
\sigma_k(u)=e^{-\eta_k u}\frac{\sigma(u+\om_k)}{\sigma(\om_k)}.
\end{equation}

We also recall a relation between the Szeg\"o kernel and the capacity metric from \cite[Equation (7)]{a05}:
\begin{align}
2\pi S_{A_r}(z) & = c_{\beta}(z)\sigma_{2}^{*}\big(-2\log|z|\big)\label{s2}, 
\end{align}
where $c_{\beta}$ denotes the logarithmic capacity on $A_{r}$, and
\begin{equation}\label{sigma*}
{ \sigma_2^{*}}^2(u)=e^{-cu^2}\sigma_2^2(u),
\end{equation}
where $c=\eta_1/\om_1$.

\begin{proof}[Proof of Proposition~\ref{sgo ann}]
Differentiating \eqref{s2}, we obtain
\begin{equation*}
\partial\ov{\partial}\log S_{A_r}(z)=\partial\ov{\partial}\log c_{\beta}(z)+\partial\ov{\partial}\log\Big(\sigma_{2}^{*}\big(-2\ln|z|\big)\Big).
\end{equation*}
From, Suita \cites{s72}, we know that
\begin{equation}\label{capa}
\partial\ov{\partial}\big(\ln c_{\beta}(z)\big)=\pi K_{A_{r}}(z)=\frac{1}{|z|^{2}}\Big(\wp\big(2\ln \vert z\vert \big)+c\Big).
\end{equation}
The latter expression was originally found by Zarankiewicz \cite{Zara34}. On the other hand, writing $u=-2\log \vert z \vert$, which is harmonic, we obtain from \eqref{sigma*}
and \eqref{sigma2}, that
\[
\pa \ov {\pa} \log \big({\sigma_2^{*}}^2(u)\big)= \pa \ov {\pa}(-cu^2)  + \pa \ov \pa \log \sigma^2(u+\om_2).
\]
Note that
\[
\pa \ov \pa (-cu^2)=-c \pa (2u\ov \pa u)=-2c \vert \pa u\vert^2 =-\frac{2c}{\vert z \vert^2},
\]
and
\begin{align*}
\pa \ov \pa \log \sigma^2(u+\om_2)&=2\pa \left(\frac{d}{du}\big(\log \sigma(u+\om_2)\big)\ov \pa u\right)\\
&= 2\pa \big( \zeta(u+\om_2)\ov \pa u\big)\\
&= 2 \frac{d}{du} \big(\zeta(u+\om_2)\big) \vert \pa u \vert^2\\
& = -2\wp(u+\om_2)\frac{1}{\vert z \vert^2}.
\end{align*}
Thus,
\begin{equation}\label{sig*}
\partial\ov{\partial}\log\Big(\sigma_{2}^{*}\big(-2\log|z|\big)\Big) = -\frac{\wp\big(-2\log \vert z \vert+\om_2\big)+c}{\vert z \vert^2}=-\frac{\wp\big(2\log \vert z \vert+\om_1+\om_3\big)+c}{\vert z \vert^2}.
\end{equation}
Adding \eqref{capa} and \eqref{sig*}, we obtain
\[
\pa \ov \pa \log S_{A_r}(z)=\frac{\wp\big(2\log \vert z \vert\big)-\wp\big(2\log \vert z \vert+\om_1+\om_3\big) }{\vert z \vert^2},
\]
as required.
\end{proof}

\section{Variations on planar annuli}
In this section, we shall present the proof of Theorem~\ref{variation}. The idea is to use the Bergman-Fuks type formulas for the Szeg\"o kernel and the metric that relates them to certain maximal domain functions. Let us recall their definitions first. For a $C^{\infty}$-smoothly bounded domain $\Om \subset \mf{C}$, $j=0,1,2, \cdots$, and $z \in \Om$, we define the maximal domains functions $J_{\Om}^{(j)}$ by
\[
J_{\Om}^{(j)}(z)=\sup\Big\{\big \vert f^{(j)} (z)\big \vert^{2}: f\in H^{2}(b\Om), f(z)=f'(z)=\cdots = f^{(j-1)}(z)=0, \|f\|_{H^{2}(b\Om)}\leq 1\Big\}.
\]
It can be shown that
\begin{equation}\label{berg-fuks-formula}
\begin{aligned}
S_{\Om}(z) & =J^{(0)}(z),\\
s_{\Om}(z) & =\sqrt{\frac{J_{\Om}^{(1)}(z)}{J_{\Om}^{(0)}(z)}},\\
\kappa_{s_\Om(z)} & =4-2\frac{J_{\Om}^{(0)}(z)J_{\Om}^{(2)}(z)}{J_{\Om}^{(1)}(z)^2}.
\end{aligned}
\end{equation}
These formulas can be derived in the same way as for the usual Bergman kernel and the metric (see for example, \cite{Ber33}, \cite{bg70}, \cite{Fuks}, and \cite{jp13}), and so we do not repeat them here. Observe from the last formula that the Gaussian curvature of the Szeg\"o metric is at most $4$ on a $C^{\infty}$-smoothly bounded domain and hence on any non-degenerate finitely connected domain. Under a conformal equivalence $\phi: \Om \to \Om'$, the maximal domain functions $J_{\Om}^{(j)}(z)$, $j=0,1,2, \ldots$, transform according to the following rule
\begin{equation}\label{tr Js}
J_{\Om}^{(j)}(z)=\big \vert \phi'(z)\big \vert^{2j+1}J_{\Om'}^{(j)}\big(\phi(z)\big).
\end{equation}
Because of \eqref{berg-fuks-formula}, the focus now is to investigate the maximal domain functions on $A_r$. As a first step, we compute these domain functions on a general annulus
\[
A(r, R)=\big\{z\in\mf{C}: r<|z|<R\big\},
\]
where $0<r<1<R<\infty$. It is apparent from their definition that we will require suitable functions in $H^2(bA(r,R))$ to compute them and so first we prove a result that allows us to construct plenty of them. Note that $(z^n)_{n=-\infty}^{\infty}$ is an orthogonal basis for $H^2(bA(r,R))$ and writing $\al_{n}^{r,R}=\|z^n\|_{H^2(bA(r,R))}^2$, we have
\begin{equation}\label{H-norm-z^n}
\al_{n}^{r,R}= \int_{\vert z \vert =r}\vert z \vert^{2n} \vert dz\vert +\int_{\vert z \vert=R} \vert z \vert^{2n} \vert dz\vert = 2\pi\big(r^{2n+1}+R^{2n+1}\big).
\end{equation}
\begin{prop}\label{al_n-summable}
Let $p$ be a monic polynomial and
\[
c_n= \frac{p(n)}{\al_n^{r,R}}, \quad n \in \mf{Z}.
\]
Then $(c_n)_{-\infty}^{\infty}$ is summable, and hence also square summable. In particular,
\[
f(z)=\sum_{n=-\infty}^{\infty} c_n z^n,
\]
is in $H^2\big(bA(r,R)\big)$.
\end{prop}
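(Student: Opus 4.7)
The plan is to exploit the fact that $\al_n^{r,R}$ grows exponentially fast as $|n|\to\infty$, while $p(n)$ grows only polynomially. This should force $c_n=p(n)/\al_n^{r,R}$ to decay geometrically in both tails of $\mf{Z}$, which will give both absolute summability of $(c_n)$ and convergence of $\sum |c_n|^2 \al_n^{r,R}$ (i.e.\ $H^2$-membership of $f$) essentially in one stroke.

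First I would isolate the dominant term of $\al_n^{r,R}=2\pi(r^{2n+1}+R^{2n+1})$ according to the sign of $n$. Since $0<r<1<R$, for $n$ large positive one has $R^{2n+1}\to\infty$ while $r^{2n+1}\to 0$, and so $\al_n^{r,R}\geq 2\pi R^{2n+1}$; for $n$ large negative one has $r^{2n+1}\to\infty$ while $R^{2n+1}\to 0$, and so $\al_n^{r,R}\geq 2\pi r^{2n+1}$. Combined with the elementary bound $|p(n)|=O\big(|n|^{\deg p}\big)$, these will yield
\[
|c_n|\leq \frac{|p(n)|}{2\pi R^{2n+1}}\ \text{ as }n\to+\infty,\qquad |c_n|\leq \frac{|p(n)|\,r^{-2n-1}}{2\pi}\ \text{ as }n\to-\infty,
\]
each a polynomial in $n$ times a pure geometric decay factor. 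Absolute summability of $(c_n)_{n\in\mf{Z}}$ follows immediately, and square summability is then automatic.

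For the final assertion, since $(z^n)_{n\in\mf{Z}}$ is an \emph{orthogonal} basis of $H^2(bA(r,R))$ with $\|z^n\|^2_{H^2(bA(r,R))}=\al_n^{r,R}$, the partial sums $f_N(z)=\sum_{|n|\leq N} c_n z^n$ are Cauchy in $H^2(bA(r,R))$ if and only if $\sum_{n\in\mf{Z}}|c_n|^2\al_n^{r,R}<\infty$. Writing $|c_n|^2\al_n^{r,R}=|p(n)|^2/\al_n^{r,R}$ and applying the same two-sided asymptotic bounds on $\al_n^{r,R}$ will show this sum converges, so $f\in H^2(bA(r,R))$ as required. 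The whole argument reduces to comparing polynomial growth with exponential decay, so I anticipate no serious obstacle; the only mild point is remembering to treat $n\to+\infty$ and $n\to-\infty$ separately because the relevant geometric ratio differs ($R$ on one side, $1/r$ on the other).
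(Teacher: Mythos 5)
Your proposal is correct and takes essentially the same approach as the paper: both arguments rest on the fact that $\al_n^{r,R}=2\pi(r^{2n+1}+R^{2n+1})$ grows geometrically in each tail (like $R^{2n}$ as $n\to+\infty$ and like $r^{2n}$ as $n\to-\infty$) while $p(n)$ grows only polynomially, the paper packaging this as a ratio test with limits $1/R^2<1$ and $r^2<1$ where you use direct geometric domination. If anything, your explicit verification that $\sum_n |c_n|^2\al_n^{r,R}=\sum_n |p(n)|^2/\al_n^{r,R}<\infty$ is slightly more careful than the paper's ``in particular,'' which leaves that step implicit.
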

\begin{proof}
Let $n\geq0$ be large. Then $p(\pm n) \neq 0$ and
\[
\frac{\vert p(n+1)\vert}{\vert p(n)\vert} \to 1 \quad \text{and} \quad \frac{\vert p(-n-1)\vert}{\vert p(-n)\vert} \to 1.
\]
Now,
\begin{align*}
\left \vert \frac{c_{n+1}}{c_n} \right \vert & =\frac{\vert p(n+1)\vert }{\vert p(n)\vert}\frac{\al_n^{r,R}}{\al_{n+1}^{r,R}}\\
& = \frac{\vert p(n+1)\vert }{\vert p(n)\vert}\frac{r^{2n+1}+R^{2n+1}}{r^{2n+3}+R^{2n+3}}\\
& = \frac{\vert p(n+1)\vert }{\vert p(n)\vert}\frac{R^{2n+1}\big((r/R)^{2n+1} +1\big)}{R^{2n+3}\big((r/R)^{2n+3}+1\big)}\\
& \to 1/R^2<1.
\end{align*}
Similarly,
\begin{align*}
\left \vert \frac{c_{-n-1}}{c_{-n}} \right\vert & =\frac{\vert p(-n-1)\vert}{\vert p(-n)\vert} \frac{\al_{-n}^{r,R}}{\al_{-n-1}^{r,R}}\\
&  = \frac{\vert p(-n-1)\vert}{\vert p(-n)\vert} \frac{r^{-2n+1}+R^{-2n+1}}{r^{-2n-1}+R^{-2n-1}} \\
& =\frac{\vert p(-n-1)\vert}{\vert p(-n)\vert} \frac{r^{-2n+1}\big(1+(R/r)^{-2n+1}\big)}{r^{-2n-1}\big(1+(R/r)^{-2n-1}\big)}\\
& \to r^2<1.
\end{align*}
It follows from the ratio test that $(c_n)_{n=-\infty}^{\infty}$ is summable.
\end{proof}
Now, to compute the maximal domain functions on $A(r,R)$, let us consider the series
\begin{equation}\label{phi_k}
s^{r,R}_k=\sum_{n=-\infty}^{\infty} \frac{n^k}{\al_n^{r,R}}, \quad k=0,1,2, \ldots,
\end{equation}
which are finite by Proposition~\ref{al_n-summable}.

\begin{prop}\label{J-j(1)}
We have
\[
\text{\em (a)} \, J^{(0)}_{A(r,R)}(1)=s^{r,R}_0,  \quad \text{\em (b)}\,J^{(1)}_{A(r,R)}(1) = \frac{s^{r,R}_0s^{r,R}_2-(s^{r,R}_1)^2}{s^{r,R}_0}, 
\]
and
\[\text{\em (c)}\,
J_{A(r,R)}^{(2)}(1)=\frac{(s^{r,R}_1)^2 s^{r,R}_4 - s^{r,R}_0s^{r,R}_2 s^{r,R}_4 -2 s^{r,R}_1s^{r,R}_2s^{r,R}_3 +s^{r,R}_0(s^{r,R}_3)^2+(s^{r,R}_2)^3} {(s^{r,R}_1)^2-s^{r,R}_0 s^{r,R}_2}.
\]
\end{prop}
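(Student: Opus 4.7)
The strategy is to work with the orthogonal basis $(z^n)_{n\in\mathbb{Z}}$ coming from \eqref{H-norm-z^n}, pass to the orthonormal basis $\phi_n(z)=z^n/\sqrt{\alpha_n^{r,R}}$ for $H^2(bA(r,R))$, and reformulate each $J^{(j)}_{A(r,R)}(1)$ as a Hilbert-space extremal problem on $\ell^2(\mathbb{Z})$. Writing $f=\sum_n b_n\phi_n$ with $\sum|b_n|^2\le 1$, term-by-term differentiation and evaluation at $z=1$ gives
\[
f^{(j)}(1)=\sum_n\frac{n(n-1)\cdots(n-j+1)}{\sqrt{\alpha_n^{r,R}}}\,b_n=\langle b,w^{(j)}\rangle_{\ell^2},
\]
where $w^{(j)}_n:=n(n-1)\cdots(n-j+1)/\sqrt{\alpha_n^{r,R}}$. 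Consequently $J^{(j)}_{A(r,R)}(1)$ is the supremum of $|\langle b,w^{(j)}\rangle|^2$ over unit vectors $b\in\ell^2(\mathbb{Z})$ that are orthogonal to $w^{(0)},\ldots,w^{(j-1)}$, which by standard Hilbert-space theory equals the squared length of the orthogonal projection of $w^{(j)}$ onto the orthogonal complement of $\mathrm{span}(w^{(0)},\ldots,w^{(j-1)})$.

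Each inner product $\langle w^{(j)},w^{(k)}\rangle$ is a sum $\sum_n P(n)/\alpha_n^{r,R}$ with $P$ a polynomial of degree $j+k$, which by Proposition~\ref{al_n-summable} converges and reduces to a linear combination of the $s^{r,R}_\ell$. Expanding the falling factorials and writing $s_\ell$ for $s^{r,R}_\ell$, one reads off
\[
\|w^{(0)}\|^2=s_0,\quad \langle w^{(1)},w^{(0)}\rangle=s_1,\quad \|w^{(1)}\|^2=s_2,
\]
\[
\langle w^{(2)},w^{(0)}\rangle=s_2-s_1,\quad \langle w^{(2)},w^{(1)}\rangle=s_3-s_2,\quad \|w^{(2)}\|^2=s_4-2s_3+s_2.
\]
For $j=0$ the supremum is Cauchy--Schwarz and yields (a). For $j=1,2$ the classical Gram-determinant formula
\[
J^{(j)}_{A(r,R)}(1)=\frac{\det G(w^{(0)},\ldots,w^{(j)})}{\det G(w^{(0)},\ldots,w^{(j-1)})},
\]
with $G(\cdot)$ the Gram matrix, produces (b) immediately when $j=1$.

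The one substantive computational step, and the main potential source of bookkeeping errors, is expanding the $3\times 3$ Gram determinant in the case $j=2$ and simplifying. Cofactor expansion along the first row produces several cross-terms, but those involving $s_1^2 s_3$ and $s_1 s_2^2$ each cancel in pairs, leaving $s_0s_2s_4-s_1^2s_4+2s_1s_2s_3-s_0s_3^2-s_2^3$. Dividing by $\det G(w^{(0)},w^{(1)})=s_0s_2-s_1^2$ and factoring $-1$ from both numerator and denominator produces the expression displayed in (c). Finally, to ensure the supremum is actually attained in each case, one observes that the extremal $b$ is proportional to the projection of $w^{(j)}$ onto the relevant orthogonal complement; its coordinates have the form $P(n)/\alpha_n^{r,R}$ for a polynomial $P$, so Proposition~\ref{al_n-summable} guarantees that this $b$ corresponds to a genuine element of $H^2(bA(r,R))$ satisfying the required vanishing conditions at $z=1$.
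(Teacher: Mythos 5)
Your proof is correct and arrives at the same formulas, but it is organized around a different key tool than the paper's argument. The paper treats each case by hand: for $J^{(1)}_{A(r,R)}(1)$ it writes $f'(1)=\sum(n-\beta)a_n$ using the constraint $\sum a_n=0$, applies Cauchy--Schwarz, and then chooses $\beta=s^{r,R}_1/s^{r,R}_0$ so that the explicit candidate $f_\beta(z)=\sum(n-\beta)z^n/\al_n^{r,R}$ satisfies the constraint and attains equality; for $J^{(2)}_{A(r,R)}(1)$ it repeats this with two parameters $\ga,\de$ determined by a $2\times2$ linear system whose solvability rests on $(s^{r,R}_1)^2-s^{r,R}_0s^{r,R}_2<0$. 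You package all of this into the single statement that $J^{(j)}_{A(r,R)}(1)$ is the squared distance in $\ell^2(\mf{Z})$ from $w^{(j)}$ to $\mathrm{span}(w^{(0)},\dots,w^{(j-1)})$, evaluated by the Gram-determinant ratio; the paper's $\beta$ and $(\ga,\de)$ are precisely the coefficients of the orthogonal projection you invoke. Your formulation buys uniformity (it yields $J^{(j)}$ for every $j$ with no new ideas) and makes attainment of the supremum automatic, while the paper's buys explicit extremal functions and avoids citing the Gram identity. Two points worth making explicit in your write-up: you need the linear independence of $w^{(0)},w^{(1)}$ (equivalently $s^{r,R}_0s^{r,R}_2-(s^{r,R}_1)^2\neq 0$, which follows from the strict Cauchy--Schwarz inequality, as the paper notes) so that the Gram denominator is nonzero; and the $3\times3$ determinant is evaluated most cleanly by replacing $w^{(2)}$ with $u^{(2)}_n=n^2/\sqrt{\al_n^{r,R}}$ --- a unipotent change of basis that leaves the Gram determinant unchanged and turns the matrix into the Hankel matrix $\big(s^{r,R}_{j+k}\big)_{j,k=0}^2$. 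Either way one gets $s^{r,R}_0s^{r,R}_2s^{r,R}_4-(s^{r,R}_1)^2s^{r,R}_4+2s^{r,R}_1s^{r,R}_2s^{r,R}_3-s^{r,R}_0(s^{r,R}_3)^2-(s^{r,R}_2)^3$, and your claimed cancellations of the $ (s^{r,R}_1)^2s^{r,R}_3$ and $s^{r,R}_1(s^{r,R}_2)^2$ cross-terms do check out, so your expression agrees with the paper's after multiplying numerator and denominator by $-1$.
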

\begin{proof}
Let $f\in H^2\big(bA(r,R)\big)$ and $f(z)=\sum_{n=-\infty}^{\infty} a_n z^n$. Then
\[
\|f\|^2_{H^2\big(bA(r,R)\big)}=\sum_{n=-\infty}^{\infty}\vert a_n\vert^2 \al_n^{r,R}.
\]
(a) By the Cauchy-Schwarz inequality
\begin{equation}\label{f(1)}
\big\vert f(1)\big\vert^2 =\left\vert \sum_{n=-\infty}^{\infty} a_n\right\vert^2 = \left\vert \sum_{n=-\infty}^{\infty} \frac{1}{\sqrt{\al_n^{r,R}}} \Big(a_n \sqrt{\al_n^{r,R}}\Big) \right\vert^2
 \leq  s^{r,R}_0 \|f\|^2_{H^2(bA(r,R))}.
\end{equation}
Since $f$ is arbitrary, this implies that
\[
J_{A(r,R)}^{(0)}(1) \leq s^{r,R}_0.
\]
Thus to complete the proof of (a), all we need to do is to produce a function in $H^2(bA(r,R))$ for which the inequality in \eqref{f(1)} is equality. In view of Proposition~\ref{al_n-summable},
\[
f_0(z)=\sum_{n=-\infty}^{\infty} \frac{z^n}{\al_n^{r,R}},
\]
is in $H^2(bA(r,R))$. Note that $\|f_0\|^2_{H^2(bA(r,R))}=s^{r,R}_0=f_0(1)$, and therefore,
\[
\big\vert f_0(1)\big\vert^2 =s^{r,R}_0 \|f_0\|^2_{H^2(bA(r,R))}.
\]
Thus, $f_0$ has the desired property and hence the proof of (a) follows.

(b) Assume that $f(1)=0$, so that $\sum_{n=-\infty}^{\infty} a_n=0$. Then for any $\be \in \mf{R}$,
\begin{multline}\label{f'(1)}
\big\vert f'(1)\big\vert^2 = \left \vert \sum_{n=-\infty}^{\infty} na_n\right\vert^2 = \left \vert \sum_{n=-\infty}^{\infty} (n-\be)a_n\right\vert^2\\ =  \left \vert \sum_{n=-\infty}^{\infty} \frac{n-\be}{\sqrt{\al_n^{r,R}}} \Big(a_n \sqrt{\al_n^{r,R}}\Big)\right\vert^2
 \leq \sum_{n=-\infty}^{\infty}\frac{(n-\be)^2}{\al_n^{r,R}} \|f\|_{H^2(bA(r,R))}^2,
\end{multline}
which shows that
\begin{equation}\label{J^1-ineq}
J^{(1)}_{A(r,R)}(1) \leq \sum_{n=-\infty}^{\infty}\frac{(n-\be)^2}{\al_n^{r,R}}.
\end{equation}
We now show that for a suitable $\be$, the above inequality is equality. It suffices to produce a function $f \in H^2(bA(r,R))$ such that $f(1)=0$, and for which the inequality in \eqref{f'(1)} is equality. Let $\be \in \mf{R}$ and set
\[
f_\beta(z)=\sum_{n=-\infty}^{\infty} \frac{n-\be}{\al_n^{r,R}}z^n.
\]
By Proposition~\ref{al_n-summable}, $f_\beta \in H^2(bA(r,R))$.  We choose $\be$ so that
\begin{equation}\label{f(1)=0}
f_\beta(1)=\sum_{n=-\infty}^{\infty} \frac{n-\be}{\al_n^{r,R}}=0.
\end{equation}
Indeed, we can take
\begin{equation}\label{beta}
\be=\frac{s^{r,R}_1}{s^{r,R}_0}.
\end{equation}
Then, by \eqref{f(1)=0},
\[
\|f_\beta\|_{H^2(bA(r,R))}^2= \sum_{n=-\infty}^{\infty} \frac{(n-\be)^2}{\al_n^{r,R}}=\sum_{n=-\infty}^{\infty} \frac{n(n-\be)}{\al_n^{r,R}}-\sum_{n=-\infty}^{\infty} \frac{\be(n-\be)}{\al_n^{r,R}}=\sum_{n=-\infty}^{\infty} \frac{n(n-\be)}{\al_n^{r,R}}=f_\beta'(1),
\]
from which it follows that
\[
\big\vert f_\beta'(1)\big\vert^2= \big\vert f_\beta'(1)\big\vert \|f_{\beta}\|^2_{H^2(bA(r,R))} = \sum_{n=-\infty}^{\infty}\frac{(n-\be)^2}{\al_n^{r,R}} \|f_\beta\|^2_{H^2(bA(r,R))},
\]
and hence $f_\beta$ has the desired property. It follows that
\[
J^{(1)}_{A(r,R)}(1)=\sum_{n=-\infty}^{\infty}\frac{(n-\be)^2}{\al_n^{r,R}}.
\]
Again, in view of \eqref{f(1)=0} and \eqref{beta}, we can write
\[
J_{A(r,R)}^{(1)}(1)=\sum_{n=-\infty}^{\infty}\frac{n(n-\be)}{\al_n^{r,R}} =s^{r,R}_2-\frac{s^{r,R}_1}{s^{r,R}_0} s^{r,R}_1 =\frac{s^{r,R}_0s^{r,R}_2-\big(s^{r,R}_1\big)^2}{s^{r,R}_0},
\]
which proves (b).

(c) Assume that $f(1)=f'(1)=0$ so that
\[
\sum_{n=-\infty}^{\infty} a_n = \sum_{n=-\infty}^{\infty} n a_n = 0.
\]
Then for any $\ga,\de \in \mf{R}$,
\begin{multline}\label{f''(1)}
\big \vert f''(1)\big\vert^2 = \left \vert\sum_{n=-\infty}^{\infty} n(n-1)a_n \right\vert^2 = \left \vert\sum_{n=-\infty}^{\infty} n^2a_n \right\vert^2= \left \vert\sum_{n=-\infty}^{\infty} (n^2-\ga n -\de) a_n \right\vert^2\\
= \left \vert\sum_{n=-\infty}^{\infty} \frac{n^2-\ga n -\de}{\sqrt{\al_n^{r,R}}} \Big( a_n \sqrt{\al_n^{r,R}}\Big)  \right\vert^2 \leq \left(\sum_{n=-\infty}^{\infty} \frac{(n^2-\ga n -\de)^2}{\al_n^{r,R}}\right) \|f\|^2_{H^2(bA(r,R))},
\end{multline}
which implies that
\begin{equation}\label{J^2-ineq}
J_{A(r,R)}^{(2)}(1) \leq \sum_{n=-\infty}^{\infty} \frac{(n^2-\ga n -\de)^2}{\al_n^{r,R}} .
\end{equation}
As before, we show that for a suitable choice of $\ga$ and $\de$, there is a function $f$ in $H^2(bA(r,R))$ satisfying $f(1)=f'(1)=0$, and for which the inequality in \eqref{f''(1)}  is equality. This would imply that the inequality \eqref{J^2-ineq} is also equality. For $\ga, \de \in \mf{R}$, set
\[
f_{\ga\de}(z)= \sum \frac{n^2-\ga n -\de}{\al_n^{r,R}}z^n.
\]
By Proposition~\ref{al_n-summable}, $f_{\ga\de}\in H^2(bA(r,R))$. We claim that there exist $\ga,\de$ such that

\begin{equation}\label{f=f'=0}
\begin{aligned}
f_{\ga\de}(1) &  =\sum_{n=-\infty}^{\infty} \frac{n^2-\ga n -\de}{\al_n^{r,R}}=0, \quad \text{and}\\
f'_{\ga\de}(1) & = \sum_{n=-\infty}^{\infty} \frac{n(n^2-\ga n -\de)}{\al_n^{r,R}}=0.
\end{aligned}
\end{equation}
The above equations can be written as
\begin{align*}
s^{r,R}_1 \ga + s^{r,R}_0 \de-s^{r,R}_2=0,\\
s^{r,R}_2 \ga + s^{r,R}_1 \de-s^{r,R}_3=0.
\end{align*}
Since,
\[
(s^{r,R}_1)^2-s^{r,R}_0s^{r,R}_2 =\left(\sum_{n=-\infty}^{\infty} \frac{n}{\al_n^{r,R}}\right)^2- \left(\sum_{n=-\infty}^{\infty} \frac{n^2}{\al_n^{r,R}}\right)\left(\sum_{n=-\infty}^{\infty} \frac{1}{\al_n^{r,R}}\right)<0,
\]
which is a consequence of the Cauchy-Schwarz inequality, the above system has a unique solution given by
\begin{equation}\label{ga-de}
\ga=\frac{s^{r,R}_1s^{r,R}_2-s^{r,R}_0s^{r,R}_3}{(s^{r,R}_1)^2-s^{r,R}_0s^{r,R}_2}, \quad \text{and} \quad \de=\frac{s^{r,R}_1s^{r,R}_3-(s^{r,R}_2)^2}{(s^{r,R}_1)^2-s^{r,R}_0 s^{r,R}_2  }.
\end{equation}
We now choose $\ga$ and $\de$ as above. Then, by \eqref{f=f'=0},
\[
\|f_{\ga\de}\|^2_{H^2(bA(r,R))}=\sum_{n=-\infty}^{\infty} \frac{(n^2-\ga n -\de)^2}{\al_n^{r,R}} =\sum_{n=-\infty}^{\infty} \frac{n^2(n^2-\ga n -\de)}{\al_n^{r,R}},
\]
and also
\[
f_{\ga\de}''(1)=\sum_{n=-\infty}^{\infty} \frac{n(n-1)(n^2-\ga n -\de)}{\al_n^{r,R}} =\sum_{n=-\infty}^{\infty} \frac{n^2(n^2-\ga n -\de)}{\al_n^{r,R}}.
\]
Therefore,
\[
\big \vert f_{\ga\de}''(1) \big \vert^2 = \big\vert f_{\ga\de}''(1) \big\vert \|f_{\ga\de}\|^2_{H^2(bA(r,R))},
\]
and thus $f_{\ga\de}$ has the desired property. It follows that
\[
J^{(2)}_{A(r,R)}(1)=\sum_{n=-\infty}^{\infty} \frac{(n^2-\ga n -\de)^2}{\al_n^{r,R}}.
\]
Again, in view of \eqref{f=f'=0} and \eqref{ga-de}, we can write
\begin{align*}
J^{(2)}_{A(r,R)}(1) &=\sum_{n=-\infty}^{\infty} \frac{n^2(n^2-\ga n -\de)}{\al_n^{r,R}}\\
& = s^{r,R}_4-\frac{s^{r,R}_1s^{r,R}_2-s^{r,R}_0s^{r,R}_3}{(s^{r,R}_1)^2-s^{r,R}_0s^{r,R}_2}\phi_3^{r,R}-\frac{s^{r,R}_1s^{r,R}_3-(s^{r,R}_2)^2}{(s^{r,R}_1)^2-s^{r,R}_0 s^{r,R}_2 } \phi_2^{r,R}\\
& = \frac{(s^{r,R}_1)^2 s^{r,R}_4 - s^{r,R}_0s^{r,R}_2 s^{r,R}_4 -2 s^{r,R}_1s^{r,R}_2s^{r,R}_3 +s^{r,R}_0(s^{r,R}_3)^2+(s^{r,R}_2)^3} {(s^{r,R}_1)^2-s^{r,R}_0 s^{r,R}_2},
\end{align*}
as required. This completes the proof of (c) and the proposition.
\end{proof}
We now focus on the annulus $A_r=\{z \in \mf{C}:0< \vert z \vert <1\}$. Note that by the transformation formula \eqref{tr Js}, we have
\begin{equation}\label{J_A_r-J_A}
r^{(2j+1)\lambda}J_{A_{r}}^{(j)}(r^{\lambda})=J_{A(r^{1-\lambda},r^{-\lambda})}^{(j)}(1), \quad j\geq 0.
\end{equation}
Observe that while on the left-hand side, both the domains and points vary, on the right-hand side only the domains vary. Hence, studying the asymptotic behaviour of the right-hand side is relatively easier and in view of Proposition~\ref{J-j(1)}, it now suffices to analyse the quantities $s_j^{r^{1-\la}, r^{-\la}}$. For simplicity, let us write $\al_n=\al_{n}^{r^{1-\la}, r^{-\la}}$, and $s_j=s_j^{r^{1-\la}, r^{-\la}}$. Note from \eqref{H-norm-z^n} that for $n \geq 0$,
\[
\al_n=2\pi \frac{1+r^{2n+1}}{r^{(2n+1)\la}} \quad \text{and} \quad \al_{-n-1}=2\pi \frac{1+r^{2n+1}}{r^{(2n+1)(1-\la)}}.
\]
Thus, we can write
\begin{equation}\label{phi-j-A_r}
\begin{aligned}
s_j & = \sum_{n=-\infty}^{\infty} \frac{n^j}{\al_n}\\
& = \sum_{n=0}^{\infty}  \left(\frac{n^j}{\al_n} + \frac{(-1)^j(n+1)^j}{\al_{-n-1}}\right)\\
& = \frac{1}{2\pi} \sum_{n=0}^{\infty}\frac{1}{1+r^{2n+1}} \big( n^j r^{(2n+1)\la}+ (-1)^j(n+1)^j r^{(2n+1)(1-\la)}\big)\\
& = u_j +v_j,
\end{aligned}
\end{equation}
where $u_j$ is the $0$th term of the series and $v_j$ is the sum of the terms from $n=1$ onwards. Observe that
\begin{equation}\label{u_j}
u_0=\frac{1}{2\pi} \frac{r^{\la}+r^{1-\la}}{1+r}, \quad u_j=(-1)^j\frac{1}{2\pi} \frac{r^{1-\la}}{1+r}, \quad \text{for } j \geq 1,
\end{equation}
and
\begin{equation}\label{v_j-s_j}
v_j  = \frac{1}{2\pi}\frac{r^{3\la}+(-1)^j2^jr^{3(1-\la)}}{1+r^3}+O(r^{5\la})+O(r^{5(1-\la)}).
\end{equation}
The following lemma that describes the asymptotic behaviour of the maximal domain functions associated with $A_r$ is the key to the proof of Theorem~\ref{variation}. In what follows, for $\phi,\psi:(0,1) \to (0,\infty)$, we will write $\phi(r)\sim\psi(r)$ if for all sufficiently small $\epsilon>0$,
\[
\phi(r)-\psi(r)=\psi(r)o(r^{\epsilon}).
\]

\begin{lem}\label{asym curv}
Let $r,~\lambda\in (0,1)$.  Then
\[
\text{\em (a)}\, r^{\lambda}J_{A_{r}}^{(0)}(r^{\lambda}) = N_{\la}^{(0)}(r),\quad
\text{\em (b)} \, r^{3\lambda}J_{A_{r}}^{(1)}(r^{\lambda})=\frac{N_{\la}^{(1)}(r)}{N_{\la}^{(0)}(r)},\quad
\text{\em (c)} \, r^{5\lambda}J_{A_{r}}^{(2)}(r^{\lambda})=\frac{N_{\la}^{(2)}(r)}{N_{\la}^{(1)}(r)},
\] 
where $N_{\la}^{(j)}(r)$ are functions with
\begin{align*}
N_{\la}^{(0)}(r) & \sim \frac{1}{2\pi} \frac{r^{\lambda}+r^{1-\lambda}}{1+r},\\
N_{\la}^{(1)}(r) & \sim \frac{1}{4\pi^2} \frac{r}{(1+r)^2}+\frac{1}{4\pi^2} \frac{(r^{4\la}+r^{4(1-\la)})+4r(r^{2\la}+r^{2(1-\la)})}{(1+r)(1+r^3)}, \quad \text{and}\\
N_{\la}^{(2)}(r)&\sim \frac{1}{2\pi^3}\frac{r^{9\lambda}+r^{9(1-\lambda)}}{(1+r)(1+r^{3})(1+r^{5})}+\frac{1}{2\pi^3}\frac{r\big(r^{3\lambda}+r^{3(1-\lambda)}\big)}{(1+r)^{2}(1+r^{3})}.
      \end{align*}
\end{lem}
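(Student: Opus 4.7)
The plan is to reduce each $N_\lambda^{(j)}(r)$ to an explicit polynomial in the Hankel moments $s_k = s_k^{r^{1-\lambda}, r^{-\lambda}}$ from \eqref{phi_k} and then expand each $s_k$ asymptotically. Combining the transformation rule \eqref{J_A_r-J_A} with Proposition~\ref{J-j(1)} gives the exact identities
\[
N_\lambda^{(0)} = s_0, \quad N_\lambda^{(1)} = s_0 s_2 - s_1^2, \quad N_\lambda^{(2)} = s_0 s_2 s_4 + 2 s_1 s_2 s_3 - s_1^2 s_4 - s_0 s_3^2 - s_2^3,
\]
the last being the $3 \times 3$ Hankel determinant $\det(s_{i+j})_{i,j=0}^{2}$. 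For each $s_k$ I would use the decomposition $s_k = u_k + v_k$ from \eqref{phi-j-A_r}, with $u_k$ the $n=0$ summand in \eqref{u_j} and $v_k$ the tail $\sum_{n \ge 1}$; for part (c) I would further split $v_k = \tilde v_k + w_k + O(r^{7\lambda}) + O(r^{7(1-\lambda)})$, where $\tilde v_k = \frac{1}{2\pi}\frac{r^{3\lambda} + (-1)^k 2^k r^{3(1-\lambda)}}{1+r^3}$ and $w_k = \frac{1}{2\pi}\frac{2^k r^{5\lambda} + (-1)^k 3^k r^{5(1-\lambda)}}{1+r^5}$ are the $n=1$ and $n=2$ summands.

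Part (a) is then immediate since $v_0/u_0 = O(r^{2\lambda}) + O(r^{2(1-\lambda)}) = o(r^\epsilon)$ for $\epsilon$ sufficiently small. For (b), expand
\[
s_0 s_2 - s_1^2 = (u_0 u_2 - u_1^2) + (u_0 v_2 + u_2 v_0 - 2 u_1 v_1) + (v_0 v_2 - v_1^2).
\]
The first bracket evaluates directly to $r/(4\pi^2(1+r)^2)$ thanks to $u_2 = -u_1$; substituting the leading $v_k \approx \tilde v_k$ into the second bracket and simplifying algebraically (the terms of order $r^{2(1-\lambda)}$ cancel) produces the second summand of $N_\lambda^{(1)}$; the third bracket equals $9 \tilde v_0 \tilde v_2/(\cdots) - \tilde v_1^2/(\cdots) = O(r^3)$ and is absorbed into the error.

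For (c), view $N_\lambda^{(2)}$ as a $3 \times 3$ determinant and expand by multilinearity in rows, splitting $R_k = R_k^U + R_k^V$ (and further $R^V = R^{\tilde V} + R^W + \cdots$ when needed). Several pure contributions vanish by low rank: the $U$-rows for $j \ge 1$ are scalar multiples of one vector; the three $\tilde V$-rows all lie in $\mathrm{span}\bigl((1,1,1),(1,-2,4)\bigr)$; the three $W$-rows all lie in $\mathrm{span}\bigl((1,2,4),(1,-3,9)\bigr)$. The surviving leading contributions are exactly (i) two $U$-rows and one $\tilde V$-row, whose three placements sum (using $u_0 - u_2 = r^\lambda/(2\pi(1+r))$ and the identity $\tilde v_2 + 2\tilde v_3 + \tilde v_4 = 4(P+Q)$) to the second displayed summand of $N_\lambda^{(2)}$, and (ii) one $U$-row, one $\tilde V$-row, and one $W$-row, whose six placements sum analogously to the first summand. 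All remaining mixed contributions---$U\tilde V\tilde V$, $UUW$, $\tilde V\tilde V W$, tails from the $O(r^{7\lambda})$ remainder, etc.---produce terms of orders $r^{3+\lambda}, r^{4-\lambda}, r^{3+5\lambda}, r^{8-5\lambda}, \ldots$, each strictly subordinate to whichever of the two displayed summands dominates $N_\lambda^{(2)}$, uniformly in $\lambda \in (0,1)$.

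The main obstacle is the bookkeeping in (c): verifying the exact coefficients that arise after summing the surviving contributions over row placements (rather than cancelling further), and certifying that each of the many subleading cross-terms is dominated by the envelope $\{r^{9\lambda}, r^{1+3\lambda}, r^{9(1-\lambda)}, r^{4-3\lambda}\}$ uniformly as the dominant exponent rearranges with $\lambda$.
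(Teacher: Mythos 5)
Your proposal is correct and, for parts (a) and (b), follows the paper's route exactly: reduce to the moments $s_k$ via \eqref{J_A_r-J_A} and Proposition~\ref{J-j(1)}, split $s_k=u_k+v_k$ into the $n=0$ term and the tail, compute $I=u_0u_2-u_1^2=\tfrac{1}{4\pi^2}\tfrac{r}{(1+r)^2}$, extract the second summand of $N_\la^{(1)}$ from the cross terms, and absorb $v_0v_2-v_1^2=O(r^3)$ into the error. Where you genuinely add something is part (c): the paper simply asserts that ``a similar but only lengthier calculation'' gives the stated asymptotics of $N_\la^{(2)}$, whereas you organize that calculation by writing $N_\la^{(2)}=\det(s_{i+j})_{i,j=0}^2$, refining the tail to $\tilde v_k+w_k+O(r^{7\la})+O(r^{7(1-\la)})$, and expanding by multilinearity; the rank observations (the $U$-rows for $j\ge 1$ parallel to $(1,-1,1)$, the $\tilde V$-rows in $\mathrm{span}\{(1,1,1),(1,-2,4)\}$, the $W$-rows in $\mathrm{span}\{(1,2,4),(1,-3,9)\}$) correctly kill the pure blocks, and I have checked that the surviving $UU\tilde V$ placements sum via $\tilde v_2+2\tilde v_3+\tilde v_4$ to the second displayed summand and that the $U\tilde V W$ placements reproduce the coefficient $\tfrac{1}{2\pi^3}$ of $r^{9\la}$. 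This buys a verifiable, structured derivation of exactly the step the paper leaves to the reader. Two small points to tidy up: in (b) the cancellation you invoke is among the $r^{4(1-\la)}$ terms (coefficient $4+1-4=1$), not terms of order $r^{2(1-\la)}$, and it is only partial; and in the final domination step you should record that the admissible $\ep$ in $o(r^{\ep})$ depends on $\la$ (e.g.\ $\ep<4\la$ for the $r^{1+7\la}$-type remainders), which is all the lemma's $\sim$ notation requires.
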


\begin{proof} 
(a) By \eqref{J_A_r-J_A} and Proposition~\ref{J-j(1)}, we have
\[
r^{\la} J^{(0)}_{A_r}(r^{\la})=s_0. 
\]
We define $N_{\la}^{(0)}(r)=s_0$. Then from \eqref{phi-j-A_r}--\eqref{v_j-s_j},
\[
N_{\la}^{(0)}(r)=u_0+v_0=\frac{1}{2\pi} \frac{r^{\la}+r^{1-\la}}{1+r}+E(r),
\]
where
\[
E(r)=O\big(r^{3\la}\big)+O\big(r^{3(1-\la)}\big).
\]
Thus, for all sufficiently small $\ep>0$,
\[
E(r)=o\big(r^{2\la+\ep}\big)+o\big(r^{2(1-\la)+\ep}\big).
\]
Therefore, to complete the proof of (a), we only need to show that
\[
\frac{E(r)}{\frac{1}{2\pi} \frac{r^{\la}+r^{1-\la}}{1+r}}=o(r^\ep).
\]
But
\[
\frac{\big\vert o(r^{2\la+\ep})\big\vert }{\frac{1}{2\pi} \frac{r^{\la}+r^{1-\la}}{1+r}r^{\ep}}\leq \frac{2\pi(1+r)\big\vert o(r^{2\la+\ep})\big\vert}{r^{\la+\ep}} \to 0
\]
as $r \to 0$, and by symmetry, the same holds for the $o(r^{2(1-\la)+\ep})$ term.

(b) By \eqref{J_A_r-J_A} and Proposition~\ref{J-j(1)}, we have
\[
r^{3\la}J^{(1)}_{A_r}(r) =J^{(1)}_{A(r^{1-\la},r^{-\la})}(1) = \frac{s_0s_2-s_1^2}{s_0} = \frac{N_{\la}^{(1)}(r)}{N_{\la}^{(0)}(r)},
\]
where $N_{\la}^{(1)}(r)=s_0s_2-s_1^2$. The asymptotic behaviour of $N^{(0)}_{\la}(r)$ is already discussed in (a). For $N_{\la}^{(1)}(r)$, note that by \eqref{phi-j-A_r},
\begin{equation}\label{s0s2-s1^2}
N_{\la}^{(1)}(r) = (u_0u_2-u_1^2)+(u_0v_2+u_2v_0-2u_1v_1)+(v_0v_2-v_1^2)=I+II+III,
\end{equation}
say. From \eqref{u_j} and \eqref{v_j-s_j}, we have
\begin{equation}\label{u-term}
I=\frac{1}{4\pi^2} \frac{r}{(1+r)^2},
\end{equation}
and
\begin{multline}\label{uv-term}
II=  \frac{1}{2\pi} \frac{r^{\la}+r^{1-\la}}{1+r} \cdot \frac{1}{2\pi} \frac{r^{3\la}+4r^{3(1-\la)}}{1+r^3}+\frac{1}{2\pi}\frac{r^{1-\la}}{1+r} \cdot \frac{1}{2\pi} \frac{r^{3\la}+r^{3(1-\la)}}{1+r^3} \\
+2\frac{1}{2\pi}\frac{r^{1-\la}}{1+r} \cdot \frac{1}{2\pi} \frac{r^{3\la}-2r^{3(1-\la)}}{1+r^3}+F_1(r)\\
=  \frac{1}{4\pi^2}\frac{(r^{4\la}+r^{4(1-\la)}) + 4r(r^{2\la}+r^{2(1-\la)})}{(1+r)(1+r^3)}+F_1(r),
\end{multline}
where
\[
F_1(r)=O(r^{4\la+1})+O(r^{4(1-\la)+1})+O(r^{6\la})+O(r^{6(1-\la)}).
\]
Finally,
\begin{equation}\label{v-term}
\begin{aligned}
III & =\frac{1}{2\pi} \frac{r^{3\la}+r^{3(1-\la)}}{1+r^3} \cdot \frac{1}{2\pi} \frac{r^{3\la}+4r^{3(1-\la)}}{1+r^3}-\left(\frac{1}{2\pi} \frac{r^{3\la}-2r^{3(1-\la)}}{1+r^3}\right)^2+F_2(r)\\
& = \frac{1}{4\pi^2} \frac{9 r^3 }{(1+r^3)^2} +F_2(r),
\end{aligned}
\end{equation}
where
\[
F_2(r)=O\big(r^{8\la}\big)+O\big(r^{8(1-\la)}\big)+O\big(r^{3+2(1-\la)}\big) + O\big(r^{3+2\la}\big)+O\big(r^{5}\big).
\]
It follows from \eqref{s0s2-s1^2}--\eqref{v-term} that
\[
N_{\la}^{(1)}(r)=A(r)+F(r),
\]
where
\[
A(r) = \frac{1}{4\pi^2} \frac{r}{(1+r)^2}+\frac{1}{4\pi^2} \frac{(r^{4\la}+r^{4(1-\la)})+4r(r^{2\la}+r^{2(1-\la)})}{(1+r)(1+r^3)},
\]
and
\[
F(r)=o(r^2)+o(r^{4\la + \ep})+o(r^{4(1-\la)+\ep}),
\] 
for all sufficiently small $\ep$.
Thus, to complete the proof of (b), we only need to show that
\[
F(r)/A(r)=o(r^{\ep}).
\]
Note that
\[
\frac{\big\vert o(r^2)\big\vert}{r^{\ep}A(r)}\leq \frac{\big\vert o(r^2)\big\vert}{r^{\ep}\frac{1}{4\pi^2} \frac{r}{(1+r)^2}} \to 0
\]
as $r \to 0$. Similarly,
\[
\frac{\big\vert o(r^{4\la+\ep})\big\vert}{r^{\ep}A(r)} \leq \frac{\big\vert o(r^{4\la+\ep})\big\vert}{r^{\ep}\frac{1}{4\pi^2}\frac{r^{4\la}}{(1+r)(1+r^3)}} \to 0
\]
as $r \to 0$, and by symmetry, the same holds for the $o(r^{4(1-\la)+\ep})$ term.

(c) By \eqref{J_A_r-J_A} and Proposition~\ref{J-j(1)}, we have
\[
r^{5\la} J^{(2)}_{A_r}(r^{\la})=J^{(2)}_{A(r^{1-\la},r^{-\la})}(1) = \frac{-s_1^2s_4 + s_0s_2 s_4 +2 s_1s_2s_3 -s_0s_3^2-s_2^3}{s_0s_2-s_1^2}=\frac{N_{\la}^{(2)}(r)}{N_{\la}^{(1)}(r)},
\]
where 
\[
N_{\la}^{(2)}(r)=-s_1^2s_4 + s_0s_2 s_4 +2 s_1s_2s_3 -s_0s_3^2-s_2^3.
\]
We have already obtained the asymptotic behaviour of $N_{\la}^{(1)}(r)$ in (b). For $N_{\la}^{(2)}(r)$, a similar but only lengthier calculation as above shows that
\[
N_{\la}^{(2)}(r)=B(r)+G(r),
\]
where
\[
B(r)=\frac{1}{2\pi^3}\frac{r^{9\la}+r^{9(1-\la)}}{(1+r)(1+r^{3})(1+r^{5})}+\frac{1}{2\pi^3}\frac{r(r^{3\la}+r^{3(1-\la)})}{(1+r)^{2}(1+r^{3})},
\]
and
\[
G(r)=o\big(r^{10\la+\ep}\big)+o\big(r^{10(1-\la)+\ep}\big)+o\big(r^{1+3\la+\ep}\big)+o\big(r^{1+3(1-\la)+\ep}\big).
\]
Thus, to complete the proof of (c), we only need to check that
\[
\frac{G(r)}{B(r)}=o(r^{\ep}),
\]
which can be done in a similar way as above. This completes the proof of (c) and the lemma.
\end{proof}
We are now ready for the proof of Theorem~\ref{variation}.

\begin{proof}[Proof of Theorem~\ref{variation}]
(a) By \eqref{berg-fuks-formula} and Lemma~\ref{asym curv} (a), we have
\[
c_{A_r}(r^{\la})=2\pi S_{A_r}(r^{\la})=2\pi J^{(0)}_{A_r}(r^{\la})=2\pi \frac{1}{r^{\la}} N_{\la}^{(0)}(r)= \frac{1+r^{1-2\la}}{1+r}\big(1+o(r^{\ep})\big),
\]
from which (a) follows immediately.

\medskip

(b) By \eqref{berg-fuks-formula} and Lemma~\ref{asym curv}, we have
\[
s^2_{A_r}(r^{\la})=\frac{J^{(1)}(r^\la)}{J^{(0)}(r^{\la})} = \frac{1}{r^{2\la}} \frac{N_{\la}^{(1)}(r)}{\big(N_{\la}^{(0)}(r)\big)^2}=
\frac{r+\frac{1+r}{1+r^3}\Big(r^{4\la}+r^{4(1-\la)}+4r\big(r^{2\la}+r^{2(1-\la)}\big)\Big)}{r^{2\la}\big(r^{\la}+r^{1-\la}\big)^2} f(r),
\]
where $f(r)$ is a positive function such that $f(r) \to 1$ as $r \to 0+$. Dividing the numerator and denominator by $r^{4\la}$, we write
\[
s^2_{A_r}(r^{\la})=\frac{r^{1-4\la}+\frac{1+r}{1+r^3}\Big(1+r^{4(1-2\la)}+4\big(r^{1-2\la}+r^{3(1-2\la)}\big)\Big)}{\big(1+r^{1-2\la}\big)^2}f(r),
\]
from which the limiting behaviour on $(0,1/4]$ follows immediately. On the other hand, by dividing the numerator and denominator by $r^2$, we write
\[
s^2_{A_r}(r^{\la})=\frac{\frac{1}{r}+\frac{1+r}{1+r^3}\Big(r^{2(2\la-1)}+r^{2(1-2\la)}+4\big(r^{2\la-1}+r^{1-2\la}\big)\Big)}{\big(r^{2\la-1}+1\big)^2}f(r),
\]
from which the limiting behaviour on $(1/4,1)$ follows. Thus the proof of (b) is complete.

\medskip

(c) First, note that
\[
\kappa_{c_{A_r}}=-\frac{\De \log c_{A_r}}{c^2_{A_r}}=-4\frac{\pa\ov\pa \log c_{A_r}}{c^2_{A_r}}=-\frac{\pa\ov\pa \log S_{A_r}}{\pi^2 S^2_{A_r}}=-\frac{1}{\pi^2}\frac{s^2_{A_r}}{S^2_{A_r}}=-\frac{1}{\pi^2}\frac{J_{A_r}^{(1)}}{\big(J_{A_r}^{(0)}\big)^3},
\]
using \eqref{berg-fuks-formula}. Now, using Lemma~\ref{asym curv}, we have
\[
\kappa_{c_{A_r}}(r^{\la})=-\frac{1}{\pi^2}\frac{N_{\la}^{(1)}(r)}{\big(N_{\la}^{(0)}(r)\big)^4}=-4\frac{r(1+r)^2+\frac{(1+r)^3}{1+r^3}\Big(r^{4\la}+r^{4(1-\la)}+4r\big(r^{2\la}+r^{2(1-\la)}\big)\Big)}{\big(r^{\la}+r^{1-\la}\big)^4} g(r),
\]
where $g(r)$ is a positive function such that $g(r) \to 1$ as $r \to 0+$. Observe that the right-hand side is symmetric in $\la$ and $1-\la$, and hence enough to compute the limiting behaviour on $(0,1/2]$. Dividing the numerator and denominator by $r^{4\la}$, we write
\[
\kappa_{c_{A_r}}(r^{\la})=-4\frac{r^{1-4\la}(1+r)^2+\frac{(1+r)^3}{1+r^3}\Big(1+r^{4(1-2\la)}+4\big(r^{1-2\la}+r^{3(1-2\la)}\big)\Big)}{(1+r^{1-2\la})^2}g(r),
\]
from which the limiting behaviour on $(0,1/2]$ (and hence on $(0,1)$) follows immediately. This completes the proof of (c).

\medskip

(d) Recall from \eqref{berg-fuks-formula} that
\begin{equation}\label{curv-s-A_r}
\kappa_{s_{A_{r}}}(r^{\la})=4-2\frac{J_{A_{r}}^{(0)}(r^{\la})J_{A_{r}}^{(2)}(r^{\la})}{\big(J_{A_{r}}^{(1)}(r^{\la})\big)^{2}}.
\end{equation}
Therefore, using Lemma~\ref{asym curv}, we have
\begin{multline*}
\kappa_{s_{A_{r}}}(r^{\la})=4-2\frac{\big(N_{\la}^{(0)}(r)\big)^3 N_{\la}^{(2)}(r)}{\big(N_{\la}^{(1)}(r)\big)^3}\\
=4-8 \frac{\left(\frac{r^{\la}+r^{1-\la}}{1+r}\right)^3 \left(\frac{r^{9\lambda}+r^{9(1-\lambda)}}{(1+r)(1+r^{3})(1+r^{5})}+\frac{r\big(r^{3\lambda}+r^{3(1-\lambda)}\big)}{(1+r)^{2}(1+r^{3})}\right)}{\left( \frac{r}{(1+r)^2}+ \frac{(r^{4\la}+r^{4(1-\la)})+4r(r^{2\la}+r^{2(1-\la)})}{(1+r)(1+r^3)}\right)^3}h(r),
\end{multline*}
where $h(r)$ is a positive function such that $h(r) \to 1$ as $r \to 0+$. Observe that the right-hand side is symmetric with respect to $\la$ and $1-\la$ and hence it is enough to compute the limiting behaviour on $(0,1/2]$. Moreover, terms like $1+r$ etc. tend to $1$ as $r\to 0+$ and so they can be ignored while computing the limit. In other words, the limit of $\kappa_{s_{A_r}}(r^{\la})$ as $r \to 0+$ is the same as that of
\[
4-8\frac{\big(r^{\la}+r^{1-\la}\big)^3 \Big(r^{9\lambda}+r^{9(1-\lambda) }+r\big(r^{3\la}+r^{3(1-\la)}\big)\Big)}{\Big(r+ \big(r^{4\la}+r^{4(1-\la)}\big)+4r\big(r^{2\la}+r^{2(1-\la)}\big)\Big)^3},
\]
as $r \to 0+$. Now, dividing the numerator and the denominator of the second term by $r^{12 \la}$, the above expression is equal to
\[
4-8\frac{\big(1+r^{1-2\la}\big)^3\big(1+r^{9(1-2\lambda) }+r^{1-6\la}+r^{4(1-3\la)}\big)}{\Big(r^{1-4\la}+ 1+r^{4(1-2\la)}+4\big(r^{1-2\la}+r^{3(1-2\la)}\big)\Big)^3},
\]
from which the limiting behaviour on $(0, 1/4]$ follows. Multiplying the numerator and the denominator of the second term of the above expression by $r^{3(4\la-1)}$, we obtain
\[
4-8\frac{(1+r^{1-2\la})^3\big(r^{3(4\la-1)}+r^{6(1-\lambda) }+ r^{2(3\la-1)}+r\big)}{\Big(1+ r^{4\la-1}+r^{3-4\la}+4\big(r^{2\la}+r^{2(1-\la)}\big)\Big)^3},
\]
from which the limiting behaviour on $(1/4, 1/2]$ follows. As mentioned above, this gives us the limiting behaviour on all of $(0,1)$. This completes the proof of (d) and the theorem.
\end{proof}

\medskip

\noindent \textbf{Acknowledgments:} The authors would like to thank Prachi Mahajan and Kaushal Verma for several fruitful discussions throughout the course of this work. A. Bhatnagar is supported in part by a CSIR-UGC fellowship (Fellowship No. 1003). D. Borah is supported in part by an ANRF grant (Grant No. MTR/2021/000377).

\begin{bibdiv}
\begin{biblist}

\bib{a05}{article}{
    author = {Aboudi, Nabil},
    title = {Geodesics for the capacity metric in doubly connected domains},
   journal = {Complex Var. Theory Appl.},
    volume= {50},
      year = {2005},
    number = {1},
     pages = {7-22},
      DOI = {10.1080/02781070412331327892},
     review={\MR{2114349}}, 
}

\bib{bl14}{article}{
    author = {Barrett, David},
    author = {Lee, Lina},
     title = {On the {S}zeg\H{o} metric},
   journal = {J. Geom. Anal.},
  VOLUME = {24},
      YEAR = {2014},
    NUMBER = {1},
     PAGES = {104-117},
     DOI = {10.1007/s12220-012-9329-x},
       review={\MR{3145917}},
       }

\bib{bl16}{book}{ 
 author={Bell, Steven R.},
   title={The {C}auchy transform, potential theory and conformal mapping},
   edition= {Second},
   publisher={Chapman \& Hall/CRC, Boca Raton, FL},
   date={2016},
   pages={x+72},
   review={\MR{3467031}},
   }

\bib{Bell-2002}{article}{
   author={Bell, Steven R.},
   title={Complexity in complex analysis},
   journal={Adv. Math.},
   volume={172},
   date={2002},
   number={1},
   pages={15--52},
   issn={0001-8708},
   review={\MR{1943900}},
   doi={10.1006/aima.2002.2076},
}

\bib{Ber33}{article}{
   author={Bergman, Stefan},
   title={\"{U}ber die Kernfunktion eines Bereiches und ihr Verhalten am Rande.
   I},
   language={German},
   journal={J. Reine Angew. Math.},
   volume={169},
   date={1933},
   pages={1--42},
   issn={0075-4102},
   review={\MR{1581372}},
   doi={10.1515/crll.1933.169.1},
}
   
\bib{bg70}{book}{
    author = {Bergman, Stefan},
    title = {The kernel function and conformal mapping},
    series = {Mathematical Surveys},
    volume = {No. V},
    publisher = {American Mathematical Society, Providence, RI},
      date = {1970},
     pages = {x+257},
     review={\MR{507701}},
   }

 \bib{bhv18}{article}{
    author = {Borah, Diganta},
    author = {Haridas, Pranav},
    author = {Verma, Kaushal},
     title = {Comments on the {G}reen's function of a planar domain},
   journal = {Anal. Math. Phys.},
 volume = {8},
      year = {2018},
    number = {3},
    pages = {383-414},
     doi = {10.1007/s13324-017-0177-5},
     review={\MR{3842204}},
      }  
      
\bib{bk22}{article}{
     author = {Borah, Diganta},
     author = {Kar, Debaprasanna},
     title = {Some remarks on the {K}obayashi-{F}uks metric on strongly
              pseudoconvex domains},
   journal = {J. Math. Anal. Appl.},
    volume = {512},
     year = {2022},
    number = {2},
    pages = {Paper No. 126162, 24},
      doi = {10.1016/j.jmaa.2022.126162},
       review={\MR{4396033}},
}

\bib{Burbea-77}{article}{
   author={Burbea, Jacob},
   title={The Carath\'eodory metric in plane domains},
   journal={Kodai Math. Sem. Rep.},
   volume={29},
   date={1977},
   number={1-2},
   pages={157--166},
   issn={0023-2599},
   review={\MR{0466530}},
}

\bib{Bur-curv-77}{article}{
   author={Burbea, Jacob},
   title={The curvatures of the analytic capacity},
   journal={J. Math. Soc. Japan},
   volume={29},
   date={1977},
   number={4},
   pages={755--761},
   issn={0025-5645},
   review={\MR{0460624}},
   doi={10.2969/jmsj/02940755},
}
\bib{cl09}{article}{
    author = {Chen, BoYong},
    author = {Lee, HanJin},
     title = {Bergman kernel and complex singularity exponent},
   journal = {Sci. China Ser. A},
   volume = {52},
      year = {2009},
    number = {12},
    pages = {2590-2603},
     review= {\MR{2577175}},
       doi = {10.1007/s11425-009-0033-7},
}

\bib{d10}{article}{
   author = {Dinew, \.{Z}ywomir},
    title = {An example for the holomorphic sectional curvature of the
              {B}ergman metric},
   journal = {Ann. Polon. Math.},
   volume = {98},
      year = {2010},
    number = {2},
     pages = {147-167},
     review={\MR{2640210}},
   doi = {10.4064/ap98-2-4},
       }
\bib{d94}{article}{
    author = {Donnelly, Harold},
     title = {{$L_2$} cohomology of pseudoconvex domains with complete
              {K}\"{a}hler metric},
   journal = {Michigan Math. J.},
  volume = {41},
     year = {1994},
    number = {3},
     pages = {433-442},
      review = {\MR{1297700}},
     doi = {10.1307/mmj/1029005071},
      
}
\bib{df83}{article}{
author= {Harold Donnelly}, 
author= {Charles Fefferman}, 
title= {{$L\sp{2}$}-cohomology and index theorem for the {B}ergman metric},
journal= {Ann. of Math.},
year= {1983},
pages= {593-618},
}

\bib{f74}{article}{
    AUTHOR = {Fefferman, Charles},
     TITLE = {The {B}ergman kernel and biholomorphic mappings of
              pseudoconvex domains},
   JOURNAL = {Invent. Math.},
   VOLUME = {26},
      YEAR = {1974},
     PAGES = {1-65},
    DOI = {10.1007/BF01406845},
       review={\MR{0350069}},

}

\bib{f79}{article}{
   author = {Fefferman, Charles},
    title = {Parabolic invariant theory in complex analysis},
  journal = {Adv. in Math.},
   volume = {31},
     year = {1979},
   number = {2},
    pages = {131-262},
      doi = {10.1016/0001-8708(79)90025-2},
    review={\MR{0526424}},}

\bib{Fu1995}{article}{
   author={Fu, Siqi},
   title={Asymptotic expansions of invariant metrics of strictly
   pseudoconvex domains},
   journal={Canad. Math. Bull.},
   volume={38},
   date={1995},
   number={2},
   pages={196--206},
   issn={0008-4395},
   review={\MR{1335098}},
   doi={10.4153/CMB-1995-028-9},
}

\bib{Fuks}{article}{
author={Fuks, B. A},
title= {\"{U}ber geod\"atische Mannigfaltigkeiten einer bet pseudokonformen Abbil-dungen invarianten Riemannschen Geomtrie},
journal={Mat. Sbornik},
volume={44},
date={1937},
pages={567-594}
}

\bib{gk89}{article}{
    author = {Greene, Robert E.},
    author = {Krantz, Steven G.},
    title = {Deformation of complex structures, estimates for the {$\bar
              \partial $} equation, and stability of the {B}ergman kernel},
   journal = {Adv. in Math.},
   volume = {43},
      year = {1982},
    number = {1},
     pages = {1-86},
     review = {\MR{644667}},
    doi = {10.1016/0001-8708(82)90028-7},
      }     

\bib{han1910}{book}{
author={Hancock, H},
title={Lectures on the theory of elliptic functions},
publisher={New York: John Wiley and
Sons, London: Chapman and Hall, Ltd.},
year={1910},
}

 \bib{h83}{book}{
    author = {Herbort, Gregor},
    title = {\"{U}ber die {G}eod\"{a}tischen der {B}ergmanmetrik},
    SERIES = {Schriftenreihe des Mathematischen Instituts der
              Universit\"{a}t M\"{u}nster, Ser. 2 [Series of the
              Mathematical Institute of the University of M\"{u}nster, Ser.
              2]},
    volume = {26},
publisher = {Universit\"{a}t M\"{u}nster, Mathematisches Institut,
              M\"{u}nster},
     year = {1983},
     pages = {124},
     review={\MR{0697235}},
}

\bib{hg83}{article}{
    author = {Herbort, Gregor},
     title = {On the geodesics of the {B}ergman metric},
   journal = {Math. Ann.},
  volume = {264},
      year = {1983},
    number = {1},
     pages = {39-51},
    review = {\MR{709860}},
  doi = {10.1007/BF01458049},
       }
\bib{hg07}{article}{
author = {Herbort, Gregor},
    title = {An example of a pseudoconvex domain whose holomorphic
              sectional curvature of the {B}ergman metric is unbounded},
   journal = {Ann. Polon. Math.},
   volume = {92},
      year = {2007},
    number = {1},
     pages = {29-39},
     review={\MR{2318508}},
     doi = {10.4064/ap92-1-3},
     
}

\bib{jp13}{book}{
    author = {Jarnicki, Marek},
    author = {Pflug, Peter},
     title = {Invariant distances and metrics in complex analysis},
    series = {De Gruyter Expositions in Mathematics},
    volume = {9},
   edition = {extended},
publisher = {Walter de Gruyter GmbH \& Co. KG, Berlin},
      year = {2013},
     pages = {xviii+861},
     doi = {10.1515/9783110253863},
       review={\MR{3114789}},
}

\bib{k23}{article}{
 author = {Kar, Debaprasanna},
    title = {Existence of geodesic spirals for the {K}obayashi-{F}uks
              metric on planar domains},
   journal = {Complex Anal. Oper. Theory},
   volume = {17},
      year = {2023},
    number = {4},
    pages = {Paper No. 46, 15},
       doi = {10.1007/s11785-023-01355-7},
     review={\MR{4578491}},
}

\bib{k78}{article}{ 
author={Paul F. Klembeck},
title= {K\"{a}hler metrics of negative curvature, the {B}ergmann
              metric near the boundary, and the {K}obayashi metric on smooth
              bounded strictly pseudoconvex sets},
journal= {Indiana Univ. Math. J.},
year= {1978},}

\bib{Krantz}{book}{
   author={Krantz, Steven G.},
   title={Function theory of several complex variables},
   note={Reprint of the 1992 edition},
   publisher={AMS Chelsea Publishing, Providence, RI},
   date={2001},
   pages={xvi+564},
   isbn={0-8218-2724-3},
   review={\MR{1846625}},
   doi={10.1090/chel/340},
}

\bib{k19}{article}{
    author = {Krantz, Steven G.},
     title = {The {F}efferman-{S}zeg\H{o} metric and applications},
   journal = {Complex Var. Elliptic Equ.},
  volume = {64},
      year = {2019},
    number = {6},
     pages = {965-978},
    review = {\MR{3933837}},
  doi = {10.1080/17476933.2018.1489800},
       }
\bib{k21}{article}{
    author = {Krantz, Steven G.},
     title = {On an invariant distance induced by the {S}zeg\H{o} kernel},
   journal = {omplex Anal. Synerg.},
  volume = {7},
      year = {2021},
    number = {3},
     pages = {Paper No. 24, 9},
    review = {\MR{4287343}},
  doi = {10.1007/s40627-021-00085-w},
       }

\bib{LS-2016}{article}{
   author={Lanzani, Loredana},
   author={Stein, Elias M.},
   title={Hardy spaces of holomorphic functions for domains in $\mathbb C^n$
   with minimal smoothness},
   conference={
      title={Harmonic analysis, partial differential equations, complex
      analysis, Banach spaces, and operator theory. Vol. 1},
   },
   book={
      series={Assoc. Women Math. Ser.},
      volume={4},
      publisher={Springer, [Cham]},
   },
   date={2016},
   pages={179--199},
   review={\MR{3627722}},
}

\bib{l71}{article}{
   author = {Lebed\cprime , B. Ja.},
     title = {Estimates of the curvature of a {B}ergman metric that is
              invariant under biholomorphic mappings},
   journal = {Funkcional. Anal. i Prilo\v{z}en.},
    volume = {5},
      year = {1971},
    number = {3},
    pages = {100-101},
      review={\MR{296346}},

}

\bib{m35}{article}{
    author = {Maria, Alfred J.},
     title = {Concerning the equilibrium point of {G}reen's function for an
              annulus},
   journal = {Duke Math. J.},
    volume = {1},
      year = {1935},
    number = {4},
     pages = {491-495},
       doi = {10.1215/S0012-7094-35-00136-3},
       review={\MR{1545894}}
}

\bib{m02}{article}{
  author = {McNeal, Jeffery D.},
   title = {{$L^2$} harmonic forms on some complete {K}\"{a}hler
              manifolds},
journal = {Math. Ann.},
  volume = {323},
      year = {2002},
   number = {2},
     pages = {319-349},
      review = {\MR{1913045}},
   doi = {10.1007/s002080100305},
       
}

\bib{o89}{article}{
  author = {Ohsawa, Takeo},
    title = {On the infinite dimensionality of the middle {$L^2$}
              cohomology of complex domains},
   journal = {Publ. Res. Inst. Math. Sci.},
   volume = {25},
      year = {1989},
    number = {3},
     pages = {499-502},
     doi = {10.2977/prims/1195173354},
     review={\MR{1018512}},
}

\bib{rt4}{article}{
    AUTHOR = {Rodr\'{\i}guez, J. M.},
    AUTHOR = {Tour\'{\i}s, E.},
     TITLE = {Gromov hyperbolicity through decomposition of metric spaces},
   JOURNAL = {Acta Math. Hungar.},
 VOLUME = {103},
      YEAR = {2004},
    NUMBER = {1-2},
     PAGES = {107--138},
    doi = {10.1023/B:AMHU.0000028240.16521.9d},
       review={\MR{2047877}},}

\bib{sv20}{article}{
author={Amar Deep Sarkar and Kaushal Verma},
title={Boundary behaviour of some conformal invariants on planar domains},
journal= {Comput. Methods Funct. Theory},
volume={20},
date={2020},
pages={145–158},
review={\MR{4071867}}, 
}

\bib{s72}{article}{
author = {Suita, Nobuyuki},
     title = {Capacities and kernels on {R}iemann surfaces},
 journal = {Arch. Rational Mech. Anal.},
  volume = {46},
      year = {1972},
     pages = {212--217},
      doi = {10.1007/BF00252460},
      review={\MR{0367181}}, 
       }

\bib{Suita-73-1}{article}{
   author={Suita, Nobuyuki},
   title={On a metric induced by analytic capacity},
   journal={Kodai Math. Sem. Rep.},
   volume={25},
   date={1973},
   pages={215--218},
   issn={0023-2599},
   review={\MR{0318477}},
}

\bib{Teg-Tho}{article}{
   author={Tegtmeyer, Thomas J.},
   author={Thomas, Anthony D.},
   title={The Ahlfors map and Szeg\H{o} kernel for an annulus},
   journal={Rocky Mountain J. Math.},
   volume={29},
   date={1999},
   number={2},
   pages={709--723},
   issn={0035-7596},
   review={\MR{1705484}},
   doi={10.1216/rmjm/1181071660},
}

\bib{WW}{book}{
   author={Whittaker, E. T.},
   author={Watson, G. N.},
   title={A course of modern analysis},
   series={Cambridge Mathematical Library},
   note={An introduction to the general theory of infinite processes and of
   analytic functions; with an account of the principal transcendental
   functions;
   Reprint of the fourth (1927) edition},
   publisher={Cambridge University Press, Cambridge},
   date={1996},
   pages={vi+608},
   isbn={0-521-58807-3},
}

\bib{y76}{article}{
    author = {Yau, Shing Tung},
     title = {Some function-theoretic properties of complete {R}iemannian
              manifold and their applications to geometry},
   journal = {Indiana Univ. Math. J.},
    volume = {25},
      year = {1976},
    number = {7},
     pages = {659-670},
       doi = {10.1512/iumj.1976.25.25051},
       review={\MR{0417452}},
       }
\bib{Zara34}{article}{
   author = {Zarankiewicz, K},
     title = {\"Uber ein numerisches Verfahren zur conformen Abbildung zweifach zusammenh\''angender Gebiete},
   journal = {Z. Angew. Math. Mech.},
 VOLUME = {14},
      year = {1934},
     pages = {97-104},
      }

\bib{z10}{article}{
   author = {Zwonek, W\l odzimierz},
     title = {Asymptotic behavior of the sectional curvature of the {B}ergman metric for annuli},
   journal = {Ann. Polon. Math.},
 VOLUME = {98},
      year = {2010},
    number = {3},
     pages = {291--299},
      doi = {10.4064/ap98-3-8},
       review={\MR{2658116}},}

\end{biblist}
\end{bibdiv}

\end{document}